%% file: AbsSIOV1.tex
\documentclass[11pt]{article}
\usepackage[a4paper,margin=1in]{geometry}
\usepackage[T1]{fontenc}
\usepackage[utf8]{inputenc}
\usepackage{amsmath,amssymb,amsthm,mathtools}

\usepackage[colorlinks]{hyperref}

\newtheorem{theorem}{Theorem}[section]
\newtheorem{lemma}[theorem]{Lemma}
\newtheorem{proposition}[theorem]{Proposition}
\newtheorem{corollary}[theorem]{Corollary}
\theoremstyle{definition}
\newtheorem{definition}[theorem]{Definition}
\newtheorem{remark}[theorem]{Remark}

\title{Coarea Reduction, Sparse Transfer, and Geometric Recomposition for Synchronized Singular Forms}
\author{Vicente Vergara\footnote{Department of Mathematics, Faculty of Physical and Mathematical Sciences, University of Concepci\'on, Concepci\'on, Chile. \texttt{vvergaraa@udec.cl}}}
\date{}

\begin{document}
	\maketitle

\begin{abstract}
	We study truncated bilinear forms associated with synchronized kernels
	\[
	K(x,y)=k(\phi(x),\psi(y)),
	\]
	where the singularity is governed by a one-dimensional kernel $k$, while the geometry is encoded by the phases $\phi$ and $\psi$. The central result of the paper is an architecture of exact reduction, analytic transfer, and geometric recomposition for this class of forms.

	First, we obtain an exact reduction at the level of pushforward measures and weighted pushforward measures in the level variable. Under absolute-continuity hypotheses, this reduction admits an effective realization in the Lebesgue layer, where control of the pushforward densities yields an abstract operator criterion for feeding estimates obtained in the reduced model back into the original problem.

	As a first complete realization of this scheme, we transfer to the synchronized setting a one-dimensional sparse domination principle for singular truncations with Dini-smooth kernels. The final geometric recomposition then separates two regimes: a uniform regime, where global consequences follow from quantitative control of the pushforward densities, and a critical regime, where degeneration of the phases near critical values forces a localized output weighted by pullbacks.
\end{abstract}

\input{00_intro_abs-sio}
\input{10_preli_abs-sio}
\input{20_id_abs-sio}
\input{25_sparse_abs-sio}
\input{26_pv_maximal_capsule}
\input{27_pushforward_lp_bridge}
\input{30_geom_abs-sio}
\input{35_geom_abs-sio}
\input{40_main_thm_abs-sio}
\input{50_examples_abs-sio}

\end{document}

%% file: 00_intro_abs-sio.tex
\section{Introduction}

We study truncated singular bilinear forms on open sets $\Omega_x,\Omega_y\subset\mathbb{R}^n$ of the form
\begin{equation}\label{eq:formabilineal:0}
\langle T_\varepsilon f,g\rangle
=
\iint_{\Omega_x\times\Omega_y}
\mathbf{1}_{\{|\phi(x)-\psi(y)|>\varepsilon\}}
\,K(x,y)\,f(y)\,g(x)\,\mathrm{d}y\,\mathrm{d}x,
\end{equation}
in the case where the kernel is \emph{synchronized} by two real phases $\phi$ and $\psi$, namely
\[
K(x,y)=k(\phi(x),\psi(y)),
\]
for a one-dimensional singular kernel $k$ defined off the diagonal. In this regime, the singularity is governed by the level variable $|\phi(x)-\psi(y)|$, while the geometry of the problem is concentrated on the fibers and level sets of $\phi$ and $\psi$.

The goal of the manuscript is to isolate this structure. We prove that the synchronized truncated form first admits an exact reduction to a one-dimensional singular form in the level variables and that, after applying a sparse input in that reduced layer, the output can be geometrically recomposed in the original space. The main architecture is therefore
\[
\text{exact pushforward reduction}
\quad\Longrightarrow\quad
\text{sparse transfer and geometric recomposition}.
\]
This organization yields two main results: Theorem~\ref{thm:exact-pushforward-reduction}, which establishes the exact reduction, and Theorem~\ref{thm:main-recomposition}, which recomposes the transferred output and separates the uniform and critical regimes.

One-dimensional sparse domination is not used here as an isolated result, but rather as an analytic input within an architecture of reduction and recomposition. The structural point is that, for synchronized kernels, the multidimensional singularity separates exactly from the geometry of the fibers: first one reduces to the pushforward level, then one applies one-dimensional singular analysis, and finally one recomposes the output in terms of the original phases. This recomposition gives rise to two qualitatively different behaviors: a \textit{uniform regime}, with global functional closure, and a \textit{critical regime}, whose natural output is localized and weighted.

This perspective is related to the line initiated by Maz'ya, who showed that certain classes of multidimensional integral equations admit an equivalent reduction to equations for functions of fewer variables through a factorization based on the coarea formula \cite{Mazya2007CoareaIE}. The affinity with that point of view is structural: here as well, reduction to the level variable is the organizing core of the analysis.

The difference lies in the point at which the geometry is introduced. In Maz'ya's strong formulation for two phases, the recomposition already incorporates an explicit geometric restriction between the two families of levels, namely the non-parallelism of the normals on the intersections
\[
\{\phi=\sigma\}\cap\{\psi=\tau\},
\]
or equivalently
\[
\nabla\phi(P)\wedge\nabla\psi(P)\neq 0,
\qquad\text{that is,}\qquad
\sin\omega(P)\neq 0,
\]
where $\omega(P)$ denotes the angle between $\nabla\phi(P)$ and $\nabla\psi(P)$. In that framework, the reduced kernel explicitly contains the factor
\[
\frac{1}{|\nabla\psi(P)|\,|\nabla\phi(P)|\,|\sin\omega(P)|},
\]
so that the transversality between the two phases is built into the strong reduction from the outset.

Our approach separates three operations that should be kept distinct:
\begin{itemize}
	\item the exact reduction of the bilinear form at the level of pushforward measures;
	\item the analytic transfer inside the one-dimensional singular model;
	\item the final geometric recomposition in terms of fibers, critical values, and pushforward densities.
\end{itemize}
The concrete technology used in this first realization is sparse domination for Dini-smooth kernels in one dimension \cite{BallestaCondeAlonso2025DiniSparse}.

The examples in Section~\ref{sec:examples-regimes} show that this separation reflects genuinely distinct geometric and analytic mechanisms. In particular, they distinguish the role of critical values, the geometric uniformity of the fibers, and the possibility of global functional closure.

\subsection*{Main results}

The results of the manuscript are organized around two theorems.

\paragraph{Theorem~\ref{thm:exact-pushforward-reduction}: exact pushforward reduction.}
The first piece of the framework consists in rewriting the synchronized bilinear form
\eqref{eq:formabilineal:0} as a one-dimensional singular form built over the pushforward measures associated with $\phi$ and $\psi$. If $\nu_\phi$ and $\nu_\psi$ denote the corresponding pushforward measures, we introduce
\begin{equation}\label{eq:intro-stage1-Lambda-nu}
\Lambda_\varepsilon^\nu(F,G)
:=
\int_{\mathbb{R}}\int_{\mathbb{R}}
\mathbf{1}_{\{|s-t|>\varepsilon\}}\,k(s,t)\,F(t)\,G(s)
\,\mathrm{d}\nu_\psi(t)\,\mathrm{d}\nu_\phi(s).
\end{equation}
At this intrinsic level, the original data are transferred to the level variable through the relative densities $Q_{\psi,f}$ and $Q_{\phi,g}$ associated with the weighted pushforward measures. Theorem~\ref{thm:exact-pushforward-reduction} establishes the exact identity
\begin{equation}\label{eq:intro-stage1-reduction}
	\langle T_\varepsilon f,g\rangle
	=
	\Lambda_\varepsilon^\nu(Q_{\psi,f},Q_{\phi,g}).
\end{equation}

This identity provides the structural link between the original geometric form and the one-dimensional singular problem. In particular, the truncated form is not merely compared with a one-dimensional model; it is identified exactly with it at the level of pushforward measures. This is the content of Theorem~\ref{thm:exact-pushforward-reduction}: the singular analysis is concentrated in the level variable, while the original geometry is reserved for the later stage of transfer and recomposition.

\paragraph{Lebesgue realization and the pushforward operator bridge.}
When the pushforward measures associated with $\phi$ and $\psi$ are absolutely continuous, Corollary~\ref{cor:reduction-lebesgue} rewrites the previous identity in the form
\begin{equation}\label{eq:intro-stage2-lebesgue}
\langle T_\varepsilon f,g\rangle
=
\int_{\mathbb{R}}\int_{\mathbb{R}}
\mathbf{1}_{\{|s-t|>\varepsilon\}}\,k(s,t)
\,w_{\psi,f}(t)\,w_{\phi,g}(s)
\,\mathrm{d}t\,\mathrm{d}s.
\end{equation}
Here $w_{\psi,f}$ and $w_{\phi,g}$ represent the pushforward densities associated with the data $f$ and $g$ in the level variable. In this effective layer, Section~\ref{sec:pushforward-lp-bridge} reformulates the reduction in terms of the pushforward operator and establishes an abstract \(L^p\)-boundedness criterion under geometric control of the pushforward densities.

\paragraph{One-dimensional analytic module and sparse transfer.}
The next piece develops the one-dimensional analytic module for the hard truncation and formulates its sparse transfer to the truncated geometric operator. Section~\ref{sec:sparse-1d} organizes this step through the smoothed family
\[
T^{1D}_{\varepsilon,\mathrm{sm}}F(s)
=
\int_{\mathbb{R}}k(s,t)\chi(|s-t|/\varepsilon)F(t)\,\mathrm{d}t.
\]
The kernel of this family uniformly satisfies the Calder\'on--Zygmund operator hypotheses with a modulus of continuity $\omega$ satisfying the Dini condition
\[
\int_0^1 \omega(u)\,\frac{\mathrm{d}u}{u}<\infty.
\]
The one-dimensional sparse result of \cite{BallestaCondeAlonso2025DiniSparse} is applied to this family, and the hard--smooth error is then controlled separately in order to return to the hard truncation. The operative output of this piece is the sparse transfer to the truncated geometric operator, formulated in Corollary~\ref{cor:sparse-transfer-Teps}.

\paragraph{Theorem~\ref{thm:main-recomposition}: sparse transfer and geometric recomposition.}
The final piece recomposes the transferred sparse output in terms of the original phases. Theorem~\ref{thm:main-recomposition} starts from the sparse output in the level variable and obtains functional consequences for the truncated geometric family. In its abstract form, if
\[
A_\psi f\in L^r(\mathbb{R}),
\qquad
A_\phi g\in L^{r'}(\mathbb{R}),
\]
then
\[
|\langle T_\varepsilon f,g\rangle|
\le
C_r\,\|A_\psi f\|_{L^r(\mathbb{R})}\,\|A_\phi g\|_{L^{r'}(\mathbb{R})}.
\]
The geometric part of the theorem identifies two ways of closing these norms. In the uniform regime, local control on level intervals is globalized by a finite covering and produces a global unweighted bound. In the critical regime, the pushforward density may degenerate near critical values, and the natural output becomes localized and weighted by pullback.

The coarea representation leads to expressions of the form
\[
w_\theta(t)
=
\int_{\{\theta=t\}}\frac{1}{|\nabla \theta(x)|}\,\mathrm{d}\mathcal{H}^{n-1}(x),
\qquad
w_{\theta,h}(t)
=
\int_{\{\theta=t\}} h(x)\,\frac{1}{|\nabla \theta(x)|}\,\mathrm{d}\mathcal{H}^{n-1}(x),
\]
where $\theta$ denotes either of the phases $\phi$ or $\psi$. These formulas show that recomposition depends on the behavior of the fibers and on the quantitative law by which the phase separates adjacent levels. In analytic classes, such profiles arise naturally through inequalities of \L{}ojasiewicz type \cite{Chill2003,NguyenPham2022}. When this control deteriorates near critical values, the denominator may amplify the pushforward density and force a localized output.

\subsection*{Organization of the manuscript}

Section~\ref{sec:measurable-core} fixes the notation and the preliminary facts used throughout the manuscript. There we introduce the \textbf{(Hk)} package for the one-dimensional kernel, the formulation in terms of pushforward measures, the coarea and disintegration machinery, and the fiber notation that later enters the Lebesgue formulation and the recomposition.

Section~\ref{sec:reduction-1d} contains the first main result, Theorem~\ref{thm:exact-pushforward-reduction}. It first formulates the reduction at the robust level of pushforward measures and then, under additional absolute-continuity hypotheses, obtains the Lebesgue formulation of Corollary~\ref{cor:reduction-lebesgue}.

Section~\ref{sec:sparse-1d} contains the one-dimensional sparse module needed for the hard truncation. It verifies that the reduced smoothed family falls uniformly in the relevant CZO-Dini class, applies the result of \cite{BallestaCondeAlonso2025DiniSparse} to that family, and treats the hard--smooth error in order to recover the hard truncation and transfer the output to the truncated geometric operator through \eqref{eq:theorem-A-lebesgue}.

Section~\ref{sec:pv-maximal-capsule} collects complementary observations on the robustness of truncations. There we separate cutoff independence, the maximal comparison between hard and smooth truncations, and the status of future principal-value assertions.

Section~\ref{sec:pushforward-lp-bridge} isolates the pushforward operator criterion in the Lebesgue layer and converts the effective hypothesis of the sparse transfer into a structural consequence under explicit geometric hypotheses.

Section~\ref{sec:fiber-analytic-objective} begins the geometric recomposition. There the transferred output is rewritten in terms of fiber operators, a first local output regime on level intervals is obtained, and the geometric principle opening the uniform regime is formulated through quantitative non-degeneracy profiles, more flexible than strict uniform submersion.

Section~\ref{sec:fiber-critical-regime} analyzes the critical scenario. There the geometric obstruction to global uniform closure is identified, the partial integrability of the critical profile is recorded, and the corresponding localized weighted output is obtained.

Section~\ref{sec:main-results} contains the second main result, Theorem~\ref{thm:main-recomposition}. This section recomposes the preceding modules into a structural principle of sparse transfer and geometric recomposition, and makes explicit the final dichotomy between a global output in the uniform regime and a localized output in the critical regime.

Section~\ref{sec:examples-regimes} closes the manuscript with examples that distinguish the geometric and analytic mechanisms responsible for the two behaviors. In particular, it separates the role of the critical value, the blow-up of the pushforward density, the geometric uniformity of the fibers, and the possibility of global functional closure.

%% file: 10_preli_abs-sio.tex
\section{Preliminaries}\label{sec:measurable-core}

In this section we fix the notation and the preliminary facts used throughout the manuscript. We collect, on the one hand, the one-dimensional kernel $k$ and its truncations and, on the other hand, the pushforward measures associated with a phase function $\theta$, their densities when they exist, and the corresponding fiber operators. Longer proofs are postponed to later sections.

Associated with the phase $\theta$, we introduce the pushforward measure
\begin{equation}\label{eq:def-nu-theta}
	\nu_\theta:=\theta_\#(\mathrm{d}x\llcorner\Omega),
\end{equation}
that is,
\begin{equation}\label{eq:def-nu-theta-borel}
	\nu_\theta(E)
	=
	\bigl|\{x\in\Omega:\theta(x)\in E\}\bigr|
\end{equation}
for every Borel set $E\subset\mathbb{R}$.

More generally, if $\rho:\Omega\to[0,\infty)$ is measurable with $\rho\in L^1(\Omega)$, we define the weighted pushforward measure
\begin{equation}\label{eq:def-nu-theta-rho}
	\nu_{\theta,\rho}:=\theta_\#(\rho\,\mathrm{d}x\llcorner\Omega),
\end{equation}
namely
\begin{equation}\label{eq:def-nu-theta-rho-borel}
	\nu_{\theta,\rho}(E)
	=
	\int_{\{x\in\Omega:\theta(x)\in E\}}\rho(x)\,\mathrm{d}x
\end{equation}
for every Borel set $E\subset\mathbb{R}$. In the applications we shall mainly use the case $\rho=|f|^r$, with
$f\in L^r(\Omega)$ and $1\le r<\infty$, so that $\rho\in L^1(\Omega)$ and the preceding definition is well posed without additional hypotheses. See, for instance, \cite{EvansGariepy} for a general reference on measure theory, Hausdorff measure, and area/coarea formulae in $\mathbb{R}^n$.

\subsection{One-dimensional kernel, truncations, and analytic package}\label{subsec:kernel-1d}

We fix a measurable kernel
\[
k:\mathbb{R}^2\setminus\{s=t\}\to\mathbb{C}.
\]
Formally, we consider the one-dimensional singular operator
\begin{equation}\label{eq:T1d-formal}
	(T^{1D}f)(s):=\int_{\mathbb{R}} k(s,t)\,f(t)\,\mathrm{d}t,
\end{equation}
interpreted through truncations. Initially, the operators are defined on $L^\infty_c(\mathbb{R})$ and, when appropriate, extended by density to the relevant spaces $L^p(\mathbb{R})$.

We fix a cutoff function $\chi\in C^\infty([0,\infty))$ such that
$0\le \chi\le 1$, $\chi(r)=0$ for $0\le r\le 1$, and $\chi(r)=1$ for $r\ge 2$.

\begin{definition}[Hard and smooth truncations]\label{def:trunc-hard-soft}
	For $\varepsilon>0$ we define
	\begin{align}
		(T^{1D}_\varepsilon f)(s)
		&:=\int_{|s-t|>\varepsilon} k(s,t)\,f(t)\,\mathrm{d}t,\label{eq:T-hard}\\
		(T^{1D}_{\varepsilon,\mathrm{sm}} f)(s)
		&:=\int_{\mathbb{R}} k(s,t)\,\chi\!\left(\frac{|s-t|}{\varepsilon}\right)f(t)\,\mathrm{d}t.\label{eq:T-soft}
	\end{align}
	For the difference between these truncations, we introduce
	\begin{equation}\label{eq:R1d-def}
		R^{1D}_\varepsilon:=T^{1D}_\varepsilon-T^{1D}_{\varepsilon,\mathrm{sm}},
	\end{equation}
	which compares the hard and smooth truncations. We also define the hard maximal truncation by
	\begin{equation}\label{eq:T-star}
		(T^{1D,\ast}f)(s):=\sup_{\varepsilon>0}|(T^{1D}_\varepsilon f)(s)|.
	\end{equation}
\end{definition}

\subsection*{Analytic package for $k$.}
\begin{itemize}
	\item[\textbf{(Hk1)}\label{hyp:k-size}] \textbf{Size of the one-dimensional kernel.} There exists $C_{k1}>0$ such that
	\begin{equation}\label{eq:k-size}
		|k(s,t)|\le \frac{C_{k1}}{|s-t|}
		\qquad\text{for all }s\neq t.
	\end{equation}

	\item[\textbf{(Hk2)}] \textbf{Dini regularity off the diagonal.}
	There exists an increasing modulus of continuity $\omega:[0,1]\to[0,\infty)$, with
	$\omega(0)=0$, such that
	\begin{equation}\label{eq:dini-modulus}
		\int_0^1 \omega(u)\,\frac{\mathrm{d}u}{u}<\infty,
	\end{equation}
	and, moreover, for all $s,s',t\in\mathbb{R}$ with
	$|s-s'|\le \frac12 |s-t|$, one has
	\begin{equation}\label{eq:kernel-smooth-1d-s}
		|k(s,t)-k(s',t)|
		\le
		\omega\!\left(\frac{|s-s'|}{|s-t|}\right)\frac{1}{|s-t|},
	\end{equation}
	while, for all $s,t,t'\in\mathbb{R}$ with
	$|t-t'|\le \frac12 |s-t|$, one has
	\begin{equation}\label{eq:kernel-smooth-1d-t}
		|k(s,t)-k(s,t')|
		\le
		\omega\!\left(\frac{|t-t'|}{|s-t|}\right)\frac{1}{|s-t|}.
	\end{equation}

	\item[\textbf{(Hk3)}\label{hyp:k-L2}] \textbf{Uniform $L^2$ input for smooth truncations.} There exists $C_{k3}>0$ such that
	\begin{equation}\label{eq:k-L2}
		\sup_{\varepsilon>0}\|T^{1D}_{\varepsilon,\mathrm{sm}}\|_{L^2(\mathbb{R})\to L^2(\mathbb{R})}\le C_{k3}.
	\end{equation}
\end{itemize}

We shall write \textbf{(Hk)} for the conjunction of \textbf{(Hk1)},
\textbf{(Hk2)}, and \textbf{(Hk3)}.

\begin{remark}[Origin and dependencies of the \textbf{(Hk)} package]\label{rem:Hk-origin}
	The conditions \textbf{(Hk1)}--\textbf{(Hk3)} lie in the standard framework of Calder\'on--Zygmund-type singular operators with Dini regularity and in the literature on maximal truncations and sparse domination; see, for instance,
	\cite{HytonenRoncalTapiola2015Rough,AndersonHu2020MaxTrunc,CondeAlonsoParcet2016Nondoubling}.
	In the present work, this package is adopted as the initial analytic hypothesis for the later one-dimensional module. Consequently, every constant coming from that module will depend only on $C_{k1}$, on $C_{k3}$, on the Dini functional
	\[
	\int_0^1 \omega(u)\,\frac{\mathrm{d}u}{u},
	\]
	and on the fixed choice of the cutoff $\chi$.
\end{remark}

\subsection{Geometric packages}\label{subsec:hyp-packages}

\noindent\textbf{Geometric packages.}
For $t\in\mathbb R$, we write
\[
\Sigma_t:=\theta^{-1}(t).
\]
\begin{itemize}
	\item[\textbf{(H1)}\label{hyp:H1}] \emph{Quantitative submersion in a level tube.} There exist an open interval $I_0\subset\mathbb{R}$ and an open neighborhood $U\subset\mathbb{R}^n$ with
	\[
	\{x\in\overline\Omega:\theta(x)\in I_0\}\subset U,
	\]
	such that $\theta$ is at least $C^{1,1}$ on $U$ and
	\begin{equation}\label{eq:H1-nondeg}
		|\nabla\theta(x)|\ge c_0>0
		\qquad\text{for all }x\in U.
	\end{equation}
	This package excludes critical values in the tube and allows one to discuss pointwise properties of $w_\theta$ and $w_{\theta,\rho}$ on subintervals of $I_0$, without by itself asserting any additional uniform regularity.

	\item[\textbf{(H2)}\label{hyp:H2}] \emph{Quantitative trivialization of the tube.} There exist an interval $I_0=(a,b)$, a compact $(n-1)$-dimensional manifold $Y$, and a map $F:I_0\times Y\to U\cap\Omega$ of class at least $C^1$ such that: (i) $\theta(F(t,y))=t$ for all $(t,y)\in I_0\times Y$; (ii) for each $t\in I_0$, the map $F_t:=F(t,\cdot)$ parametrizes $\Sigma_t\cap\Omega$; (iii) the tangential Jacobian $J(t,y):=J_{n-1}(D_yF(t,y))$ is uniformly bounded above and below; and (iv) the transverse factor $|\nabla\theta(F(t,y))|^{-1}$ is quantitatively controlled. Then
	\begin{equation}\label{eq:wtheta-rho-chart}
		w_{\theta,\rho}(t)
		=
		\int_Y \rho(F(t,y))\,\frac{J(t,y)}{|\nabla\theta(F(t,y))|}\,\mathrm{d}y,
		\qquad t\in I_0,
	\end{equation}
	which is the basic coordinate representation for obtaining regularity and pointwise bounds under additional hypotheses on $\rho$.

	\item[\textbf{(H3)}\label{hyp:H3}] \emph{Boundary contact and quantitative transversality.} In addition to \textbf{(H1)}, we assume that $\partial\Omega$ has the required geometric regularity in the relevant contact region and that there exists $c_\partial>0$ such that
	\begin{equation}\label{eq:H3-boundary-transversality}
		|\nabla_{\partial\Omega}\theta(x)|\ge c_\partial
		\qquad\text{for }\mathcal H^{n-1}\text{-a.e. }x\in U\cap\partial\Omega.
	\end{equation}
	This package excludes degenerate tangencies between the fibers $\Sigma_t$ and the boundary in the tube under consideration, and is the natural assumption for uniform control of the geometry of $\Sigma_t\cap\Omega$ when the boundary is involved.
\end{itemize}

\begin{remark}[Reading dependencies and scope]\label{rem:density-regularity-scope}
	In the measurable framework fixed at the beginning of this section, we only assert measurable disintegration, existence of pushforward measures, and integrated control of the fiber operators. Pointwise properties of $w_\theta$ and $w_{\theta,\rho}$ begin to be discussed under \textbf{(H1)} and admit a coordinate treatment under \textbf{(H2)}; when the boundary is involved, \textbf{(H3)} must be added. In particular, continuity, Lipschitz regularity, sup/inf bounds, and doubling properties require supplementary hypotheses and are not part of this preliminary framework.
\end{remark}

\subsection{Coarea, disintegration, and pushforward measures}\label{subsec:coarea-disintegration}

In this subsection we fix the notation for the weighted pushforward measures associated with \(\theta\). In addition to the positive measure $\nu_\theta$, we shall need to consider measures weighted by possibly signed or complex test functions.

\begin{equation}\label{eq:coarea}
	\int_\Omega g(x)\,|\nabla\theta(x)|\,\mathrm{d}x
	=
	\int_{\mathbb{R}}
	\left(
	\int_{\Sigma_t} g(x)\,\mathrm{d}\mathcal H^{n-1}(x)
	\right)\mathrm{d}t
\end{equation}
for every nonnegative measurable function $g$.

Applying \eqref{eq:coarea} to
\[
g(x)=\frac{h(x)\,\mathbf 1_{\{|\nabla\theta(x)|>0\}}}{|\nabla\theta(x)|},
\]
one obtains the operational form
\begin{equation}\label{eq:coarea-operational-form}
	\int_\Omega h(x)\,\mathbf 1_{\{|\nabla\theta(x)|>0\}}\,\mathrm{d}x
	=
	\int_{\mathbb{R}}
	\left(
	\int_{\Sigma_t}\frac{h(x)}{|\nabla\theta(x)|}\,\mathbf 1_{\{|\nabla\theta(x)|>0\}}
	\,\mathrm{d}\mathcal H^{n-1}(x)
	\right)\mathrm{d}t,
\end{equation}
whenever the right-hand side is meaningful.

We return to the pushforward measures $\nu_\theta$ and $\nu_{\theta,\rho}$ defined in \eqref{eq:def-nu-theta} and \eqref{eq:def-nu-theta-rho}. Then, for every bounded Borel-measurable function
$\zeta:\mathbb{R}\to\mathbb{C}$ one has
\begin{equation}\label{eq:pushforward-duality}
	\int_{\mathbb{R}} \zeta(t)\,\mathrm{d}\nu_\theta(t)
	=
	\int_\Omega \zeta(\theta(x))\,\mathrm{d}x,
\end{equation}
and, analogously,
\begin{equation}\label{eq:pushforward-identity-rho}
	\int_{\mathbb{R}} \zeta(t)\,\mathrm{d}\nu_{\theta,\rho}(t)
	=
	\int_\Omega \zeta(\theta(x))\,\rho(x)\,\mathrm{d}x.
\end{equation}

These identities are the basic formulation of disintegration along the fibers of $\theta$ and do not, by themselves, require $\nu_\theta$ or $\nu_{\theta,\rho}$ to be absolutely continuous with respect to Lebesgue measure.

Let \(\Omega\subset \mathbb{R}^n\) be measurable, let \(\theta:\Omega\to\mathbb{R}\) be measurable, and let \(h\in L^1(\Omega)\), possibly complex-valued. We define the weighted pushforward measure \(\nu_{\theta,h}\) on \(\mathbb{R}\) by
\begin{equation}\label{eq:def-nu-theta-h}
	\nu_{\theta,h}(E)
	:=
	\int_{\theta^{-1}(E)} h(x)\,\mathrm{d}x,
	\qquad E\subset\mathbb{R}\ \text{Borel}.
\end{equation}
Equivalently, for every bounded Borel function \(F:\mathbb{R}\to\mathbb{C}\),
\begin{equation}\label{eq:def-nu-theta-h-borel}
	\int_{\mathbb{R}} F(t)\,\mathrm{d}\nu_{\theta,h}(t)
	=
	\int_{\Omega} F(\theta(x))\,h(x)\,\mathrm{d}x.
\end{equation}

\begin{lemma}\label{lem:nu-theta-h-ac}
	Let \(h\in L^1(\Omega)\). Then \(\nu_{\theta,h}\) is a finite complex measure on \(\mathbb{R}\) and satisfies
	\begin{equation}\label{eq:nu-theta-h-ac}
		|\nu_{\theta,h}|(E)
		\le
		\int_{\theta^{-1}(E)} |h(x)|\,\mathrm{d}x
	\end{equation}
	for every Borel set \(E\subset\mathbb{R}\). In particular, if \(\nu_\theta(E)=0\), then \(\nu_{\theta,h}(E)=0\), that is,
	\[
	\nu_{\theta,h}\ll \nu_\theta.
	\]
\end{lemma}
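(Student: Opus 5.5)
The plan is to treat $\nu_{\theta,h}$ as the image of the complex measure $\mu_h:=h\,\mathrm{d}x\llcorner\Omega$ under $\theta$, and to read off all three assertions from the corresponding facts for $\mu_h$.

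\emph{Step 1: $\nu_{\theta,h}$ is a finite complex measure.} Since $\theta$ is measurable, $\theta^{-1}(E)$ is a measurable subset of $\Omega$ for every Borel set $E$. Because $h\in L^1(\Omega)$, the integral in \eqref{eq:def-nu-theta-h} is well defined and finite. For countable additivity, I would take disjoint Borel sets $E_j$. Their preimages are then disjoint, so
\[
\mathbf 1_{\theta^{-1}(\bigcup_j E_j)}\,h=\sum_j \mathbf 1_{\theta^{-1}(E_j)}\,h
\]
pointwise. The partial sums are dominated by $|h|\in L^1$, so dominated convergence gives $\nu_{\theta,h}(\bigcup_j E_j)=\sum_j\nu_{\theta,h}(E_j)$. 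Alternatively, one can split $h$ into real and imaginary parts, and each of those into positive and negative parts, and use monotone convergence on each of the four finite positive measures $\nu_{\theta,\rho}$ from \eqref{eq:def-nu-theta-rho}. Finiteness follows from $|\nu_{\theta,h}(E)|\le\|h\|_{L^1(\Omega)}$.

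\emph{Step 2: the total variation bound \eqref{eq:nu-theta-h-ac}.} The main point is to control $|\nu_{\theta,h}|$ itself, not just $|\nu_{\theta,h}(E)|$. I would use the definition
\[
|\nu_{\theta,h}|(E)=\sup\sum_j|\nu_{\theta,h}(E_j)|,
\]
where the supremum runs over finite or countable Borel partitions $\{E_j\}$ of $E$. For each piece,
\[
|\nu_{\theta,h}(E_j)|\le\int_{\theta^{-1}(E_j)}|h|\,\mathrm{d}x.
\]
The sets $\theta^{-1}(E_j)$ partition $\theta^{-1}(E)$, so summing over $j$ gives $\int_{\theta^{-1}(E)}|h|\,\mathrm{d}x$. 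Taking the supremum over partitions yields \eqref{eq:nu-theta-h-ac}. Equivalently, $|\nu_{\theta,h}|\le\nu_{\theta,|h|}$ as measures.

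\emph{Step 3: absolute continuity.} Suppose $\nu_\theta(E)=0$. By \eqref{eq:def-nu-theta-borel}, the set $\theta^{-1}(E)$ has Lebesgue measure zero. Hence $\int_{\theta^{-1}(E)}|h|\,\mathrm{d}x=0$, and Step 2 gives $|\nu_{\theta,h}|(E)=0$, so in particular $\nu_{\theta,h}(E)=0$. This is exactly $\nu_{\theta,h}\ll\nu_\theta$.

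No step here is a real obstacle. The only point needing care is Step 2: one must use the partition definition of the total variation measure rather than just bounding $|\nu_{\theta,h}(E)|$, since the lemma asserts the stronger inequality.
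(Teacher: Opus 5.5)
Your proof is correct and follows the same route as the paper: finiteness from $h\in L^1(\Omega)$, the total-variation bound from the definition of $\nu_{\theta,h}$, and absolute continuity from $|\theta^{-1}(E)|=0$. Your Steps 1 and 2 make explicit the countable additivity and the partition argument for $|\nu_{\theta,h}|$, which the paper compresses into ``follows directly from the definition.''
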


\begin{proof}
	Finiteness follows immediately from \(h\in L^1(\Omega)\), since
	\[
	|\nu_{\theta,h}|(\mathbb{R})
	\le
	\int_\Omega |h(x)|\,\mathrm{d}x
	<
	\infty.
	\]
	The inequality \eqref{eq:nu-theta-h-ac} follows directly from the definition \eqref{eq:def-nu-theta-h}. If \(\nu_\theta(E)=0\), then \(|\theta^{-1}(E)|=0\), and therefore
	\[
	\nu_{\theta,h}(E)
	=
	\int_{\theta^{-1}(E)} h(x)\,\mathrm{d}x
	=
	0.
	\]
\end{proof}

\begin{definition}\label{def:relative-density-pushforward}
	Under the hypotheses of Lemma~\ref{lem:nu-theta-h-ac}, we define the relative pushforward density of \(h\) with respect to \(\theta\) as the Radon--Nikodym derivative
	\[
	Q_{\theta,h}
	:=
	\frac{\mathrm{d}\nu_{\theta,h}}{\mathrm{d}\nu_\theta}.
	\]
\end{definition}

In particular, for every bounded Borel function \(F:\mathbb{R}\to\mathbb{C}\),
\begin{equation}\label{eq:pushforward-relative-density}
	\int_{\mathbb{R}} F(t)\,\mathrm{d}\nu_{\theta,h}(t)
	=
	\int_{\mathbb{R}} F(t)\,Q_{\theta,h}(t)\,\mathrm{d}\nu_\theta(t).
\end{equation}

When in addition \(\nu_\theta\ll \mathrm{d}t\), we write
\[
\mathrm{d}\nu_\theta(t)=w_\theta(t)\,\mathrm{d}t.
\]
If also \(\nu_{\theta,h}\ll \mathrm{d}t\), we write
\[
\mathrm{d}\nu_{\theta,h}(t)=w_{\theta,h}(t)\,\mathrm{d}t.
\]
In that case,
\begin{equation}\label{eq:Q-theta-h-lebesgue}
	Q_{\theta,h}(t)
	=
	\frac{w_{\theta,h}(t)}{w_\theta(t)}
	\qquad
	\text{for \(\nu_\theta\)-a.e. }t\text{ such that }0<w_\theta(t)<\infty.
\end{equation}

In the same absolutely continuous setting, \eqref{eq:pushforward-duality} takes the form
\[
\int_{\mathbb{R}} \zeta(t)\,\mathrm{d}\nu_\theta(t)
=
\int_{\mathbb{R}} \zeta(t)\,w_\theta(t)\,\mathrm{d}t.
\]
Similarly, if
\[
\nu_{\theta,\rho}\ll \mathrm{d}t,
\]
we write
\[
\mathrm{d}\nu_{\theta,\rho}(t)=w_{\theta,\rho}(t)\,\mathrm{d}t.
\]

In the absolutely continuous regime, the coarea formula identifies these densities with fiber integrals: for almost every $t\in\mathbb{R}$,
\begin{equation}\label{eq:wtheta-coarea}
	w_\theta(t)
	=
	\int_{\Sigma_t}\frac{\mathbf 1_{\{|\nabla\theta|>0\}}(x)}{|\nabla\theta(x)|}\,
	\mathrm{d}\mathcal H^{n-1}(x),
\end{equation}
and, more generally,
\begin{equation}\label{eq:wtheta-rho-def}
	w_{\theta,\rho}(t)
	=
	\int_{\Sigma_t}\frac{\rho(x)\,\mathbf 1_{\{|\nabla\theta|>0\}}(x)}{|\nabla\theta(x)|}\,
	\mathrm{d}\mathcal H^{n-1}(x).
\end{equation}

\begin{remark}[Scope of the density formulation]
	The preceding representation by densities is not automatic for an arbitrary Lipschitz map.
	In particular, the pushforward measure $\nu_\theta$ may have a singular part with respect to Lebesgue measure, for instance if $\theta$ is constant on a set of positive measure.
	Thus, the language in terms of $w_\theta$ and $w_{\theta,\rho}$ is always understood either under an additional absolute-continuity hypothesis or in geometric regimes where that property has already been verified.
\end{remark}

These identities form the operational version of disintegration along the fibers of $\theta$. See, for instance, \cite{EvansGariepy,FedererGMT} for classical references on the coarea formula and its consequences in geometric measure theory.

\subsection{Critical values and fiber operators}\label{subsec:critical-values-fiber-operators}

We define the set of critical values of $\theta$ by
\begin{equation}\label{eq:critical-values}
	V_\theta:=\theta\bigl(\{x\in\Omega:\nabla\theta(x)=0\}\bigr).
\end{equation}
In this section, the role of $V_\theta$ is mainly organizational: it separates the minimal regime of measurable disintegration from the regime of quantitative submersion, in which pointwise properties of $w_\theta$ and $w_{\theta,\rho}$ may be discussed away from critical values. We do not impose here any additional hypotheses on the fine structure of $V_\theta$.

The structural object fixed above is the relative pushforward density
\(Q_{\theta,h}\). The present subsection introduces the two formulations that will be used systematically later: the normalized average over fibers and the effective weighted form \(w_\theta\,\widetilde M_\theta h\).

Motivated by \eqref{eq:wtheta-rho-def}, and working in the regime where
$\nu_{\theta,|f|^r}\ll \mathrm{d}t$, for $1\le r<\infty$ and
$f\in L^r(\Omega)$ we define the fiber operator
\begin{equation}\label{eq:Mtheta-def}
	M_\theta f(t)
	:=
	\left(
	\int_{\Sigma_t}\frac{|f(x)|^r\,\mathbf 1_{\{|\nabla\theta|>0\}}(x)}{|\nabla\theta(x)|}\,
	\mathrm{d}\mathcal H^{n-1}(x)
	\right)^{1/r}
	=
	w_{\theta,|f|^r}(t)^{1/r},
\end{equation}
for almost every $t$.

In the same regime, the pushforward identity implies
\begin{equation}\label{eq:Mtheta-Lp}
	\|M_\theta f\|_{L^r(\mathbb{R})}^r
	=
	\int_{\mathbb{R}} w_{\theta,|f|^r}(t)\,\mathrm{d}t
	=
	\int_\Omega |f(x)|^r\,\mathrm{d}x,
\end{equation}
and consequently
\begin{equation}\label{eq:Mtheta-Lp-root}
	\|M_\theta f\|_{L^r(\mathbb{R})}
	=
	\|f\|_{L^r(\Omega)}.
\end{equation}

When in addition $\nu_\theta\ll \mathrm{d}t$, we introduce the normalized average over the fiber
\begin{equation}\label{eq:normalized-average}
	(\widetilde M_\theta f)(t):=
	\begin{cases}
		\displaystyle
		\frac{1}{w_\theta(t)}
		\int_{\Sigma_t}\frac{f(x)}{|\nabla\theta(x)|}\,\mathrm{d}\mathcal H^{n-1}(x),
		& \text{if } 0<w_\theta(t)<\infty,\\[0.8em]
		0, & \text{if } w_\theta(t)=0 \text{ or } w_\theta(t)=\infty.
	\end{cases}
\end{equation}
In particular,
\begin{equation}\label{eq:wtheta-times-Mtilde}
	w_\theta(t)\,\widetilde M_\theta f(t)
	=
	\int_{\Sigma_t}\frac{f(x)}{|\nabla\theta(x)|}\,\mathrm{d}\mathcal H^{n-1}(x)
\end{equation}
for almost every $t$ with $0<w_\theta(t)<\infty$. If $1<r<\infty$ and $r'$ denotes the conjugate exponent of $r$, then H\"older's inequality on the fiber gives
\begin{equation}\label{eq:holder-fiber}
	\left|
	\int_{\Sigma_t}\frac{f(x)}{|\nabla\theta(x)|}\,\mathrm{d}\mathcal H^{n-1}(x)
	\right|
	\le
	w_\theta(t)^{1/r'}\,M_\theta f(t),
\end{equation}
and hence
\begin{equation}\label{eq:holder-fiber-nu}
	|\widetilde M_\theta f(t)|
	\le
	w_\theta(t)^{-1/r}\,M_\theta f(t)
\end{equation}
for almost every $t$ with $0<w_\theta(t)<\infty$.

\begin{remark}[Dictionary between relative density, normalized average, and effective form]
	\label{rem:Q-vs-M}
	Under the hypotheses in which \eqref{eq:Q-theta-h-lebesgue}
	and \eqref{eq:normalized-average} hold, the relative pushforward density
	\(Q_{\theta,h}\) coincides with the normalized average over fibers:
	\[
	Q_{\theta,h}(t)=\widetilde M_\theta h(t)
	\]
	for \(\nu_\theta\)-almost every \(t\) such that \(0<w_\theta(t)<\infty\).
	Consequently,
	\[
	w_{\theta,h}(t)=w_\theta(t)\,\widetilde M_\theta h(t)
	\]
	for almost every \(t\) in the same regime.
\end{remark}

\subsection{Coarea on the boundary}\label{subsec:boundary-coarea}

In this subsection we assume that $\partial\Omega$ is a Lipschitz hypersurface
(for instance $C^1$), so that there exists a unit normal vector
$n(x)$ for $\mathcal H^{n-1}$-almost every $x\in\partial\Omega$. For such $x$ we define the tangential gradient by
\begin{equation}\label{eq:tangential-gradient}
	\nabla_{\partial\Omega}\theta(x)
	:=
	\nabla\theta(x)-(\nabla\theta(x)\cdot n(x))\,n(x).
\end{equation}

\begin{remark}[Boundary transversality convention]\label{rem:boundary-transversality-notation}
	The geometric quantity that measures transversality between the fibers
	$\Sigma_t=\theta^{-1}(t)$ and the boundary $\partial\Omega$ is the norm of the tangential gradient
	\[
	|\nabla_{\partial\Omega}\theta(x)|.
	\]
	When $\partial\Omega$ is a Euclidean hypersurface and $n(x)$ denotes the outward unit normal, we shall occasionally use the equivalent notation
	\[
	|\nabla\theta\wedge n(x)|
	:=
	|\nabla_{\partial\Omega}\theta(x)|
	=
	\bigl(|\nabla\theta(x)|^2-(\nabla\theta(x)\cdot n(x))^2\bigr)^{1/2}.
	\]
	In particular, this quantity equals $|\nabla\theta(x)|\,|\sin\alpha(x)|$,
	where $\alpha(x)$ is the angle between $\nabla\theta(x)$ and the normal $n(x)$.
	Throughout the rest of the manuscript, the main notation will be
	$|\nabla_{\partial\Omega}\theta(x)|$, while the wedge notation will only be used as an abbreviation in concrete examples.
\end{remark}

The coarea formula applied to the restriction
$\theta|_{\partial\Omega}$ implies that, for every nonnegative measurable function
$F:\partial\Omega\to[0,\infty]$,
\begin{equation}\label{eq:coarea-boundary}
	\int_{\partial\Omega} F(x)\,|\nabla_{\partial\Omega}\theta(x)|\,
	\mathrm{d}\mathcal H^{n-1}(x)
	=
	\int_{\mathbb{R}}
	\left(
	\int_{\Sigma_t\cap\partial\Omega} F(x)\,\mathrm{d}\mathcal H^{n-2}(x)
	\right)\mathrm{d}t,
\end{equation}
and, equivalently,
\begin{equation}\label{eq:coarea-boundary-inv}
	\int_{\partial\Omega} F(x)\,\mathrm{d}\mathcal H^{n-1}(x)
	=
	\int_{\mathbb{R}}
	\left(
	\int_{\Sigma_t\cap\partial\Omega}
	\frac{F(x)}{|\nabla_{\partial\Omega}\theta(x)|}\,
	\mathrm{d}\mathcal H^{n-2}(x)
	\right)\mathrm{d}t,
\end{equation}
with the integrals understood as extended integrals. See, for instance,
\cite[Chapter~3]{EvansGariepy} for the coarea formula on rectifiable manifolds.

When the boundary is involved, the hypothesis \textbf{(H3)} introduces precisely a quantitative transversality condition excluding degenerate tangencies between the fibers $\Sigma_t$ and $\partial\Omega$. Under that hypothesis, the identities \eqref{eq:coarea-boundary}--\eqref{eq:coarea-boundary-inv} provide the starting point for the uniform geometric control of the intersections $\Sigma_t\cap\partial\Omega$.

%% file: 20_id_abs-sio.tex
\section{Exact reduction to the one-dimensional model}\label{sec:reduction-1d}

This section proves the first main result of the manuscript: the exact pushforward reduction of the truncated bilinear form on $\Omega_x\times\Omega_y$ to a one-dimensional truncated form on $\mathbb{R}\times\mathbb{R}$. The basic formulation belongs to the robust level of pushforward measures and disintegration fixed in Section~\ref{sec:measurable-core}; under additional absolute-continuity hypotheses, one recovers its realization in the Lebesgue layer.

The point of this section is structural: the synchronized geometric form is not compared with a one-dimensional model, but identified exactly with it, first at the level of pushforward measures and then, when appropriate, in terms of densities. This is the first station in the line of argument that culminates in the sparse recomposition of Section~\ref{sec:main-results}.

We shall use the generic notation $\theta$ to denote either of the two phases, $\phi$ or $\psi$, and write $\Omega_\theta$ for the corresponding domain. Thus, the quantities associated with a phase --- pushforward measure, density, fibers, and fiber operators --- are always understood with this convention.

\subsection{Reduction to the one-dimensional form in the pushforward measure}\label{subsec:reduction-nu}

In this subsection we formulate the exact reduction to the one-dimensional model at the intrinsic level of pushforward measures. The structural object in this regime is the relative density of the weighted pushforward measure with respect to the base pushforward measure, introduced in Definition~\ref{def:relative-density-pushforward}.

For measurable functions $F:\mathbb{R}\to\mathbb{C}$ and $G:\mathbb{R}\to\mathbb{C}$, we define the truncated bilinear form associated with the pushforward measures $\nu_\psi$ and $\nu_\phi$ by
\begin{equation}\label{eq:Lambda-eps-nu}
	\Lambda_\varepsilon^\nu(F,G)
	:=
	\int_{\mathbb{R}}\int_{\mathbb{R}}
	\mathbf{1}_{\{|s-t|>\varepsilon\}}\,k(s,t)\,F(t)\,G(s)\,
	\mathrm{d}\nu_\psi(t)\,\mathrm{d}\nu_\phi(s),
\end{equation}
whenever the integral is absolutely convergent.

\begin{theorem}[Exact pushforward reduction]\label{thm:exact-pushforward-reduction}
	Assume that
	\[
	K(x,y)=k(\phi(x),\psi(y))
	\qquad\text{for almost every }(x,y)\in\Omega_x\times\Omega_y,
	\]
	where $k$ satisfies \textup{(Hk1)}. Fix $\varepsilon>0$, and let
	$f\in L^\infty(\Omega_y)$ and $g\in L^\infty(\Omega_x)$ have compact support in
	$\Omega_y$ and $\Omega_x$, respectively. Then the truncated geometric form admits the exact representation
	\begin{equation}\label{eq:exact-pushforward-reduction}
		\langle T_\varepsilon f,g\rangle
		=
		\Lambda_\varepsilon^\nu(Q_{\psi,f},Q_{\phi,g}).
	\end{equation}
	In particular, the synchronized problem on $\Omega_x\times\Omega_y$ reduces exactly to a one-dimensional singular form in the level variables.

	If, in addition,
	\[
	\nu_\psi\ll \mathrm{d}t,
	\qquad
	\nu_\phi\ll \mathrm{d}s,
	\]
	then the previous identity takes the Lebesgue form
	\begin{equation}\label{eq:exact-pushforward-reduction-lebesgue}
		\langle T_\varepsilon f,g\rangle
		=
		\int_{\mathbb{R}}\int_{\mathbb{R}}
		\mathbf{1}_{\{|s-t|>\varepsilon\}}\,k(s,t)\,
		w_{\psi,f}(t)\,w_{\phi,g}(s)\,
		\mathrm{d}t\,\mathrm{d}s.
	\end{equation}
\end{theorem}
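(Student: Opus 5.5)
The plan is to write the truncated integrand as the pullback of a single function on $\mathbb{R}^2$, apply the pushforward identities once in each variable, and justify the use of Fubini by the truncation and \textup{(Hk1)}.

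\textbf{Step 1: integrability.} On the support of the truncation indicator, \textup{(Hk1)} gives
\[
|\mathbf 1_{\{|\phi(x)-\psi(y)|>\varepsilon\}}\,k(\phi(x),\psi(y))|\le C_{k1}\varepsilon^{-1}.
\]
Since $f,g$ are bounded with compact support, they lie in $L^1$. Hence the integrand of \eqref{eq:formabilineal:0} is bounded by $C_{k1}\varepsilon^{-1}|f(y)||g(x)|\in L^1(\Omega_x\times\Omega_y)$, and Fubini--Tonelli applies. Set
\[
H_\varepsilon(s,t):=\mathbf 1_{\{|s-t|>\varepsilon\}}k(s,t),
\]
which is Borel and bounded by $C_{k1}/\varepsilon$ on $\mathbb{R}^2$ (set to $0$ on the diagonal). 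The integrand then equals $H_\varepsilon(\phi(x),\psi(y))f(y)g(x)$ almost everywhere. We may take $k$ Borel, modifying on a null set if needed; I would remark on this.

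\textbf{Step 2: inner integration in $y$.} Fix $x$. The function $t\mapsto H_\varepsilon(\phi(x),t)$ is bounded Borel, so \eqref{eq:def-nu-theta-h-borel} with $\theta=\psi$, $h=f$ gives
\[
\int_{\Omega_y}H_\varepsilon(\phi(x),\psi(y))f(y)\,\mathrm{d}y=\int_{\mathbb{R}} H_\varepsilon(\phi(x),t)\,\mathrm{d}\nu_{\psi,f}(t).
\]
Lemma~\ref{lem:nu-theta-h-ac} and \eqref{eq:pushforward-relative-density} turn this into
\[
\Phi(\phi(x)),\qquad \Phi(s):=\int_{\mathbb{R}} H_\varepsilon(s,t)\,Q_{\psi,f}(t)\,\mathrm{d}\nu_\psi(t).
\]
By Fubini on $\mathbb{R}\times\mathbb{R}$, $\Phi$ is bounded and Borel, since $|Q_{\psi,f}|\,\mathrm d\nu_\psi=|\nu_{\psi,f}|$ is finite.

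\textbf{Step 3: outer integration in $x$.} Apply \eqref{eq:def-nu-theta-h-borel} with $\theta=\phi$, $h=g$, $F=\Phi$, and then \eqref{eq:pushforward-relative-density}. This gives
\[
\int_{\mathbb{R}}\Phi(s)\,Q_{\phi,g}(s)\,\mathrm{d}\nu_\phi(s),
\]
which is $\Lambda^\nu_\varepsilon(Q_{\psi,f},Q_{\phi,g})$. Absolute convergence holds because
\[
\iint |H_\varepsilon|\,|Q_{\psi,f}|\,|Q_{\phi,g}|\,\mathrm d\nu_\psi\,\mathrm d\nu_\phi\le C_{k1}\varepsilon^{-1}\|f\|_1\|g\|_1 .
\]
Here I use the standard fact $\int|Q_{\theta,h}|\,\mathrm d\nu_\theta=|\nu_{\theta,h}|(\mathbb{R})\le\|h\|_1$, together with \eqref{eq:nu-theta-h-ac}.

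\textbf{Step 4: Lebesgue form.} If $\nu_\psi=w_\psi\,\mathrm dt$, then $\nu_{\psi,f}\ll\nu_\psi\ll\mathrm dt$. Its density is
\[
w_{\psi,f}=Q_{\psi,f}\,w_\psi \quad\text{a.e.},
\]
since both sides integrate bounded Borel test functions identically; likewise for $\phi$. Substituting into the absolutely convergent integral gives \eqref{eq:exact-pushforward-reduction-lebesgue}.

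\textbf{Main obstacle.} The only delicate point is measure-theoretic bookkeeping: Borel measurability of $H_\varepsilon$, and the fact that the a.e.\ identity $K=k(\phi,\psi)$ transfers correctly. The second holds because the modification is on a null set of $\Omega_x\times\Omega_y$. I also need Fubini for the complex measures, which follows from the total-variation bound in Step 3.
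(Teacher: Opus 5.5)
Your proposal is correct and follows the paper's proof essentially step for step. The truncation and \textup{(Hk1)} justify Fubini, the relative-density identity \eqref{eq:pushforward-relative-density} is applied first in $y$ (with $\theta=\psi$, $h=f$) and then in $x$ (with $\theta=\phi$, $h=g$), and the Lebesgue form follows from $w_{\theta,h}=Q_{\theta,h}\,w_\theta$. You also check two points the paper leaves implicit: that the intermediate function $\Phi$ is bounded and Borel, and that $\Lambda^\nu_\varepsilon(Q_{\psi,f},Q_{\phi,g})$ converges absolutely.

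One small caution: simply take $k$ Borel as a standing assumption rather than ``modifying $k$ on a null set.'' A change of $k$ on a Lebesgue-null subset of $\mathbb{R}^2$ can alter $k(\phi(x),\psi(y))$ on a set of positive measure, and also alter $\Lambda^\nu_\varepsilon$, whenever $\nu_\phi\otimes\nu_\psi$ is not absolutely continuous, e.g.\ when one phase is constant on a set of positive measure.
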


\begin{proof}
	Define
	\[
	H(x,y):=
	\mathbf{1}_{\{|\phi(x)-\psi(y)|>\varepsilon\}}\,
	k(\phi(x),\psi(y))\,f(y)\,g(x).
	\]
	The proof has two steps: first we obtain the exact identity at the level of pushforward measures and then, under absolute continuity, we rewrite it in the Lebesgue layer.

	\emph{Step 1: measure-level pushforward identity.}
	On the set
	\[
	\{(x,y)\in\Omega_x\times\Omega_y:\ |\phi(x)-\psi(y)|>\varepsilon\}
	\]
	the hypothesis \textup{(Hk1)} implies
	\[
	|k(\phi(x),\psi(y))|
	\le
	\frac{C_{k1}}{|\phi(x)-\psi(y)|}
	\le
	\frac{C_{k1}}{\varepsilon}.
	\]
	Therefore,
	\[
	|H(x,y)|
	\le
	\frac{C_{k1}}{\varepsilon}\,|f(y)|\,|g(x)|.
	\]
	Since $f$ and $g$ are bounded and compactly supported, the right-hand side belongs to
	$L^1(\Omega_x\times\Omega_y)$. In particular,
	\begin{equation}\label{eq:fubini-justified}
		\int_{\Omega_x}\int_{\Omega_y} |H(x,y)|\,\mathrm{d}y\,\mathrm{d}x<\infty,
		\qquad
		\int_{\Omega_x}\int_{\Omega_y} H\,\mathrm{d}y\,\mathrm{d}x
		=
		\int_{\Omega_y}\int_{\Omega_x} H\,\mathrm{d}x\,\mathrm{d}y.
	\end{equation}

	Fix $x\in\Omega_x$ and define
	\[
	\zeta_x(t)
	:=
	\mathbf{1}_{\{|\phi(x)-t|>\varepsilon\}}\,k(\phi(x),t).
	\]
	Because of the truncation, $\zeta_x$ is Borel measurable and bounded by $C_{k1}/\varepsilon$. Applying \eqref{eq:pushforward-relative-density} with $\theta=\psi$ and $h=f$, we obtain
	\begin{equation}\label{eq:coarea-justified-relative}
		\int_{\Omega_y}
		\mathbf{1}_{\{|\phi(x)-\psi(y)|>\varepsilon\}}\,
		k(\phi(x),\psi(y))\,f(y)\,\mathrm{d}y
		=
		\int_{\mathbb{R}}
		\mathbf{1}_{\{|\phi(x)-t|>\varepsilon\}}\,
		k(\phi(x),t)\,Q_{\psi,f}(t)\,\mathrm{d}\nu_\psi(t).
	\end{equation}
	Substituting this identity into the definition of $\langle T_\varepsilon f,g\rangle$, we get
	\begin{equation}\label{eq:pre-reduction-iterated-relative}
		\langle T_\varepsilon f,g\rangle
		=
		\int_{\Omega_x}
		\left(
		\int_{\mathbb{R}}
		\mathbf{1}_{\{|\phi(x)-t|>\varepsilon\}}\,
		k(\phi(x),t)\,Q_{\psi,f}(t)\,\mathrm{d}\nu_\psi(t)
		\right)
		g(x)\,\mathrm{d}x.
	\end{equation}
	Applying \eqref{eq:pushforward-relative-density} once more, now with $\theta=\phi$ and $h=g$, to the integrand in $x$, we obtain
	\begin{equation}\label{eq:reduction-identity-nu}
		\langle T_\varepsilon f,g\rangle
		=
		\int_{\mathbb{R}}\int_{\mathbb{R}}
		\mathbf{1}_{\{|s-t|>\varepsilon\}}\,k(s,t)\,
		Q_{\psi,f}(t)\,Q_{\phi,g}(s)\,
		\mathrm{d}\nu_\psi(t)\,\mathrm{d}\nu_\phi(s).
	\end{equation}
	By the definition of $\Lambda_\varepsilon^\nu$, this identity is exactly \eqref{eq:exact-pushforward-reduction}.

	\emph{Step 2: Lebesgue realization.}
	Assume now that
	\[
	\nu_\psi\ll \mathrm{d}t,
	\qquad
	\nu_\phi\ll \mathrm{d}s.
	\]
	We write
	\[
	\mathrm{d}\nu_\psi(t)=w_\psi(t)\,\mathrm{d}t,
	\qquad
	\mathrm{d}\nu_\phi(s)=w_\phi(s)\,\mathrm{d}s.
	\]
	Since, by Lemma~\ref{lem:nu-theta-h-ac},
	\[
	\nu_{\psi,f}\ll \nu_\psi,
	\qquad
	\nu_{\phi,g}\ll \nu_\phi,
	\]
	it follows that
	\[
	\nu_{\psi,f}\ll \mathrm{d}t,
	\qquad
	\nu_{\phi,g}\ll \mathrm{d}s.
	\]
	Thus we may write
	\[
	\mathrm{d}\nu_{\psi,f}(t)=w_{\psi,f}(t)\,\mathrm{d}t,
	\qquad
	\mathrm{d}\nu_{\phi,g}(s)=w_{\phi,g}(s)\,\mathrm{d}s.
	\]
	Moreover,
	\[
	\mathrm{d}\nu_{\psi,f}(t)
	=
	Q_{\psi,f}(t)\,\mathrm{d}\nu_\psi(t)
	=
	Q_{\psi,f}(t)\,w_\psi(t)\,\mathrm{d}t
	=
	w_{\psi,f}(t)\,\mathrm{d}t,
	\]
	and, analogously,
	\[
	\mathrm{d}\nu_{\phi,g}(s)
	=
	Q_{\phi,g}(s)\,\mathrm{d}\nu_\phi(s)
	=
	w_{\phi,g}(s)\,\mathrm{d}s.
	\]
	Substituting these identities into \eqref{eq:reduction-identity-nu}, we obtain \eqref{eq:exact-pushforward-reduction-lebesgue}.
\end{proof}

\begin{remark}[Robust level and coarea antecedent]
	The formulation \eqref{eq:reduction-identity-nu} is the robust level of the exact reduction: it involves only pushforward measures, weighted pushforward measures, and relative Radon--Nikodym derivatives with respect to $\nu_\phi$ and $\nu_\psi$. In particular, this layer does not require absolute continuity with respect to Lebesgue measure. The idea of reducing a multidimensional integral equation to a lower-dimensional problem by means of factorization and coarea formulae appears explicitly in Maz'ya's work on integral equations related to the coarea formula \cite{Mazya2007CoareaIE}; here it is used in a bilinear and truncated form, adapted to the synchronized framework of the manuscript.
\end{remark}

\subsection{Lebesgue formulation}\label{subsec:leb-formulation}

In this subsection we isolate the Lebesgue consequence of the exact reduction. Although this formulation already appears in Theorem~\ref{thm:exact-pushforward-reduction}, it is useful to record it as a corollary because it is the version used later to connect with the one-dimensional sparse input.

\begin{corollary}\label{cor:reduction-lebesgue}
	Under the hypotheses of Theorem~\ref{thm:exact-pushforward-reduction}, and assuming in addition that
	\[
	\nu_\psi\ll \mathrm{d}t,
	\qquad
	\nu_\phi\ll \mathrm{d}s,
	\]
	one has
	\begin{equation}\label{eq:theorem-A-lebesgue}
		\langle T_\varepsilon f,g\rangle
		=
		\int_{\mathbb{R}}\int_{\mathbb{R}}
		\mathbf{1}_{\{|s-t|>\varepsilon\}}\,k(s,t)\,
		w_{\psi,f}(t)\,w_{\phi,g}(s)\,
		\mathrm{d}t\,\mathrm{d}s.
	\end{equation}
\end{corollary}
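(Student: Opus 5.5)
The corollary is essentially Step 2 of the proof of Theorem~\ref{thm:exact-pushforward-reduction}, so the plan is to start from the measure-level identity \eqref{eq:reduction-identity-nu}, already established under the stated hypotheses, and convert each pushforward integral into a Lebesgue integral. No new analytic input is needed beyond (Hk1), Lemma~\ref{lem:nu-theta-h-ac}, and the chain rule for Radon--Nikodym derivatives.

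\textbf{Step 1: the densities exist.} Since $f\in L^\infty_c(\Omega_y)\subset L^1(\Omega_y)$ and likewise for $g$, Lemma~\ref{lem:nu-theta-h-ac} gives $\nu_{\psi,f}\ll\nu_\psi$ and $\nu_{\phi,g}\ll\nu_\phi$. Combined with the added hypotheses $\nu_\psi\ll\mathrm{d}t$ and $\nu_\phi\ll\mathrm{d}s$, transitivity of absolute continuity gives $\nu_{\psi,f}\ll\mathrm{d}t$ and $\nu_{\phi,g}\ll\mathrm{d}s$. Hence the densities $w_\psi$, $w_\phi$, $w_{\psi,f}$, $w_{\phi,g}$ all exist in $L^1(\mathbb{R})$, and the chain rule gives, Lebesgue-a.e.,
\[
w_{\psi,f}=Q_{\psi,f}\,w_\psi,
\qquad
w_{\phi,g}=Q_{\phi,g}\,w_\phi.
\]

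\textbf{Step 2: substitute into the iterated integral.} The kernel $\zeta(s,t)=\mathbf 1_{\{|s-t|>\varepsilon\}}k(s,t)$ is bounded by $C_{k1}/\varepsilon$ by (Hk1). Moreover
\[
\int|Q_{\psi,f}|\,\mathrm{d}\nu_\psi=|\nu_{\psi,f}|(\mathbb{R})\le\|f\|_{L^1},
\]
and similarly for $g$. So the double integral in \eqref{eq:reduction-identity-nu} converges absolutely with respect to the product measure $|Q_{\psi,f}|\,\nu_\psi\otimes|Q_{\phi,g}|\,\nu_\phi$. We then replace $Q_{\psi,f}(t)\,\mathrm{d}\nu_\psi(t)$ by $w_{\psi,f}(t)\,\mathrm{d}t$ in the inner integral for each fixed $s$, and afterwards $Q_{\phi,g}(s)\,\mathrm{d}\nu_\phi(s)$ by $w_{\phi,g}(s)\,\mathrm{d}s$ in the outer one. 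Fubini--Tonelli, justified by the bound $(C_{k1}/\varepsilon)\,|w_{\psi,f}(t)|\,|w_{\phi,g}(s)|\in L^1(\mathbb{R}^2)$, makes both substitutions legitimate and yields \eqref{eq:theorem-A-lebesgue}.

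\textbf{Main obstacle.} The only delicate point is measurability and integrability of the $s$-dependent inner integral, so that the substitution can be performed inside the outer integral. This is handled by the uniform bound $C_{k1}/\varepsilon$ together with the Borel measurability of $k$ off the diagonal, exactly as in Step~1 of the theorem's proof. Everything else is bookkeeping with Radon--Nikodym derivatives.
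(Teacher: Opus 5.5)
Your proposal is correct and follows the paper's route: the paper proves the corollary by pointing to Step~2 of the proof of Theorem~\ref{thm:exact-pushforward-reduction}, which, like you, uses Lemma~\ref{lem:nu-theta-h-ac} to get $\nu_{\psi,f}\ll\nu_\psi\ll\mathrm{d}t$ and then substitutes $\mathrm{d}\nu_{\psi,f}=Q_{\psi,f}\,\mathrm{d}\nu_\psi=w_{\psi,f}\,\mathrm{d}t$ (and its analogue for $\phi,g$) into the measure-level identity \eqref{eq:reduction-identity-nu}; your Fubini bookkeeping simply makes explicit what the paper leaves implicit. One small slip: $w_\psi,w_\phi$ need not lie in $L^1(\mathbb{R})$ when $\Omega_y,\Omega_x$ have infinite measure, but you only use $w_{\psi,f},w_{\phi,g}\in L^1(\mathbb{R})$, and this does hold because their $L^1$ norms are at most $\|f\|_{L^1(\Omega_y)}$ and $\|g\|_{L^1(\Omega_x)}$.
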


\begin{proof}
	This is precisely the Lebesgue realization \eqref{eq:exact-pushforward-reduction-lebesgue} of Theorem~\ref{thm:exact-pushforward-reduction}.
\end{proof}

\begin{remark}[On the absolute-continuity hypothesis]
	The supplementary absolute-continuity hypothesis in the preceding corollary is not part of the robust level of the reduction, but rather of its Lebesgue rewriting. In the geometric submersion regimes covered by \textbf{(H1)} of Section~\ref{sec:measurable-core}, this absolute continuity holds locally on the corresponding level interval, and the densities $w_\psi$ and $w_\phi$ are given by the coarea formula.
\end{remark}

%% file: 25_sparse_abs-sio.tex
\section{Sparse domination in one dimension}\label{sec:sparse-1d}

This section provides the one-dimensional sparse input used in Theorem~\ref{thm:main-recomposition}. It is applied after the exact reduction of Theorem~\ref{thm:exact-pushforward-reduction} and produces the transferred output for the geometric form in the effective regime. The architecture is deliberately bipartite. On the one hand, the smoothed family of truncations
\[
(T^{1D}_{\varepsilon,\mathrm{sm}}F)(s)
=
\int_{\mathbb{R}} k(s,t)\,\chi\!\left(\frac{|s-t|}{\varepsilon}\right)F(t)\,\mathrm{d}t
\]
is inserted into the framework of Calder\'on--Zygmund operators with Dini regularity and treated through a bibliographical sparse-dual domination result. On the other hand, the difference between the hard truncation and the smooth truncation is controlled directly by the Hardy--Littlewood maximal operator and then reabsorbed into sparse form. In particular, this section does not reopen either the coarea reduction or the maximal-truncation and principal-value capsule reserved for Section~\ref{sec:pv-maximal-capsule}.

\subsection{Sparse families and sparse forms in \texorpdfstring{$\mathbb{R}$}{R}}\label{subsec:sparse-families-1d}

\begin{definition}[$\eta$-sparse]\label{def:sparse-eta}
Let $0<\eta<1$. We say that a finite or countable family $\mathcal{S}$ of intervals in $\mathbb{R}$ is $\eta$-sparse if for each $I\in\mathcal{S}$ there exists a measurable set $E_I\subset I$ such that the sets $\{E_I\}_{I\in\mathcal{S}}$ are pairwise disjoint and
\[
|E_I|\ge \eta |I|
\qquad 	\text{for every }I\in\mathcal{S}.
\]
\end{definition}

\begin{definition}[Lebesgue sparse form]\label{def:sparse-form-leb}
Let $\mathcal{S}$ be an $\eta$-sparse family of intervals in $\mathbb{R}$. For locally integrable functions $F,G$ we define
\begin{equation*}\label{eq:sparse-form-leb}
\Lambda_{\mathcal{S}}(F,G)
:=
\sum_{I\in\mathcal{S}}
\langle |F| \rangle_I
\langle |G| \rangle_I
|I|,
\end{equation*}
where
\[
\langle H\rangle_I
:=
\frac{1}{|I|}\int_I H(t)\,\mathrm{d}t.
\]

\end{definition}

\begin{definition}[Hardy--Littlewood maximal operator]\label{def:hardy-littlewood-maximal}
For every locally integrable function $H$ on $\mathbb{R}$ we define
\begin{equation*}\label{eq:hardy-littlewood-maximal}
M H(x)
:=
\sup_{I\ni x}\frac{1}{|I|}\int_I |H(t)|\,\mathrm{d}t,
\end{equation*}
where the supremum is taken over all intervals $I\subset\mathbb{R}$ containing $x$.
\end{definition}

\begin{lemma}[$L^r\times L^{r'}$ bound for sparse forms]\label{lem:sparse-form-Lp}
Let $\mathcal{S}$ be an $\eta$-sparse family of intervals in $\mathbb{R}$, with $0<\eta<1$. Then, for every $1<r<\infty$ and every pair of locally integrable functions $F,G$,
\begin{equation}\label{eq:sparse-form-Lp}
\Lambda_{\mathcal{S}}(F,G)
\lesssim_{r,\eta}
\|F\|_{L^r(\mathbb{R})}\,\|G\|_{L^{r'}(\mathbb{R})},
\qquad \frac1r+\frac1{r'}=1.
\end{equation}
\end{lemma}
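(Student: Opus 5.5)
The plan is the standard maximal-function argument, using the disjoint sets $E_I$ to pass from the sparse sum to an integral. For each $I\in\mathcal{S}$ and each $x\in I$, the definition of $M$ gives
\[
\langle |F|\rangle_I\le MF(x),\qquad \langle |G|\rangle_I\le MG(x).
\]
We may assume $\|F\|_{L^r}$ and $\|G\|_{L^{r'}}$ are finite, since otherwise there is nothing to prove.

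The first step uses $|I|\le \eta^{-1}|E_I|$ and $E_I\subset I$ to bound each term:
\[
\langle |F|\rangle_I\langle |G|\rangle_I|I|
\le
\eta^{-1}\langle |F|\rangle_I\langle |G|\rangle_I|E_I|
\le
\eta^{-1}\int_{E_I} MF(x)\,MG(x)\,\mathrm{d}x.
\]

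Next, sum over $I\in\mathcal{S}$. The sets $E_I$ are pairwise disjoint and the integrand is nonnegative. Monotone convergence therefore applies even for countable families, and the sum is at most
\[
\eta^{-1}\int_{\mathbb{R}} MF\,MG\,\mathrm{d}x .
\]

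Finally, apply Hölder's inequality with exponents $r,r'$, and then the Hardy--Littlewood maximal theorem on $L^r(\mathbb{R})$ and $L^{r'}(\mathbb{R})$. Both exponents lie in $(1,\infty)$, so this step uses $1<r<\infty$. The result is
\[
\Lambda_{\mathcal{S}}(F,G)\le \eta^{-1}\|M\|_{L^r\to L^r}\|M\|_{L^{r'}\to L^{r'}}\|F\|_{L^r}\|G\|_{L^{r'}},
\]
which is \eqref{eq:sparse-form-Lp} with a constant depending only on $r$ and $\eta$.

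No step is a genuine obstacle. The only points needing care are the measurability of $MF$, which holds because $MF$ is lower semicontinuous, and the fact that the maximal theorem is invoked with its standard constant. That theorem is quoted as classical and is not reproven here.
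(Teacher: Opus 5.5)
Your proposal is correct and follows the paper's proof: bound both averages by the maximal function on $E_I$, use $|I|\le\eta^{-1}|E_I|$ and the disjointness of the $E_I$ to reach $\eta^{-1}\int_{\mathbb{R}} MF\,MG$, then apply H\"older and the Hardy--Littlewood maximal theorem. Your extra remarks on monotone convergence for countable families and on the measurability of $MF$ are harmless refinements of the same argument.
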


\begin{proof}
For each \(I\in\mathcal{S}\) and almost every \(x\in E_I\subset I\), one has
\[
\langle |F| \rangle_I
\le
M(|F|)(x),
\qquad
\langle |G| \rangle_I
\le
M(|G|)(x),
\]
because the interval \(I\) appears in the definition of the Hardy--Littlewood maximal operator evaluated at \(x\). Since also \(|E_I|\ge \eta |I|\), we obtain
\[
\langle |F| \rangle_I
\langle |G| \rangle_I
|I|
\le
\eta^{-1}
\int_{E_I} M(|F|)(x)\,M(|G|)(x)\,\mathrm{d}x.
\]
Summing over \(I\in\mathcal{S}\) and using that the sets \(E_I\) are pairwise disjoint, we get
\[
\Lambda_{\mathcal{S}}(F,G)
\le
\eta^{-1}
\int_{\mathbb{R}} M(|F|)(x)\,M(|G|)(x)\,\mathrm{d}x.
\]
By H\"older's inequality and the boundedness of \(M\) on \(L^r(\mathbb{R})\) and \(L^{r'}(\mathbb{R})\), we conclude that
\[
\Lambda_{\mathcal{S}}(F,G)
\lesssim_{r,\eta}
\|M(|F|)\|_{L^r(\mathbb{R})}\,
\|M(|G|)\|_{L^{r'}(\mathbb{R})}
\lesssim_{r,\eta}
\|F\|_{L^r(\mathbb{R})}\,
\|G\|_{L^{r'}(\mathbb{R})}.
\]
This proves \eqref{eq:sparse-form-Lp}.
\end{proof}

\subsection{The one-dimensional truncated form}\label{subsec:1d-truncated-form}

By Corollary~\ref{cor:reduction-lebesgue}, the truncated geometric form is rewritten in terms of a one-dimensional bilinear form. We therefore fix as the central object of this section the form
\begin{equation}\label{eq:Lambda-eps-leb}
\Lambda_\varepsilon(F,G)
:=
\iint_{|t-s|>\varepsilon}
k(s,t)\,F(t)\,G(s)\,\mathrm{d}t\,\mathrm{d}s,
\end{equation}
initially defined for bounded compactly supported functions on $\mathbb{R}$. Equivalently,
\[
\Lambda_\varepsilon(F,G)
=
\int_{\mathbb{R}} (T^{1D}_\varepsilon F)(s)\,G(s)\,\mathrm{d}s,
\]
with $T^{1D}_\varepsilon$ given by \eqref{eq:T-hard}. Throughout this section we assume that the kernel $k$ satisfies the package \textbf{(Hk)} introduced in Subsection~\ref{subsec:kernel-1d}. In particular, we may use directly the size estimate \eqref{eq:k-size}, the Dini regularity \eqref{eq:kernel-smooth-1d-s}--\eqref{eq:kernel-smooth-1d-t}, and the uniform $L^2$ input for smooth truncations \eqref{eq:k-L2}.

\subsection{Sparse domination for the truncated form}\label{subsec:sparse-1d-main}

We first introduce the bibliographical black box that will be used for the smoothed family. The point to be verified in our context is that the family of kernels
\[
K_{\varepsilon,\mathrm{sm}}(s,t)
:=
k(s,t)\,\chi\!\left(\frac{|s-t|}{\varepsilon}\right),
\qquad s\neq t,
\]
inherits uniformly in $\varepsilon$ the size, Dini regularity, and $L^2$ boundedness required by the result of Ballesta--Yag\"ue--Conde--Alonso \cite{BallestaCondeAlonso2025DiniSparse}.

\begin{proposition}\label{prop:verify-sparse-black-box-1d}
Assume that the kernel $k$ satisfies \textbf{(Hk)}, and let $K_{\varepsilon,\mathrm{sm}}$ be the smoothed kernel above. Then the family $\{K_{\varepsilon,\mathrm{sm}}\}_{\varepsilon>0}$ satisfies, uniformly in $\varepsilon$, the following properties:
\begin{enumerate}
\item integral representation off the diagonal for $T^{1D}_{\varepsilon,\mathrm{sm}}$;
\item the size estimate
\begin{equation}\label{eq:kernel-size-soft-1d}
|K_{\varepsilon,\mathrm{sm}}(s,t)|
\lesssim
\frac{1}{|s-t|};
\end{equation}
\item a uniform Dini regularity condition in the first variable,
\begin{equation}\label{eq:kernel-dini-soft-1d}
|K_{\varepsilon,\mathrm{sm}}(s,t)-K_{\varepsilon,\mathrm{sm}}(s',t)|
\lesssim
\frac{\widetilde\omega\!\left(\frac{|s-s'|}{|s-t|}\right)}{|s-t|}
\end{equation}
whenever $2|s-s'|\le |s-t|$, where
\[
\widetilde\omega(u):=\omega(u)+u,
\qquad 0\le u\le 1,
\]
and, symmetrically, an analogous condition in the transposed variable,
\begin{equation}\label{eq:kernel-dini-soft-1d-transpose}
|K_{\varepsilon,\mathrm{sm}}(s,t)-K_{\varepsilon,\mathrm{sm}}(s,t')|
\lesssim
\frac{\widetilde\omega\!\left(\frac{|t-t'|}{|s-t|}\right)}{|s-t|}
\end{equation}
whenever $2|t-t'|\le |s-t|$;
\item the uniform boundedness
\begin{equation}\label{eq:L2-soft-uniform-1d}
\|T^{1D}_{\varepsilon,\mathrm{sm}}F\|_{L^2(\mathbb{R})}
\lesssim
\|F\|_{L^2(\mathbb{R})}.
\end{equation}
\end{enumerate}
Consequently, the family $\{T^{1D}_{\varepsilon,\mathrm{sm}}\}_{\varepsilon>0}$ enters uniformly the class of Calder\'on--Zygmund operators with Dini-smooth kernel to which Theorem~A of \cite{BallestaCondeAlonso2025DiniSparse} applies.
\end{proposition}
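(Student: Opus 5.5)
The plan is to verify the four items one by one, each directly from \textbf{(Hk)} and the properties of the cutoff $\chi$. Only the regularity item needs a real argument.

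\emph{Items 1, 2 and 4.} Item 1 is immediate. For $s\notin\operatorname{supp}F$ with $F\in L^\infty_c(\mathbb{R})$, the definition \eqref{eq:T-soft} already expresses $T^{1D}_{\varepsilon,\mathrm{sm}}F(s)$ as an absolutely convergent integral against $K_{\varepsilon,\mathrm{sm}}(s,\cdot)$. Convergence holds because $\chi(|s-t|/\varepsilon)$ vanishes for $|s-t|\le\varepsilon$, and \textbf{(Hk1)} then gives the integrable bound $C_{k1}/\varepsilon$ on a compact set. Item 2 follows from $0\le\chi\le1$ and \eqref{eq:k-size}, with constant $C_{k1}$. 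Item 4 is exactly \textbf{(Hk3)}, i.e.\ \eqref{eq:k-L2}, with constant $C_{k3}$.

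\emph{Item 3, the splitting.} Fix $s,s',t$ with $2|s-s'|\le|s-t|$ and write
\[
K_{\varepsilon,\mathrm{sm}}(s,t)-K_{\varepsilon,\mathrm{sm}}(s',t)
=\bigl(k(s,t)-k(s',t)\bigr)\chi\!\left(\tfrac{|s-t|}{\varepsilon}\right)
+k(s',t)\Bigl(\chi\!\left(\tfrac{|s-t|}{\varepsilon}\right)-\chi\!\left(\tfrac{|s'-t|}{\varepsilon}\right)\Bigr).
\]

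\emph{First term.} It is bounded by \eqref{eq:kernel-smooth-1d-s}, giving $\omega(|s-s'|/|s-t|)/|s-t|$. Here we use $\chi\le1$ and that $\chi$ vanishes near $0$, so the term is zero when $|s-t|\le\varepsilon$; on the diagonal, where $k$ is undefined, both terms vanish.

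\emph{Second term, the main step.} Since $|s'-t|\ge\tfrac12|s-t|$, \eqref{eq:k-size} gives $|k(s',t)|\le 2C_{k1}/|s-t|$. The difference of cutoffs is bounded by the mean value theorem: it is at most $\|\chi'\|_\infty\,\bigl||s-t|-|s'-t|\bigr|/\varepsilon\le\|\chi'\|_\infty|s-s'|/\varepsilon$. This difference is nonzero only if $\min(|s-t|,|s'-t|)\le 2\varepsilon$ and $\max(|s-t|,|s'-t|)>\varepsilon$. Because $|s'-t|\le\tfrac32|s-t|$ and $|s'-t|\ge\tfrac12|s-t|$, this forces $|s-t|\le4\varepsilon$, and it is this localization that makes the bound uniform in $\varepsilon$. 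On the remaining range $|s-t|\le 4\varepsilon$,
\[
\frac{|s-s'|}{\varepsilon}\le 4\frac{|s-s'|}{|s-t|}.
\]
The second term is therefore at most $8C_{k1}\|\chi'\|_\infty\,\frac{|s-s'|}{|s-t|}\cdot\frac{1}{|s-t|}$. This is the origin of the extra linear piece $u$ in $\widetilde\omega(u)=\omega(u)+u$. The transposed estimate \eqref{eq:kernel-dini-soft-1d-transpose} follows symmetrically from \eqref{eq:kernel-smooth-1d-t}. Finally, $\widetilde\omega$ is increasing, vanishes at $0$, and has Dini integral $\int_0^1\omega(u)\,\mathrm{d}u/u+1<\infty$.

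\emph{Conclusion.} All constants depend only on $C_{k1}$, $C_{k3}$, the Dini functional and $\chi$, consistent with Remark~\ref{rem:Hk-origin}. This places the smoothed family uniformly in the hypotheses of Theorem~A of \cite{BallestaCondeAlonso2025DiniSparse}. The only delicate point is the localization $|s-t|\lesssim\varepsilon$ in the cutoff difference, needed to convert the factor $1/\varepsilon$ into $1/|s-t|$.
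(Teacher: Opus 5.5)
Your proposal is correct and follows the paper's proof: the same splitting of the kernel difference, (Hk2) for the first term, and the mean value theorem plus the localization $|s-t|\lesssim\varepsilon$ of the cutoff difference for the second. Items 1, 2 and 4 are read off from the definition, (Hk1) and (Hk3), as in the paper. Your version is somewhat sharper, since you make the constants explicit and observe that only the one-sided bound $|s-t|\le 4\varepsilon$ is needed, rather than the two-sided $\varepsilon\simeq|s-t|$ asserted in the paper.
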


\begin{proof}
The integral representation off the diagonal is immediate from the definition. The size estimate \eqref{eq:kernel-size-soft-1d} follows from \eqref{eq:k-size} and from the fact that $0\le \chi\le 1$.

For the regularity in the first variable, we write
\[
K_{\varepsilon,\mathrm{sm}}(s,t)-K_{\varepsilon,\mathrm{sm}}(s',t)
=
\bigl(k(s,t)-k(s',t)\bigr)\chi\!\left(\frac{|s-t|}{\varepsilon}\right)
+
k(s',t)\Bigl(\chi\!\left(\frac{|s-t|}{\varepsilon}\right)-\chi\!\left(\frac{|s'-t|}{\varepsilon}\right)\Bigr).
\]
If $2|s-s'|\le |s-t|$, then also $|s'-t|\simeq |s-t|$. The first summand is bounded by \eqref{eq:kernel-smooth-1d-s}:
\[
\bigl|k(s,t)-k(s',t)\bigr|\,\chi\!\left(\frac{|s-t|}{\varepsilon}\right)
\le
\omega\!\left(\frac{|s-s'|}{|s-t|}\right)\frac{1}{|s-t|}.
\]
For the second summand we use the mean value theorem and the boundedness of $\chi'$:
\[
\left|\chi\!\left(\frac{|s-t|}{\varepsilon}\right)-\chi\!\left(\frac{|s'-t|}{\varepsilon}\right)\right|
\lesssim
\frac{||s-t|-|s'-t||}{\varepsilon}
\le
\frac{|s-s'|}{\varepsilon}.
\]
Moreover, this term can be nonzero only when $|s-t|\simeq \varepsilon$ or $|s'-t|\simeq \varepsilon$; hence, in that regime, one also has $\varepsilon\simeq |s-t|$. Combining this with \eqref{eq:k-size}, we obtain
\[
|k(s',t)|\left|\chi\!\left(\frac{|s-t|}{\varepsilon}\right)-\chi\!\left(\frac{|s'-t|}{\varepsilon}\right)\right|
\lesssim
\frac{1}{|s-t|}\,\frac{|s-s'|}{\varepsilon}
\lesssim
\frac{|s-s'|/|s-t|}{|s-t|}.
\]
This proves \eqref{eq:kernel-dini-soft-1d} with $\widetilde\omega(u)=\omega(u)+Cu$. Since
\[
\int_0^1 \widetilde\omega(u)\,\frac{\mathrm{d}u}{u}
=
\int_0^1 \omega(u)\,\frac{\mathrm{d}u}{u}+C<\infty,
\]
$\widetilde\omega$ remains a Dini modulus. The transposed estimate \eqref{eq:kernel-dini-soft-1d-transpose} is obtained in the same way using \eqref{eq:kernel-smooth-1d-t}.

By \eqref{eq:k-L2}, the family $\{T^{1D}_{\varepsilon,\mathrm{sm}}\}_{\varepsilon>0}$ is uniformly bounded on $L^2(\mathbb{R})$, which gives \eqref{eq:L2-soft-uniform-1d}. Thus the structural hypotheses required by Theorem~A of \cite{BallestaCondeAlonso2025DiniSparse} have been verified.
\end{proof}

\begin{corollary}\label{cor:sparse-soft-1d}
	Assume that the hypotheses of Proposition~\ref{prop:verify-sparse-black-box-1d} hold. Then there exists \(\eta\in(0,1)\), independent of \(\varepsilon\), such that for every pair of bounded, compactly supported, complex-valued functions \(F,G\) on \(\mathbb{R}\), there exists an \(\eta\)-sparse family \(\mathcal{S}_{\varepsilon,F,G}\) of intervals such that
	\begin{equation}\label{eq:sparse-soft-1d}
		\bigl|\langle T^{1D}_{\varepsilon,\mathrm{sm}}F,G\rangle\bigr|
		\lesssim
		\sum_{I\in\mathcal{S}_{\varepsilon,F,G}}
		\langle |F|\rangle_I\,\langle |G|\rangle_I\,|I|.
	\end{equation}
	The implicit constant is uniform in \(\varepsilon\).
\end{corollary}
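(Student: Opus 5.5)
The plan is to apply Theorem~A of \cite{BallestaCondeAlonso2025DiniSparse} to each operator $T^{1D}_{\varepsilon,\mathrm{sm}}$ separately. Proposition~\ref{prop:verify-sparse-black-box-1d} supplies the hypotheses of that theorem, and the task is to check that every constant in its conclusion is independent of $\varepsilon$.

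The first step is to recall what Theorem~A needs. It asks for a Calder\'on--Zygmund operator on $\mathbb{R}$ bounded on $L^2$ with norm $C_{L^2}$. Its kernel must satisfy the size bound with constant $C_K$ and the Dini smoothness in both variables with a modulus $\widetilde\omega$ having finite Dini integral $\|\widetilde\omega\|_{\mathrm{Dini}}$. The theorem then gives the following: for bounded compactly supported $F,G$ there is an $\eta$-sparse family $\mathcal{S}$ with
\[
|\langle TF,G\rangle|\le C\,(C_{L^2}+C_K+\|\widetilde\omega\|_{\mathrm{Dini}})\sum_{I\in\mathcal{S}}\langle|F|\rangle_I\langle|G|\rangle_I|I|.
\]
Here $\eta$ is an absolute parameter (for instance $\eta=1/2$, or something depending only on the dimension $1$ via the dyadic lattice construction). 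Theorems of this type are usually stated with the constant depending linearly on exactly these quantities, so I would cite the statement in that form. If it is stated only qualitatively, I would instead note that its proof gives a constant controlled by $C_{L^2}$, $C_K$ and the Dini norm only.

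The second step is to insert the uniform bounds from Proposition~\ref{prop:verify-sparse-black-box-1d}:
\begin{itemize}
\item $C_K\lesssim C_{k1}$, from \eqref{eq:kernel-size-soft-1d};
\item the modulus is $\widetilde\omega=\omega+Cu$, where $C$ depends only on $\|\chi'\|_\infty$ and $C_{k1}$, so its Dini norm is $\int_0^1\omega(u)\,\mathrm{d}u/u+C$;
\item $C_{L^2}\le C_{k3}$, by \eqref{eq:L2-soft-uniform-1d}.
\end{itemize}
None of these quantities involves $\varepsilon$. Fixing $\varepsilon$ and $F,G$, Theorem~A then produces $\mathcal{S}_{\varepsilon,F,G}$ with the same $\eta$ and an implicit constant depending only on $C_{k1}$, $C_{k3}$, the Dini functional and $\chi$, which is the dependence announced in Remark~\ref{rem:Hk-origin}. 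Since $F$ and $G$ are bounded and compactly supported, the pairing $\langle T^{1D}_{\varepsilon,\mathrm{sm}}F,G\rangle$ is well defined, so no density argument is needed.

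The step I expect to be the main obstacle is a technical compatibility point in the first step. Dini-type sparse results are often stated in a dyadic or non-increasing-modulus form, or with the smoothness condition $|s-s'|\le\frac12|s-t|$ in a slightly different normalization. I would handle this in three ways. First, replace $\widetilde\omega$ by its increasing majorant; it is already increasing as a sum of increasing functions. Second, reconcile the normalization of the smoothness region, which affects only constants. Third, pass from the dyadic sparse families of the theorem (using the three-lattice trick in $\mathbb{R}$) to a single sparse family of intervals, which keeps $\eta$ absolute and costs only a factor $3$ in the constant.
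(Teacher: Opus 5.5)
Your proposal is correct and takes essentially the paper's route: apply Theorem~A of \cite{BallestaCondeAlonso2025DiniSparse} to each $T^{1D}_{\varepsilon,\mathrm{sm}}$, with uniformity in $\varepsilon$ coming from the $\varepsilon$-independent size, Dini and $L^2$ constants of Proposition~\ref{prop:verify-sparse-black-box-1d}. The one difference is that the paper first invokes Theorem~A only for nonnegative $F,G$ and then handles complex data by splitting into $(\Re F)_\pm,(\Im F)_\pm$ (and likewise for $G$), using $0\le F_\alpha\le|F|$, $0\le G_\beta\le|G|$, and merging the finitely many sparse families; you apply the black box directly to complex inputs, which is fine provided Theorem~A is stated for general bounded compactly supported functions, and otherwise you would need exactly that splitting step.
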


\begin{proof}
	We first consider the case where \(F\) and \(G\) are nonnegative, bounded, and compactly supported. In that case, we apply Theorem~A of \cite{BallestaCondeAlonso2025DiniSparse} to each operator \(T^{1D}_{\varepsilon,\mathrm{sm}}\). Uniformity in \(\varepsilon\) follows from the fact that the size, Dini regularity, and \(L^2\)-boundedness parameters have been verified with constants independent of \(\varepsilon\) in Proposition~\ref{prop:verify-sparse-black-box-1d}.

	For general complex-valued functions, we write
	\[
	F=(\Re F)_+-(\Re F)_-+i(\Im F)_+-i(\Im F)_-,
	\]
	\[
	G=(\Re G)_+-(\Re G)_-+i(\Im G)_+-i(\Im G)_-.
	\]
	This decomposes \(\langle T^{1D}_{\varepsilon,\mathrm{sm}}F,G\rangle\) into a finite sum of terms of the form
	\[
	\langle T^{1D}_{\varepsilon,\mathrm{sm}}F_\alpha,G_\beta\rangle,
	\]
	where \(F_\alpha,G_\beta\) are nonnegative, bounded, and compactly supported, and satisfy pointwise
	\[
	0\le F_\alpha\le |F|,
	\qquad
	0\le G_\beta\le |G|.
	\]
	Applying the estimate already obtained in the nonnegative case to this finite collection of terms and reabsorbing the finite union of the resulting sparse families, we conclude \eqref{eq:sparse-soft-1d}.
\end{proof}

\begin{remark}\label{rem:sparse-black-box-source}
	The sparse domination of the smoothed family
	\[
	T^{1D}_{\varepsilon,\mathrm{sm}}
	\]
	is obtained here through Theorem~A of Ballesta--Yag\"ue--Conde--Alonso \cite{BallestaCondeAlonso2025DiniSparse}, once the relevant CZO-Dini hypotheses have been verified uniformly in \(\varepsilon\). The contribution of the present block therefore consists in this uniform verification for the smoothed family associated with our kernel and in its subsequent articulation with the hard--smooth error.
\end{remark}

We now turn to the hard--smooth error. This block is intrinsic to the manuscript, because it is the interface that allows one to return from the smoothed truncation to the hard truncation \eqref{eq:Lambda-eps-leb} without entering the later section devoted to maximal truncation and principal value.

\begin{lemma}\label{lem:hard-soft-pointwise-1d}
For every $\varepsilon>0$ and every locally integrable function $F$ on $\mathbb{R}$,
\begin{equation*}\label{eq:hard-soft-pointwise-1d}
|R^{1D}_\varepsilon F(s)|
=
\left|
\int_{\mathbb{R}} k(s,t)\Bigl(\mathbf 1_{\{|s-t|>\varepsilon\}}-\chi(|s-t|/\varepsilon)\Bigr)F(t)\,\mathrm{d}t
\right|
\lesssim
M F(s)
\end{equation*}
for almost every $s\in\mathbb{R}$, with a constant independent of $\varepsilon$.
\end{lemma}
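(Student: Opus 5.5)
The plan is to exploit the support of the difference of cutoffs. Since $\chi(r)=0$ for $0\le r\le 1$ and $\chi(r)=1$ for $r\ge 2$, the function
\[
m_\varepsilon(s,t):=\mathbf 1_{\{|s-t|>\varepsilon\}}-\chi(|s-t|/\varepsilon)
\]
vanishes whenever $|s-t|\le\varepsilon$, because there both terms are zero. It also vanishes whenever $|s-t|\ge 2\varepsilon$, because there both terms equal one. In the remaining annulus $\varepsilon<|s-t|<2\varepsilon$ one has $0\le m_\varepsilon=1-\chi\le 1$. Thus the kernel of $R^{1D}_\varepsilon$ is supported in the set $\{\varepsilon<|s-t|<2\varepsilon\}$ and bounded there by $|k(s,t)|$.

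Next I use the size condition \eqref{eq:k-size} of (Hk1). On the annulus it gives $|k(s,t)|\le C_{k1}/|s-t|<C_{k1}/\varepsilon$. Hence, for every $s$,
\[
|R^{1D}_\varepsilon F(s)|
\le
\int_{\varepsilon<|s-t|<2\varepsilon}\frac{C_{k1}}{\varepsilon}\,|F(t)|\,\mathrm{d}t
\le
\frac{4C_{k1}}{|I_s|}\int_{I_s}|F(t)|\,\mathrm{d}t,
\]
where $I_s:=(s-2\varepsilon,s+2\varepsilon)$ has length $4\varepsilon$. The interval $I_s$ contains $s$, so Definition~\ref{def:hardy-littlewood-maximal} gives $|R^{1D}_\varepsilon F(s)|\le 4C_{k1}\,MF(s)$. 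The constant depends only on $C_{k1}$, not on $\varepsilon$ or on $\chi$.

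There is no real obstacle here; the only points to check are that the integral defining $R^{1D}_\varepsilon F(s)$ is meaningful and that the asserted equality holds. Absolute convergence follows from the bound above, since $F\in L^1_{\mathrm{loc}}$ and the integration region is the bounded set $I_s$. This also shows that $T^{1D}_\varepsilon F(s)$ and $T^{1D}_{\varepsilon,\mathrm{sm}}F(s)$ differ by an absolutely convergent integral. So the identity $R^{1D}_\varepsilon F=T^{1D}_\varepsilon F-T^{1D}_{\varepsilon,\mathrm{sm}}F$ holds wherever either truncation converges. Even when the tails of each truncation are not individually convergent, the difference is well defined as the displayed integral. Moreover, the pointwise bound holds for every $s$, not merely almost every $s$.
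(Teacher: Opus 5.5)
Your proposal is correct and follows the paper's own proof. Both localize the kernel of $R^{1D}_\varepsilon$ to the annulus $\varepsilon<|s-t|<2\varepsilon$, apply \textbf{(Hk1)} to bound $|k(s,t)|$ by $C_{k1}/\varepsilon$ there, and compare with the average of $|F|$ over $(s-2\varepsilon,s+2\varepsilon)$, obtaining the explicit constant $4C_{k1}$.
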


\begin{proof}
The factor
\[
\mathbf 1_{\{|s-t|>\varepsilon\}}-\chi(|s-t|/\varepsilon)
\]
is supported where $\varepsilon<|s-t|<2\varepsilon$, because $\chi(r)=0$ for $0\le r\le 1$ and $\chi(r)=1$ for $r\ge 2$. By the size estimate \eqref{eq:k-size},
\[
|R^{1D}_\varepsilon F(s)|
\le
C_{k1}\int_{\varepsilon<|s-t|<2\varepsilon}\frac{|F(t)|}{|s-t|}\,\mathrm{d}t
\le
\frac{C_{k1}}{\varepsilon}\int_{|s-t|<2\varepsilon}|F(t)|\,\mathrm{d}t
\lesssim
M F(s).
\]
\end{proof}

\begin{lemma}\label{lem:hard-soft-error-bilinear-1d}
	For every pair of bounded, compactly supported, complex-valued functions \(F,G\),
	\begin{equation*}\label{eq:hard-soft-error-bilinear-1d}
		\bigl|\langle R^{1D}_\varepsilon F,G\rangle\bigr|
		\lesssim
		\langle M(|F|),|G|\rangle.
	\end{equation*}
	Consequently, there exists a sparse family \(\mathcal{S}'_{F,G}\) such that
	\begin{equation*}\label{eq:hard-soft-error-sparse-1d}
		\bigl|\langle R^{1D}_\varepsilon F,G\rangle\bigr|
		\lesssim
		\sum_{I\in\mathcal{S}'_{F,G}}
		\langle |F|\rangle_I\,\langle |G|\rangle_I\,|I|,
	\end{equation*}
	uniformly in \(\varepsilon\).
\end{lemma}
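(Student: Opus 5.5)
The first bound follows directly from Lemma~\ref{lem:hard-soft-pointwise-1d}. Writing
\[
\langle R^{1D}_\varepsilon F,G\rangle=\int_{\mathbb{R}} R^{1D}_\varepsilon F(s)\,G(s)\,\mathrm{d}s ,
\]
we note that $F\in L^\infty_c$. The kernel of $R^{1D}_\varepsilon$ is supported in $\varepsilon<|s-t|<2\varepsilon$ and bounded there by $C_{k1}/\varepsilon$. Hence the integral is absolutely convergent, and the pointwise bound $|R^{1D}_\varepsilon F(s)|\lesssim M F(s)=M(|F|)(s)$ gives
\[
|\langle R^{1D}_\varepsilon F,G\rangle|\le \int |R^{1D}_\varepsilon F|\,|G| \lesssim \langle M(|F|),|G|\rangle ,
\]
with a constant depending only on $C_{k1}$.

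For the sparse conclusion, I would avoid the full maximal function and work with a localized version instead. The pointwise proof of Lemma~\ref{lem:hard-soft-pointwise-1d} in fact shows
\[
|R^{1D}_\varepsilon F(s)|\lesssim \frac{1}{4\varepsilon}\int_{|s-t|<2\varepsilon}|F(t)|\,\mathrm{d}t .
\]
This is an average over the interval $I_s=(s-2\varepsilon,s+2\varepsilon)$.

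I then tile $\mathbb{R}$ by the intervals $J_j=[j\varepsilon,(j+1)\varepsilon)$, $j\in\mathbb{Z}$. For $s\in J_j$ one has $I_s\subset 5J_j$, the concentric interval of length $5\varepsilon$, so
\[
|\langle R^{1D}_\varepsilon F,G\rangle| \lesssim \sum_j \langle|F|\rangle_{5J_j}\int_{J_j}|G| \lesssim \sum_j \langle|F|\rangle_{5J_j}\langle|G|\rangle_{5J_j}|5J_j| .
\]
Since $F,G$ have compact support, only finitely many $j$ contribute.

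It remains to see that $\{5J_j\}_j$ is sparse. The intervals $5J_j$ overlap with bounded multiplicity 5, so I split the family into five subfamilies $\{5J_j: j\equiv m \bmod 5\}$, each of which is pairwise disjoint. A disjoint family is $\eta$-sparse with $E_I=I$ for any $\eta<1$. This gives a sum of five sparse forms.

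The only real obstacle is that the statement asks for a single sparse family, whereas this argument produces five. I would handle it in one of two ways. The first option is to absorb the five families by dilation: replace each $5J_j$ by the $25\varepsilon$-grid intervals of a single lattice at scale $25\varepsilon$ containing it, so that averages change by at most a factor $5$. The second option is to observe that the union of the five families is still $1/5$-sparse: take $E_{5J_j}:=J_j$, which are pairwise disjoint and satisfy $|J_j|=\tfrac15|5J_j|$. The second option is cleanest. It yields $\mathcal S'_{F,G}=\{5J_j\}$ as a $\tfrac15$-sparse family with constants uniform in $\varepsilon$, and in fact independent of $F$ and $G$ except for the finite selection of indices $j$.
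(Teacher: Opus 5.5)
Your proof is correct. For the first inequality it matches the paper: the pointwise bound of Lemma~\ref{lem:hard-soft-pointwise-1d}, integrated against $|G|$. For the sparse conclusion you take a different route.

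The paper stays with the full maximal function. It obtains the sparse form by citing the standard bilinear sparse domination of $\langle M(|F|),|G|\rangle$. That black box typically involves a reduction to dyadic lattices and a stopping-time construction, and it produces a family depending on $F$ and $G$, possibly as a finite union to be merged as in Remark~\ref{rem:finite-union-sparse}.

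You instead keep the single-scale structure visible in the proof of Lemma~\ref{lem:hard-soft-pointwise-1d}. Since $R^{1D}_\varepsilon$ is dominated by an average at scale $2\varepsilon$, the grid $J_j=[j\varepsilon,(j+1)\varepsilon)$ and the enlargements $5J_j=[(j-2)\varepsilon,(j+3)\varepsilon)$ suffice, with pointwise constant $5C_{k1}$. The choice $E_{5J_j}=J_j$ then makes $\{5J_j\}$ explicitly $\tfrac15$-sparse.

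What each route buys:
\begin{itemize}
\item Your argument is elementary and self-contained. It gives an explicit sparseness constant, and the family depends only on $\varepsilon$ and the supports.
\item The paper's argument is shorter and more general: it applies verbatim to any operator pointwise dominated by $M$. Yours uses the specific localization of the hard--smooth error at scale $\varepsilon$.
\end{itemize}

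One caveat concerns your discarded first option. As stated it would fail, because an interval of length $5\varepsilon$ can straddle a point of a fixed $25\varepsilon$-lattice and then lies in no single grid interval. Since you use the second option, this does not affect your proof.
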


\begin{proof}
	By Lemma~\ref{lem:hard-soft-pointwise-1d},
	\[
	|R^{1D}_\varepsilon F(s)|
	\lesssim
	M(|F|)(s)
	\]
	for almost every \(s\in\mathbb{R}\). Integrating against \(|G|\), we obtain
	\[
	\bigl|\langle R^{1D}_\varepsilon F,G\rangle\bigr|
	\le
	\int_{\mathbb{R}} |R^{1D}_\varepsilon F(s)|\,|G(s)|\,\mathrm{d}s
	\lesssim
	\int_{\mathbb{R}} M(|F|)(s)\,|G(s)|\,\mathrm{d}s.
	\]
	The second inequality follows from the standard bilinear sparse domination of the Hardy--Littlewood maximal operator applied to \(|F|\) and \(|G|\).
\end{proof}

\begin{theorem}\label{thm:sparse-domination-1d}
	Assume that the hypotheses of Proposition~\ref{prop:verify-sparse-black-box-1d} hold. Then there exists \(\eta\in(0,1)\) such that, for every pair of bounded, compactly supported, complex-valued functions \(F,G\) on \(\mathbb{R}\), and for every \(\varepsilon>0\), there exists an \(\eta\)-sparse family \(\mathcal{S}_{\varepsilon,F,G}\) of intervals such that
	\begin{equation}\label{eq:sparse-domination-1d}
		\bigl|\Lambda_\varepsilon(F,G)\bigr|
		=
		\bigl|\langle T^{1D}_\varepsilon F,G\rangle\bigr|
		\lesssim
		\sum_{I\in\mathcal{S}_{\varepsilon,F,G}}
		\langle |F|\rangle_I\,\langle |G|\rangle_I\,|I|.
	\end{equation}
	The implicit constant is uniform in \(\varepsilon\).
\end{theorem}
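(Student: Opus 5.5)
The plan is to write the hard truncation as smooth truncation plus error, using \eqref{eq:R1d-def}:
\[
\Lambda_\varepsilon(F,G)=\langle T^{1D}_\varepsilon F,G\rangle
=\langle T^{1D}_{\varepsilon,\mathrm{sm}}F,G\rangle+\langle R^{1D}_\varepsilon F,G\rangle .
\]
For bounded compactly supported $F,G$, all three pairings converge absolutely. Indeed, the kernels are bounded by $C_{k1}/\varepsilon$ on their supports by \eqref{eq:k-size}, so this splitting is legitimate pointwise and after integration against $G$.

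For the first term we invoke Corollary~\ref{cor:sparse-soft-1d}. It gives an $\eta_1$-sparse family $\mathcal S^{(1)}$ of intervals, with $\eta_1$ and the implicit constant independent of $\varepsilon$, dominating $|\langle T^{1D}_{\varepsilon,\mathrm{sm}}F,G\rangle|$. For the second term we invoke Lemma~\ref{lem:hard-soft-error-bilinear-1d}. It gives $|\langle R^{1D}_\varepsilon F,G\rangle|\lesssim\langle M(|F|),|G|\rangle$, and this in turn is dominated by a sparse form over a family $\mathcal S^{(2)}$, uniformly in $\varepsilon$. To be definite about the sparseness constant of $\mathcal S^{(2)}$, I would make the maximal step explicit. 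By the one-third trick, $M$ is pointwise comparable to a sum of three dyadic maximal operators over shifted dyadic lattices. For each such dyadic maximal function, the standard Calderón--Zygmund stopping-time construction on $|F|$ yields a $\tfrac12$-sparse family $\mathcal S_j$ with $\langle M^{\mathcal D_j}|F|,|G|\rangle\lesssim\sum_{I\in\mathcal S_j}\langle|F|\rangle_I\langle|G|\rangle_I|I|$. Since $F$ is compactly supported, only finitely many scales above any fixed level matter.

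The main obstacle is combining the several sparse families into a single $\eta$-sparse family with $\eta$ independent of $\varepsilon$, $F$, $G$. A union of finitely many sparse families need not be sparse with the same $\eta$. There are three ways to handle this.

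\emph{Carleson route.} A family is $\eta$-sparse iff it is $\Lambda$-Carleson with $\Lambda\simeq\eta^{-1}$, i.e.
\[
\sum_{J\in\mathcal S,\,J\subset I}|J|\le\Lambda|I|\quad\text{for every interval } I .
\]
The Carleson condition is additive under finite unions, so $\mathcal S^{(1)}\cup\mathcal S^{(2)}\cup\dots$ is Carleson with constant the sum of the constants, hence $\eta$-sparse with $\eta$ depending only on $\eta_1$ and $\tfrac12$. In one dimension with general intervals, I would cite the Carleson--sparse equivalence (Lerner--Nazarov, or Hänninen for general collections) rather than reprove it.

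\emph{Weaker route.} If one wants to avoid that equivalence, note that the finite union of $m$ families, each $\eta_i$-sparse, is automatically $(\min_i\eta_i)/m$-sparse. To see this, split each $E_I$ into pieces so that the sets assigned from different families are disjoint. This is not automatic pointwise, so the Carleson route is cleaner.

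\emph{Single-family route.} Alternatively, one can absorb everything into the sum over a single family by dominating each interval by a dyadic interval of comparable length from one of three lattices, and then pass to the sparse form by dyadic stopping times. That yields one sparse family per lattice and leaves the same union issue.

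Adding the two bounds then gives \eqref{eq:sparse-domination-1d}. The implicit constant depends only on $C_{k1}$, $C_{k3}$, the Dini functional and $\chi$, as in Remark~\ref{rem:Hk-origin}, and it is uniform in $\varepsilon$.
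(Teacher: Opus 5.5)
Your proposal is correct and follows the paper's proof. It uses the same splitting $T^{1D}_\varepsilon=T^{1D}_{\varepsilon,\mathrm{sm}}+R^{1D}_\varepsilon$, Corollary~\ref{cor:sparse-soft-1d} for the smooth part, Lemma~\ref{lem:hard-soft-error-bilinear-1d} for the error, and a finite reabsorption of the resulting sparse families. Your Carleson-additivity argument supplies the justification that the paper records without proof in Remark~\ref{rem:finite-union-sparse}; your "weaker route" also works as stated, by splitting each overlap cell of the sets $E_I$ evenly among the at most $m$ intervals containing it, which gives $\min_i\eta_i/m$ directly.
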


\begin{proof}
	We decompose
	\[
	T^{1D}_\varepsilon
	=
	T^{1D}_{\varepsilon,\mathrm{sm}}+R^{1D}_\varepsilon.
	\]
	The contribution of the first term is controlled by Corollary~\ref{cor:sparse-soft-1d}, while the contribution of the second is controlled by Lemma~\ref{lem:hard-soft-error-bilinear-1d}. Adding the two bounds yields a sum of two sparse forms in terms of \(|F|\) and \(|G|\). Reabsorbing the finite union of the sparse families that appear, we conclude \eqref{eq:sparse-domination-1d}.
\end{proof}

\begin{remark}[Finite reabsorption of sparse families]\label{rem:finite-union-sparse}
	We shall use the following elementary stability property of sparseness: the union of finitely many $\eta_j$-sparse families can be reabsorbed, after modifying only the sparseness constant, into an $\eta$-sparse family, with $\eta>0$ depending only on the parameters $\eta_j$. All subsequent constants depending on $\eta$ incorporate this finite loss.
\end{remark}

\subsection{Transfer to the truncated geometric operator}\label{subsec:transfer-to-Teps}

We now combine Theorem~\ref{thm:sparse-domination-1d} with the exact Lebesgue identity of Corollary~\ref{cor:reduction-lebesgue}. At this stage one must carefully distinguish two levels of input: the preceding one-dimensional sparse theorem already allows complex-valued or signed functions, but it is still formulated for bounded and compactly supported inputs in the level variable. Thus, the transfer to the truncated geometric operator is obtained directly in the regime where the transferred fiber densities \(w_{\psi,f}\) and \(w_{\phi,g}\) belong to \(L^\infty_c(\mathbb{R})\).

\begin{corollary}\label{cor:sparse-transfer-Teps}
	Assume the hypotheses of Corollary~\ref{cor:reduction-lebesgue} and assume that the associated one-dimensional kernel satisfies the package \textbf{(Hk)} of Subsection~\ref{subsec:kernel-1d}. Assume moreover that, for the admissible pair of test functions \(f,g\), the transferred fiber densities satisfy
	\[
	w_{\psi,f},\,w_{\phi,g}\in L^\infty_c(\mathbb{R}).
	\]
	Then, for every \(\varepsilon>0\), there exists a sparse family \(\mathcal{S}_{\varepsilon,f,g}\) of intervals such that
	\begin{equation}\label{eq:sparse-domination-Teps}
		\bigl|\langle T_\varepsilon f,g\rangle\bigr|
		\lesssim
		\sum_{I\in\mathcal{S}_{\varepsilon,f,g}}
		\langle |w_{\psi,f}|\rangle_I\,\langle |w_{\phi,g}|\rangle_I\,|I|.
	\end{equation}
The implicit constant depends only on the analytic parameters of the one-dimensional model: the structural constants of the package \textbf{(Hk)}, the sparse-domination constant of the smoothed block, and the universal constants entering the hard--smooth error control.
\end{corollary}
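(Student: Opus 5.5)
The plan is to combine the exact Lebesgue identity of Corollary~\ref{cor:reduction-lebesgue} with the one-dimensional sparse bound of Theorem~\ref{thm:sparse-domination-1d}, applied to the pair $F:=w_{\psi,f}$ and $G:=w_{\phi,g}$.

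First, I will place myself under the hypotheses of Corollary~\ref{cor:reduction-lebesgue}. Thus $f\in L^\infty(\Omega_y)$ and $g\in L^\infty(\Omega_x)$ have compact support, $k$ satisfies (Hk1), and $\nu_\psi\ll \mathrm{d}t$, $\nu_\phi\ll \mathrm{d}s$. The identity \eqref{eq:theorem-A-lebesgue} then gives
\[
\langle T_\varepsilon f,g\rangle=\iint_{|s-t|>\varepsilon}k(s,t)\,w_{\psi,f}(t)\,w_{\phi,g}(s)\,\mathrm{d}t\,\mathrm{d}s .
\]
By the definition \eqref{eq:Lambda-eps-leb}, the right-hand side is exactly $\Lambda_\varepsilon(w_{\psi,f},w_{\phi,g})$. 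The only point to check here is that the two integrals agree as absolutely convergent integrals. For that I will use the truncation bound $|k|\le C_{k1}/\varepsilon$ on $\{|s-t|>\varepsilon\}$ together with $w_{\psi,f},w_{\phi,g}\in L^1(\mathbb{R})$. The $L^1$ property holds because these densities are the Lebesgue densities of the finite complex measures $\nu_{\psi,f}$ and $\nu_{\phi,g}$ (Lemma~\ref{lem:nu-theta-h-ac}). In any case they are bounded with compact support, which is the additional hypothesis of the statement.

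Second, that additional hypothesis $w_{\psi,f},w_{\phi,g}\in L^\infty_c(\mathbb{R})$ places $F$ and $G$ precisely in the class of bounded, compactly supported, complex-valued functions to which Theorem~\ref{thm:sparse-domination-1d} applies. Since $k$ satisfies (Hk), Proposition~\ref{prop:verify-sparse-black-box-1d} verifies that theorem's hypotheses. The theorem then yields $\eta\in(0,1)$, independent of $\varepsilon$, and an $\eta$-sparse family $\mathcal S_{\varepsilon,F,G}$ with
\[
|\Lambda_\varepsilon(F,G)|\lesssim \sum_{I\in\mathcal S_{\varepsilon,F,G}}\langle|F|\rangle_I\langle|G|\rangle_I|I| .
\]
I will set $\mathcal S_{\varepsilon,f,g}:=\mathcal S_{\varepsilon,w_{\psi,f},w_{\phi,g}}$ and substitute this bound into the identity of the first step; this gives \eqref{eq:sparse-domination-Teps}.

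Third, I will track the constants. The implicit constant in Theorem~\ref{thm:sparse-domination-1d} comes from three sources:
\begin{itemize}
\item the uniform sparse constant of Corollary~\ref{cor:sparse-soft-1d}, which depends only on $C_{k1}$, $C_{k3}$, the Dini integral of $\widetilde\omega$, and $\chi$;
\item the maximal-function bound of Lemma~\ref{lem:hard-soft-error-bilinear-1d}, which depends on $C_{k1}$ and universal constants;
\item the finite reabsorption of Remark~\ref{rem:finite-union-sparse}.
\end{itemize}
None of these involves $f$, $g$, $\phi$, $\psi$ or $\varepsilon$, which is the stated dependence.

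There is no genuine analytic obstacle, since both ingredients are already established. The delicate step is the first one: the reduction identity and the one-dimensional form must be applied to the same integrable objects, and no hidden integrability is being assumed. The densities could a priori fail to be bounded, for instance near critical values. This is why the statement imposes the $L^\infty_c$ condition as a hypothesis rather than deriving it, and the plan uses it exactly at the point where Theorem~\ref{thm:sparse-domination-1d} requires it.
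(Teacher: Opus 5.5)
Your proposal is correct and follows the paper's proof exactly. You rewrite $\langle T_\varepsilon f,g\rangle$ as $\Lambda_\varepsilon(w_{\psi,f},w_{\phi,g})$ via Corollary~\ref{cor:reduction-lebesgue}, then apply Theorem~\ref{thm:sparse-domination-1d} directly to the bounded, compactly supported densities. Your extra checks (absolute convergence from $|k|\le C_{k1}/\varepsilon$ together with $w_{\psi,f},w_{\phi,g}\in L^1(\mathbb{R})$, and the tracking of constants) are correct and only make explicit what the paper leaves implicit.
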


\begin{proof}
	By Corollary~\ref{cor:reduction-lebesgue},
	\[
	\langle T_\varepsilon f,g\rangle
	=
	\Lambda_\varepsilon(w_{\psi,f},w_{\phi,g}).
	\]
	Since \(w_{\psi,f}\) and \(w_{\phi,g}\) are, by hypothesis, bounded, compactly supported, and possibly complex-valued functions, we may apply Theorem~\ref{thm:sparse-domination-1d} directly to them. We thus obtain
	\[
	\bigl|\Lambda_\varepsilon(w_{\psi,f},w_{\phi,g})\bigr|
	\lesssim
	\sum_{I\in\mathcal{S}_{\varepsilon,f,g}}
	\langle |w_{\psi,f}|\rangle_I\,\langle |w_{\phi,g}|\rangle_I\,|I|,
	\]
	which is exactly \eqref{eq:sparse-domination-Teps}.
\end{proof}

\begin{remark}[Scope of the one-dimensional sparse block]\label{rem:sparse-reduction-scope}\label{rem:sparse-1d-proof-scope}
	The sparse block of this section is closed by combining three ingredients: uniform sparse domination for the smoothed family, the hard--smooth comparison through the Hardy--Littlewood maximal operator, and the final transfer to the truncated geometric operator. The conclusion obtained is Corollary~\ref{cor:sparse-transfer-Teps}, in the effective regime where the transferred fiber densities belong to \(L^\infty_c(\mathbb{R})\).

	From this point on, the remaining part of the manuscript no longer consists in refining the one-dimensional sparse theory, but rather in structurally identifying those transferred inputs and controlling them from the geometry of the fibers and the regularity of the pushforwards. The extension to broader classes of densities would require an additional approximation and limiting argument, which lies outside the scope of this section.
\end{remark}

%% file: 26_pv_maximal_capsule.tex
\section{Maximal truncation and principal values in the one-dimensional model}\label{sec:pv-maximal-capsule}

This section collects complementary observations on the one-dimensional model fixed in Section~\ref{sec:sparse-1d}. Its role is to record the robustness of the formulation with respect to the cutoff, to compare precisely the hard and smoothed versions of the maximal truncation, and to delimit the scope of the observations concerning principal values.

The role of this block is therefore strictly complementary. The sparse domination of the smoothed operator and its transfer to the geometric operator were already closed in Section~\ref{sec:sparse-1d}; here we only add the robustness information needed to show that those formulations do not depend essentially on the choice of cutoff and to isolate the exact place of the maximal comparisons and principal-value observations within the chain of the manuscript.

\subsection{Robustness: cutoff independence}\label{subsec:pv-maximal-robust}

The following lemma quantifies the fact that different admissible choices of cutoff produce smooth truncations that differ, uniformly in $\varepsilon$, by a term controlled by the Hardy--Littlewood maximal operator.

\begin{lemma}[Equivalence of smooth truncations]\label{lem:cutoff-equivalence}
	Let $\chi_1,\chi_2:[0,\infty)\to[0,1]$ be measurable functions such that
	\[
	\chi_j(r)=0 \ \text{for } 0\le r\le 1,
	\qquad
	\chi_j(r)=1 \ \text{for } r\ge 2,
	\qquad j=1,2.
	\]
	For $j=1,2$, define
	\[
	(T^{1D,\chi_j}_{\varepsilon,\mathrm{sm}}f)(s)
	:=
	\int_{\mathbb{R}} k(s,t)\,\chi_j\!\left(\frac{|s-t|}{\varepsilon}\right)f(t)\,\mathrm{d}t.
	\]
	Then, for every $f\in L^\infty_c(\mathbb{R})$,
	\begin{equation}\label{eq:cutoff-equivalence}
		\sup_{\varepsilon>0}\bigl|(T^{1D,\chi_1}_{\varepsilon,\mathrm{sm}}f)(s)-(T^{1D,\chi_2}_{\varepsilon,\mathrm{sm}}f)(s)\bigr|
		\le C\,C_{k1}\,Mf(s)
	\end{equation}
	for almost every $s\in\mathbb{R}$, where $C>0$ is a universal constant.
\end{lemma}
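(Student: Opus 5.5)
The plan is to mimic the proof of Lemma~\ref{lem:hard-soft-pointwise-1d}: the difference of the two smooth truncations is an integral against a kernel supported in a single dyadic annulus $\varepsilon<|s-t|<2\varepsilon$, so the size bound \textbf{(Hk1)} alone turns it into an average of $|f|$ over an interval of length comparable to $\varepsilon$ centred at $s$. No regularity of $k$ or of the cutoffs is needed; in particular measurability of $\chi_j$ suffices.

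First, fix $f\in L^\infty_c(\mathbb{R})$, $\varepsilon>0$, and a point $s$. Both integrals are absolutely convergent there, since $\chi_j(|s-t|/\varepsilon)=0$ for $|s-t|\le\varepsilon$, $|k(s,t)|\le C_{k1}/\varepsilon$ on the remaining region, and $f$ is bounded with compact support. We may therefore subtract under the integral sign:
\[
(T^{1D,\chi_1}_{\varepsilon,\mathrm{sm}}f)(s)-(T^{1D,\chi_2}_{\varepsilon,\mathrm{sm}}f)(s)
=\int_{\mathbb{R}}k(s,t)\Bigl(\chi_1\bigl(\tfrac{|s-t|}{\varepsilon}\bigr)-\chi_2\bigl(\tfrac{|s-t|}{\varepsilon}\bigr)\Bigr)f(t)\,\mathrm{d}t .
\]
By the hypotheses on $\chi_1,\chi_2$, the factor in parentheses vanishes when $|s-t|\le\varepsilon$ and when $|s-t|\ge 2\varepsilon$. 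Since both cutoffs take values in $[0,1]$, it is bounded by $1$ in absolute value on the annulus $\varepsilon<|s-t|<2\varepsilon$.

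Second, insert \eqref{eq:k-size} on that annulus to obtain
\[
\Bigl|(T^{1D,\chi_1}_{\varepsilon,\mathrm{sm}}f)(s)-(T^{1D,\chi_2}_{\varepsilon,\mathrm{sm}}f)(s)\Bigr|\le C_{k1}\int_{\varepsilon<|s-t|<2\varepsilon}\frac{|f(t)|}{|s-t|}\,\mathrm{d}t\le\frac{C_{k1}}{\varepsilon}\int_{|s-t|<2\varepsilon}|f(t)|\,\mathrm{d}t .
\]
The interval $(s-2\varepsilon,s+2\varepsilon)$ contains $s$ and has length $4\varepsilon$, so the right-hand side is at most $4C_{k1}\,Mf(s)$. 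Here $M$ is understood as acting on $|f|$, as in Definition~\ref{def:hardy-littlewood-maximal}.

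Third, this bound holds for every $s$ and every $\varepsilon>0$, with a constant independent of $\varepsilon$. Taking the supremum over $\varepsilon$ therefore gives \eqref{eq:cutoff-equivalence} with $C=4$, indeed for every $s$, and not only almost every $s$. The only delicate point is the justification in the first step that both truncated integrals converge absolutely at each $s$, which is what allows the difference to be written as a single integral. This follows from the truncation and the boundedness and compact support of $f$, exactly as in Step~1 of the proof of Theorem~\ref{thm:exact-pushforward-reduction}. Beyond that, the argument is a routine annulus estimate, and I expect no real obstacle.
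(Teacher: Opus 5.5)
Your proposal is correct and follows the paper's proof essentially verbatim: the paper sets $m:=\chi_1-\chi_2$, notes that it is supported in $[1,2]$ with $|m|\le 1$, applies \eqref{eq:k-size} on the annulus $\varepsilon<|s-t|<2\varepsilon$, bounds the result by $4C_{k1}\,Mf(s)$, and then takes the supremum in $\varepsilon$. Your justification of absolute convergence, which lets you write the difference as a single integral, is a welcome detail the paper leaves implicit. The upgrade from almost every $s$ to every $s$ is harmless, but it is only meaningful at those $s$ where $t\mapsto k(s,t)$ is measurable, which is guaranteed only for almost every $s$.
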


\begin{proof}
	Let $m:=\chi_1-\chi_2$. Then $m$ is supported in $[1,2]$ and $|m|\le 1$. By \eqref{eq:k-size},
	\[
	\bigl|(T^{1D,\chi_1}_{\varepsilon,\mathrm{sm}}f)(s)-(T^{1D,\chi_2}_{\varepsilon,\mathrm{sm}}f)(s)\bigr|
	=
	\left|\int_{\mathbb{R}} k(s,t)\,m\!\left(\frac{|s-t|}{\varepsilon}\right)f(t)\,\mathrm{d}t\right|
	\le
	\int_{\varepsilon<|s-t|<2\varepsilon}\frac{C_{k1}}{|s-t|}\,|f(t)|\,\mathrm{d}t.
	\]
	Since $|s-t|\ge \varepsilon$ on the domain of integration,
	\[
	\int_{\varepsilon<|s-t|<2\varepsilon}\frac{C_{k1}}{|s-t|}\,|f(t)|\,\mathrm{d}t
	\le
	\frac{C_{k1}}{\varepsilon}\int_{|s-t|<2\varepsilon}|f(t)|\,\mathrm{d}t
	\le
	4C_{k1}\,Mf(s).
	\]
	Taking the supremum over $\varepsilon>0$ gives \eqref{eq:cutoff-equivalence}.
\end{proof}

\subsection{Hard--smooth comparison and maximal control}\label{subsec:pv-maximal-hard-soft}

This subsection records the maximal level of the comparison between hard and smooth truncations. The comparison at a fixed \(\varepsilon\) was already used in Section~\ref{sec:sparse-1d} to close the sparse domination; here the relevant object is the supremum in \(\varepsilon\), where the difference remains absorbable by the Hardy--Littlewood maximal operator.

\begin{lemma}[Maximal comparison: hard vs. smooth]\label{lem:hard-vs-soft-maximal-1d}
	For every $f\in L^\infty_c(\mathbb{R})$ one has
	\begin{equation}\label{eq:hard-vs-soft-maximal-1d}
		\sup_{\varepsilon>0}|T^{1D}_\varepsilon f(s)|
		\le
		\sup_{\varepsilon>0}|T^{1D}_{\varepsilon,\mathrm{sm}}f(s)|+C\,C_{k1}\,Mf(s)
	\end{equation}
	for almost every $s\in\mathbb{R}$, where $C>0$ is a universal constant depending only on the cutoff fixed in \eqref{eq:T-soft}.
\end{lemma}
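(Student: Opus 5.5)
The plan is to fix $\varepsilon>0$ and a point $s$, and use the decomposition \eqref{eq:R1d-def},
\[
T^{1D}_\varepsilon f(s)=T^{1D}_{\varepsilon,\mathrm{sm}}f(s)+R^{1D}_\varepsilon f(s),
\]
which is valid pointwise for every $s$ when $f\in L^\infty_c(\mathbb{R})$. The pointwise bound on the error term is what makes this work.

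First, I check that everything is well defined. For $f\in L^\infty_c$, both the hard-truncated integral and the smoothed integral converge absolutely for every $s$. On the support of $\mathbf 1_{\{|s-t|>\varepsilon\}}$, and on the support of $\chi(|s-t|/\varepsilon)$, one has $|s-t|>\varepsilon$. By \eqref{eq:k-size} the integrand is then bounded by $C_{k1}\varepsilon^{-1}|f(t)|$, which is integrable. So the decomposition holds at every $s$, not merely almost everywhere.

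Next, I bound the error term exactly as in Lemma~\ref{lem:hard-soft-pointwise-1d}. The multiplier $\mathbf 1_{(1,\infty)}(r)-\chi(r)$ is supported in $(1,2)$ and has absolute value at most $1$, since $0\le\chi\le1$. Hence
\[
|R^{1D}_\varepsilon f(s)|\le C_{k1}\int_{\varepsilon<|s-t|<2\varepsilon}\frac{|f(t)|}{|s-t|}\,\mathrm{d}t\le \frac{C_{k1}}{\varepsilon}\int_{s-2\varepsilon}^{s+2\varepsilon}|f(t)|\,\mathrm{d}t\le 4C_{k1}\,Mf(s).
\]
The last step uses the interval $(s-2\varepsilon,s+2\varepsilon)$, which contains $s$, in Definition~\ref{def:hardy-littlewood-maximal}. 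The constant $4$ is independent of $\varepsilon$ and of $s$. Alternatively, this step is Lemma~\ref{lem:cutoff-equivalence} applied with $\chi_1=\mathbf 1_{(1,\infty)}$ and $\chi_2=\chi$, which is admissible there because that lemma only requires measurable cutoffs.

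Then the triangle inequality gives
\[
|T^{1D}_\varepsilon f(s)|\le |T^{1D}_{\varepsilon,\mathrm{sm}}f(s)|+4C_{k1}Mf(s)\le \sup_{\varepsilon'>0}|T^{1D}_{\varepsilon',\mathrm{sm}}f(s)|+4C_{k1}Mf(s).
\]
Taking the supremum over $\varepsilon>0$ on the left yields \eqref{eq:hard-vs-soft-maximal-1d} with $C=4$.

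There is no genuine obstacle here. The only point requiring care is that the right-hand side does not depend on $\varepsilon$, so the error bound must be uniform in $\varepsilon$, and it is. Measurability of the suprema is not needed, since the inequality is pointwise for every $s$. The size condition (Hk1) alone suffices; neither the Dini regularity nor the $L^2$ bound is used.
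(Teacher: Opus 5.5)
Your proposal is correct and matches the paper's proof: split $T^{1D}_\varepsilon=T^{1D}_{\varepsilon,\mathrm{sm}}+R^{1D}_\varepsilon$, use \textbf{(Hk1)} and the support of $\mathbf 1_{\{|s-t|>\varepsilon\}}-\chi(|s-t|/\varepsilon)$ in $\{\varepsilon<|s-t|<2\varepsilon\}$ to get $|R^{1D}_\varepsilon f(s)|\le 4C_{k1}Mf(s)$ uniformly in $\varepsilon$, then take the supremum. Your side remarks are also correct: the bound holds for every $s$ with $C=4$, and it can be read off from Lemma~\ref{lem:cutoff-equivalence} with $\chi_1=\mathbf 1_{(1,\infty)}$.
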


\begin{proof}
	By the definition of the residual term \eqref{eq:R1d-def},
	\[
	|T^{1D}_\varepsilon f(s)|
	\le
	|T^{1D}_{\varepsilon,\mathrm{sm}}f(s)|+|R^{1D}_\varepsilon f(s)|.
	\]
	Moreover, by \eqref{eq:k-size} and by the support of the factor
	$\mathbf{1}_{\{|s-t|>\varepsilon\}}-\chi\!\left(\frac{|s-t|}{\varepsilon}\right)$,
	\[
	|R^{1D}_\varepsilon f(s)|
	\le
	\int_{\varepsilon<|s-t|<2\varepsilon}\frac{C_{k1}}{|s-t|}\,|f(t)|\,\mathrm{d}t
	\le
	4C_{k1}\,Mf(s).
	\]
	Therefore,
	\[
	|T^{1D}_\varepsilon f(s)|
	\le
	|T^{1D}_{\varepsilon,\mathrm{sm}}f(s)|+4C_{k1}\,Mf(s).
	\]
	Taking the supremum over $\varepsilon>0$ gives \eqref{eq:hard-vs-soft-maximal-1d}.
\end{proof}

\subsection{Principal-value observations}\label{subsec:pv-maximal-pv}

\begin{remark}[On the existence of principal values]\label{rem:pv-existence}
	The maximal control recorded above does not by itself imply the existence of principal values for the hard truncations \eqref{eq:T-hard} or for the smooth truncations \eqref{eq:T-soft}. Principal-value convergence requires additional hypotheses, which must be specified at the point where such an assertion is invoked.
\end{remark}

\subsection{Output toward the geometric control of fibers}\label{subsec:pv-maximal-geometric-output}

The preceding observations show that the one-dimensional formulation used in the sparse transfer is stable with respect to the choice of cutoff and compatible with the maximal comparisons recorded in this section. In particular, the output obtained in Section~\ref{sec:sparse-1d} can be reincorporated into the original geometric problem without depending on a specific realization of the truncation.

By Corollary~\ref{cor:reduction-lebesgue}, the singular-integral part of the geometric problem has already been identified exactly with a one-dimensional bilinear form. The content of the present block is to record that admissible changes of cutoff and the passage from smooth truncations to hard truncations at the maximal level introduce only errors controlled by the Hardy--Littlewood maximal operator in the level variable.

Thus the remaining task becomes geometric-analytic again, now on the fibers of \(\theta\). The singular-integral component has already been fixed in the one-dimensional model; the next step is to exploit geometrically the fiber quantities introduced in the preliminaries and reassembled in Section~\ref{sec:reduction-1d}.

%% file: 27_pushforward_lp_bridge.tex
\section{The pushforward operator bridge in the Lebesgue layer}\label{sec:pushforward-lp-bridge}

Theorem~\ref{thm:exact-pushforward-reduction} performs the exact reduction of the truncated geometric operator to a one-dimensional form in the level variable, and Section~\ref{sec:sparse-1d} fixes the sparse input for that reduced form. This section isolates the intermediate step that converts pushforward information in the Lebesgue layer into an abstract boundedness criterion for the transferred operator. It is the operator-theoretic bridge between the exact reduction and the recomposition of Theorem~\ref{thm:main-recomposition}.

We shall work here under the hypothesis \(\nu_\theta\ll \mathrm{d}t\). In the notation of the preliminaries, \(Q_{\theta,h}\) denotes the relative pushforward density with respect to \(\nu_\theta\), and we write
\[
A_\theta h:=w_{\theta,h}=w_\theta\,Q_{\theta,h}.
\]

\subsection{The pushforward operator in the Lebesgue layer}\label{subsec:pushforward-leb-operator}

We now pass from the robust level formulated in terms of pushforward measures to its realization in the Lebesgue layer. The information transported by the exact reduction can be expressed through the operator
\[
h\longmapsto A_\theta h,
\]
defined on functions in the original space and taking values in the level variable.

\begin{definition}[Pushforward operator in the Lebesgue layer]\label{def:pushforward-leb-operator}
	Let \(\theta:\Omega\to\mathbb{R}\) be a measurable function, and assume that the pushforward measure \(\nu_\theta=\theta_\#(\mu\llcorner \Omega)\) is absolutely continuous with respect to Lebesgue measure, with density \(w_\theta\in L^1_{\mathrm{loc}}(\mathbb{R})\). For \(h\in L^1_{\mathrm{loc}}(\Omega,\mu)\), we define \(A_\theta h\) as the density with respect to Lebesgue measure of the signed pushforward measure
	\[
	\nu_{\theta,h}:=\theta_\#(h\,\mu\llcorner\Omega),
	\]
	whenever this measure is absolutely continuous.
\end{definition}

Definition~\ref{def:pushforward-leb-operator} allows us to rewrite the exact identity of Section~\ref{sec:reduction-1d} in operator form. Under the hypotheses in which \eqref{eq:Q-theta-h-lebesgue} and Remark~\ref{rem:Q-vs-M} hold, this definition coincides with
\[
A_\theta h(t)=w_{\theta,h}(t)=w_\theta(t)\,Q_{\theta,h}(t)
\]
for almost every \(t\) such that \(0<w_\theta(t)<\infty\). In particular, in the absolutely continuous regime, the operator \(A_\theta\) coincides with the usual pushforward density, and this density in turn factorizes into the geometric level density \(w_\theta\) and the relative density \(Q_{\theta,h}\).

\begin{remark}[Operator-theoretic reading in the Lebesgue layer]\label{rem:pushforward-leb-operator-well-defined}\label{rem:pushforward-operator-reduction}
	Under the hypothesis \(\nu_\theta\ll \mathrm{d}t\) fixed at the beginning of the section, Lemma~\ref{lem:nu-theta-h-ac} guarantees that, for every \(h\in L^1_{\mathrm{loc}}(\Omega,\mu)\), the weighted pushforward measure \(\nu_{\theta,h}\) is also absolutely continuous with respect to Lebesgue measure. Consequently, Definition~\ref{def:pushforward-leb-operator} is effective in the regime considered here. Moreover, under the hypotheses of Corollary~\ref{cor:reduction-lebesgue}, the identity \eqref{eq:theorem-A-lebesgue} can be rewritten as
	\[
		\langle T_\varepsilon f,g\rangle
		=
		\Lambda_\varepsilon\bigl(A_\psi f,A_\phi g\bigr).
	\]
	Thus, in the Lebesgue layer, the exact reduction is expressed entirely in terms of \(A_\psi f\) and \(A_\phi g\). The remaining step is to control these operators in natural Lebesgue spaces.
\end{remark}

\subsection{An abstract \texorpdfstring{\(L^p\)}{Lp} bridge from the control of \texorpdfstring{\(w_\theta\)}{w theta}}\label{subsec:pushforward-lp-criterion}

This subsection closes the operator bridge in the Lebesgue layer: a uniform bound for \(w_\theta\) yields \(L^p\) control of \(A_\theta\), and hence the effective input of Section~\ref{sec:sparse-1d} is reduced to a verifiable geometric condition on the pushforward density.

\begin{proposition}[Abstract \texorpdfstring{\(L^p\)}{Lp} bridge]\label{prop:pushforward-linfty-to-lp}
	Let \(1<p<\infty\). Assume that
	\[
	\nu_\theta\ll \mathrm{d}t,
	\qquad
	w_\theta\in L^\infty(\mathbb{R}).
	\]
	Then, for every function \(h\in L^1(\Omega)\cap L^p(\Omega)\), one has
	\begin{equation}\label{eq:pushforward-linfty-to-lp}
		\|A_\theta h\|_{L^p(\mathbb{R})}
		\le
		\|w_\theta\|_{L^\infty(\mathbb{R})}^{1/p'}\,
		\|h\|_{L^p(\Omega)}.
	\end{equation}
	Moreover, the map
	\[
	h\mapsto A_\theta h
	\]
	initially defined on \(L^1(\Omega)\cap L^p(\Omega)\), extends uniquely to a bounded linear operator
	\[
	A_\theta:L^p(\Omega)\to L^p(\mathbb{R}),
	\]
	which still satisfies \eqref{eq:pushforward-linfty-to-lp}.
\end{proposition}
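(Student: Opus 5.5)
The bound \eqref{eq:pushforward-linfty-to-lp} is a H\"older inequality on the fibres, carried out at the level of measures so that no coarea regularity is needed. First I would establish the pointwise domination
\[
|A_\theta h(t)|^p\le w_\theta(t)^{p-1}\,A_\theta(|h|^p)(t)\qquad\text{for a.e. }t.
\]
Here \(A_\theta(|h|^p)=w_{\theta,|h|^p}\) exists by Lemma~\ref{lem:nu-theta-h-ac}, since \(|h|^p\in L^1(\Omega)\) and \(\nu_\theta\ll\mathrm{d}t\).

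To prove the domination without using fibre integrals, I would test against an arbitrary bounded Borel set \(E\). Using \eqref{eq:def-nu-theta-h-borel} and H\"older on \(\theta^{-1}(E)\),
\[
\Bigl|\int_E A_\theta h\,\mathrm{d}t\Bigr|=\Bigl|\int_{\theta^{-1}(E)}h\,\mathrm{d}x\Bigr|\le|\theta^{-1}(E)|^{1/p'}\Bigl(\int_{\theta^{-1}(E)}|h|^p\Bigr)^{1/p}=\Bigl(\int_E w_\theta\Bigr)^{1/p'}\Bigl(\int_E A_\theta(|h|^p)\Bigr)^{1/p}.
\]
Take \(E=I\) a small interval centred at \(t\), divide by \(|I|\), and apply the Lebesgue differentiation theorem to the three locally integrable densities \(A_\theta h\), \(w_\theta\) and \(A_\theta(|h|^p)\). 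At every common Lebesgue point this gives \(|A_\theta h(t)|\le w_\theta(t)^{1/p'}A_\theta(|h|^p)(t)^{1/p}\).

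Next I would raise the domination to the power \(p\) and bound \(w_\theta^{p-1}\le\|w_\theta\|_\infty^{p/p'}\). Integrating in \(t\) and using the total mass identity \(\int_{\mathbb{R}}A_\theta(|h|^p)\,\mathrm{d}t=\nu_{\theta,|h|^p}(\mathbb{R})=\|h\|_{L^p(\Omega)}^p\) (the analogue of \eqref{eq:Mtheta-Lp}) gives
\[
\|A_\theta h\|_p^p\le\|w_\theta\|_\infty^{p/p'}\|h\|_p^p.
\]
Taking \(p\)-th roots yields \eqref{eq:pushforward-linfty-to-lp}.

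For the extension, I would use that \(A_\theta\) is linear on \(L^1\cap L^p\), because \(h\mapsto\nu_{\theta,h}\) is linear and Radon--Nikodym densities are unique a.e. Since \(L^1\cap L^p\) is dense in \(L^p(\Omega)\), the standard bounded-linear-extension (BLT) argument gives a unique continuous extension with the same norm.

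The main obstacle is the pointwise fibre H\"older step. The differentiation argument above avoids any coarea regularity, so the only thing to check is that \(w_\theta\in L^1_{\mathrm{loc}}\), which holds because \(w_\theta\in L^\infty\).
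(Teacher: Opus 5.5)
Your argument is correct. Its skeleton is the same as the paper's: a pointwise H\"older domination, integration using the total-mass identity, and extension by density. In fact your bound $|A_\theta h|\le w_\theta^{1/p'}\,(A_\theta|h|^p)^{1/p}$ is exactly the paper's $|A_\theta h|\le w_\theta^{1/p'}M_\theta h$, since $M_\theta h=w_{\theta,|h|^p}^{1/p}$ by \eqref{eq:Mtheta-def}.

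The two proofs differ in how that domination is justified.
\begin{itemize}
\item The paper writes $A_\theta h$ as the fiber integral \eqref{eq:wtheta-times-Mtilde} and applies H\"older on each $\Sigma_t$ with respect to $|\nabla\theta|^{-1}\,\mathrm{d}\mathcal H^{n-1}$, i.e.\ \eqref{eq:holder-fiber}, before invoking \eqref{eq:Mtheta-Lp-root}. This rests on the coarea identification \eqref{eq:wtheta-coarea}--\eqref{eq:wtheta-rho-def} of the pushforward densities with fiber integrals. That identification needs extra structure on $\theta$ beyond the stated hypotheses $\nu_\theta\ll\mathrm{d}t$, $w_\theta\in L^\infty$: Lipschitz regularity, and no contribution from $\{\nabla\theta=0\}$. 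It also requires a separate remark about the levels where $w_\theta(t)\in\{0,\infty\}$.
\item Your route is H\"older on $\theta^{-1}(E)$ followed by Lebesgue differentiation of the three $L^1_{\mathrm{loc}}$ densities. It uses only that $\theta$ is measurable with $\nu_\theta\ll\mathrm{d}t$, which is exactly the generality of Definition~\ref{def:pushforward-leb-operator}. It also handles the zero set of $w_\theta$ automatically: at a common Lebesgue point with $w_\theta(t)=0$, your inequality forces $A_\theta h(t)=0$.
\end{itemize}

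The paper's version keeps continuity with the fiber language ($w_\theta\widetilde M_\theta$, fiber averages) that is reused in the geometric recomposition of Sections~\ref{sec:fiber-analytic-objective}--\ref{sec:main-results}. Yours gives a self-contained proof under precisely the assumptions of the proposition.
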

\begin{proof}
	We first fix \(h\in L^1(\Omega)\cap L^p(\Omega)\). By the definition of \(A_\theta h\) and by \eqref{eq:wtheta-times-Mtilde},
	\[
	A_\theta h(t)=w_\theta(t)\,\widetilde M_\theta h(t)
	\]
	for almost every \(t\) with \(0<w_\theta(t)<\infty\). Applying \eqref{eq:holder-fiber}, we obtain
	\[
	|A_\theta h(t)|
	=
	\left|
	\int_{\Sigma_t}\frac{h(x)}{|\nabla\theta(x)|}\,\mathrm{d}\mathcal H^{n-1}(x)
	\right|
	\le
	w_\theta(t)^{1/p'}\,M_\theta h(t)
	\]
	for almost every such \(t\). At points where \(w_\theta(t)=0\) or \(w_\theta(t)=\infty\), the definition of \(\widetilde M_\theta\) gives no contribution. Consequently,
	\[
	|A_\theta h(t)|^p
	\le
	\|w_\theta\|_{L^\infty(\mathbb{R})}^{p/p'}\,|M_\theta h(t)|^p
	\]
	for almost every \(t\in\mathbb{R}\). Integrating and using \eqref{eq:Mtheta-Lp-root}, it follows that
	\[
	\|A_\theta h\|_{L^p(\mathbb{R})}
	\le
	\|w_\theta\|_{L^\infty(\mathbb{R})}^{1/p'}\,
	\|M_\theta h\|_{L^p(\mathbb{R})}
	=
	\|w_\theta\|_{L^\infty(\mathbb{R})}^{1/p'}\,
	\|h\|_{L^p(\Omega)}.
	\]
	This proves \eqref{eq:pushforward-linfty-to-lp} on \(L^1(\Omega)\cap L^p(\Omega)\).

	Linearity on this core is immediate from the linearity of the weighted pushforward measure:
	\[
	\nu_{\theta,\alpha h_1+\beta h_2}
	=
	\alpha\,\nu_{\theta,h_1}+\beta\,\nu_{\theta,h_2},
	\]
	and therefore
	\[
	A_\theta(\alpha h_1+\beta h_2)
	=
	\alpha\,A_\theta h_1+\beta\,A_\theta h_2
	\]
	almost everywhere.

	Since \(L^1(\Omega)\cap L^p(\Omega)\) is dense in \(L^p(\Omega)\), the bound \eqref{eq:pushforward-linfty-to-lp} allows one to extend \(A_\theta\) uniquely by continuity to all of \(L^p(\Omega)\), preserving the same operator norm. We continue to denote this extension by \(A_\theta\).
\end{proof}

For the application to Section~\ref{sec:sparse-1d}, we also record the \(L^\infty_c\) version.

\begin{corollary}[Bounded compactly supported inputs]\label{cor:pushforward-linfty-compact-support}
	Assume that
	\[
	\nu_\theta\ll \mathrm{d}t,
	\qquad
	w_\theta\in L^\infty(\mathbb{R}).
	\]
	Let \(h\in L^\infty(\Omega)\) have compact support. Then
	\begin{equation*}\label{eq:pushforward-linfty-compact-support}
		A_\theta h\in L^\infty_c(\mathbb{R})
		\qquad\text{and}\qquad
		\|A_\theta h\|_{L^\infty(\mathbb{R})}
		\le
		\|w_\theta\|_{L^\infty(\mathbb{R})}\,\|h\|_{L^\infty(\Omega)}.
	\end{equation*}
	Moreover,
	\[
	\operatorname{supp}(A_\theta h)\subset \theta(\operatorname{supp} h).
	\]
\end{corollary}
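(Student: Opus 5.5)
We prove the three assertions directly from the defining identity of $A_\theta h$ as the Lebesgue density of $\nu_{\theta,h}$. Since $h\in L^\infty(\Omega)$ has compact support, $h\in L^1(\Omega)\cap L^p(\Omega)$. Hence $A_\theta h$ is well defined by Remark~\ref{rem:pushforward-leb-operator-well-defined}, and the previous Proposition already places it in $L^p$. The content here is the sup bound and the support statement.

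\emph{Step 1: sup bound.} Rather than pass through the fiber H\"older inequality, we argue at the measure level, which avoids needing the coarea representation. For every Borel set $E\subset\mathbb{R}$, Lemma~\ref{lem:nu-theta-h-ac} gives
\[
\Bigl|\int_E A_\theta h\,\mathrm{d}t\Bigr|
=|\nu_{\theta,h}(E)|
\le \int_{\theta^{-1}(E)}|h|\,\mathrm{d}x
\le \|h\|_{L^\infty(\Omega)}\,\nu_\theta(E)
=\|h\|_{L^\infty(\Omega)}\int_E w_\theta\,\mathrm{d}t .
\]
We apply this to the sets $E$ on which $\Re(e^{-i\alpha}A_\theta h)>\|h\|_\infty w_\theta$, with $\alpha$ ranging over a countable dense set of angles. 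Each such set must have measure zero, so
\[
|A_\theta h|\le \|h\|_{L^\infty(\Omega)}\,w_\theta \quad\text{a.e.}
\]
Taking the supremum of $w_\theta$ gives the stated bound. Equivalently, one can invoke the Lebesgue differentiation theorem on the intervals $E=(t-r,t+r)$.

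\emph{Step 2: support.} Let $K=\operatorname{supp} h$, which is compact in $\Omega$. Here we need $\theta$ to be continuous on a neighbourhood of $K$, as it is in the geometric regimes of the paper where $\theta$ is at least Lipschitz. Under that assumption $\theta(K)$ is compact, hence closed. For any Borel set $E\subset\mathbb{R}\setminus\theta(K)$ we have $\theta^{-1}(E)\cap K=\emptyset$, so $\nu_{\theta,h}(E)=0$. Therefore $A_\theta h=0$ a.e.\ on the open set $\mathbb{R}\setminus\theta(K)$, which gives
\[
\operatorname{supp}(A_\theta h)\subset\theta(\operatorname{supp}h).
\]
Since $\theta(K)$ is bounded, $A_\theta h$ has compact support. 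Combined with Step~1, this yields $A_\theta h\in L^\infty_c(\mathbb{R})$.

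\emph{Main obstacle.} The delicate point is Step~2. Compactness of $\theta(\operatorname{supp}h)$ requires continuity of $\theta$ near the support, whereas the section only assumes $\theta$ measurable. If $\theta$ is merely measurable, we would replace $\theta(\operatorname{supp}h)$ by the closure of the essential image $\theta(K\cap\{h\neq0\})$. The argument of Step~2 still gives that $A_\theta h$ vanishes off this closed set. Boundedness of that set, however, then has to be assumed rather than derived, and we would state the continuity hypothesis explicitly to close this gap.
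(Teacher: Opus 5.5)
Your proof is correct. Step~2 is essentially the paper's argument: the paper also shows $\nu_{\theta,h}(E)=0$ for Borel $E$ disjoint from $\theta(\operatorname{supp} h)$, and it invokes continuity of $\theta$ ``in the geometric framework'' to get compactness. Your ordering is slightly more careful, since you first note that $\theta(K)$ is closed, so that vanishing a.e.\ on its open complement really gives the support inclusion.

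Step~1 takes a genuinely different route. The paper writes $A_\theta h(t)$ as the fiber integral \eqref{eq:wtheta-rho-def} and bounds it pointwise by $\|h\|_{L^\infty(\Omega)}$ times the fiber integral \eqref{eq:wtheta-coarea} for $w_\theta(t)$. You instead compare measures, $|\nu_{\theta,h}|\le\|h\|_{L^\infty(\Omega)}\,\nu_\theta$ by Lemma~\ref{lem:nu-theta-h-ac}, and pass to densities by a Radon--Nikodym/Lebesgue-differentiation argument.

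Your version uses only the stated hypotheses $\nu_\theta\ll\mathrm{d}t$ and $w_\theta\in L^\infty(\mathbb{R})$, and it works for merely measurable $\theta$. The paper's version is more concrete and matches the fiber language used later. However, it rests on the coarea identification of $w_\theta$ and $A_\theta h$ with fiber integrals. That identification presupposes a Lipschitz phase and, given $\nu_\theta\ll\mathrm{d}t$, a Lebesgue-null critical set $\{\nabla\theta=0\}$.

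Your caveat about continuity in Step~2 is well founded, and the hypothesis cannot be dropped. Take $\Omega=(-1,2)$, $\theta(x)=1/x$ for $x\neq 0$, and $h=\mathbf 1_{[0,1]}$. Then $w_\theta\le 4$, but $A_\theta h=t^{-2}\mathbf 1_{[1,\infty)}$, which is not compactly supported.
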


\begin{proof}
	By \eqref{eq:wtheta-rho-def},
	\[
	A_\theta h(t)
	=
	\int_{\Sigma_t}\frac{h(x)\,\mathbf 1_{\{|\nabla\theta|>0\}}(x)}{|\nabla\theta(x)|}\,\mathrm{d}\mathcal H^{n-1}(x)
	\]
	for almost every \(t\). Hence
	\[
	|A_\theta h(t)|
	\le
	\|h\|_{L^\infty(\Omega)}
	\int_{\Sigma_t}\frac{\mathbf 1_{\{|\nabla\theta|>0\}}(x)}{|\nabla\theta(x)|}\,\mathrm{d}\mathcal H^{n-1}(x)
	=
	\|h\|_{L^\infty(\Omega)}\,w_\theta(t),
	\]
	which gives the \(L^\infty\) bound.

	For the support, let \(E\subset\mathbb{R}\) be a Borel set such that
	\[
	E\cap \theta(\operatorname{supp} h)=\varnothing.
	\]
	Then \(\theta^{-1}(E)\cap \operatorname{supp} h=\varnothing\), and by the definition of \(\nu_{\theta,h}\),
	\[
	\nu_{\theta,h}(E)
	=
	\int_{\theta^{-1}(E)} h(x)\,\mathrm{d}x
	=
	0.
	\]
	This implies that \(A_\theta h=0\) almost everywhere on \(E\). Thus
	\[
	\operatorname{supp}(A_\theta h)\subset \theta(\operatorname{supp} h).
	\]
	Since \(\operatorname{supp} h\) is compact and \(\theta\) is continuous in the geometric framework of the manuscript, \(\theta(\operatorname{supp} h)\) is compact, and therefore \(A_\theta h\in L^\infty_c(\mathbb{R})\).
\end{proof}

\begin{corollary}[Structural input for the sparse transfer]\label{cor:pushforward-structural-sparse-input}
	Assume that
	\[
	\nu_\psi\ll \mathrm{d}t,
	\qquad
	\nu_\phi\ll \mathrm{d}s,
	\qquad
	w_\psi\in L^\infty(\mathbb{R}),
	\qquad
	w_\phi\in L^\infty(\mathbb{R}).
	\]
	Then, for every pair of bounded, compactly supported, complex-valued functions \(f,g\) on \(\Omega\),
	\[
	A_\psi f\in L^\infty_c(\mathbb{R}),
	\qquad
	A_\phi g\in L^\infty_c(\mathbb{R}).
	\]
	In particular, the effective hypothesis for applying Corollary~\ref{cor:sparse-transfer-Teps} is automatically satisfied, and for every \(\varepsilon>0\) there exists a sparse family \(\mathcal S_{\varepsilon,f,g}\) of intervals such that
	\begin{equation}\label{eq:pushforward-structural-sparse-input}
		\bigl|\langle T_\varepsilon f,g\rangle\bigr|
		\lesssim
		\sum_{I\in\mathcal S_{\varepsilon,f,g}}
		\langle |A_\psi f|\rangle_I\,
		\langle |A_\phi g|\rangle_I\,|I|.
	\end{equation}
\end{corollary}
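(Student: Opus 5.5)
The argument composes two results already in hand. Corollary~\ref{cor:pushforward-linfty-compact-support} supplies the membership in $L^\infty_c$. Corollary~\ref{cor:sparse-transfer-Teps} then supplies the sparse bound, once $w_{\psi,f}$ and $w_{\phi,g}$ are identified with $A_\psi f$ and $A_\phi g$.

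First I apply Corollary~\ref{cor:pushforward-linfty-compact-support} twice. Take $\theta=\psi$ and $h=f$ on $\Omega_y$, using $\nu_\psi\ll\mathrm{d}t$ and $w_\psi\in L^\infty(\mathbb{R})$. Then take $\theta=\phi$ and $h=g$ on $\Omega_x$. Since $f$ and $g$ are bounded with compact support, this gives
\[
A_\psi f\in L^\infty_c(\mathbb{R}),\qquad
\|A_\psi f\|_{L^\infty}\le \|w_\psi\|_{L^\infty}\|f\|_{L^\infty},
\]
and the analogous statement for $A_\phi g$. The compact support comes from $\operatorname{supp}(A_\theta h)\subset\theta(\operatorname{supp}h)$ together with continuity of the phases in the geometric framework. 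The $L^\infty$ bound is the pointwise estimate $|A_\theta h|\le\|h\|_\infty w_\theta$.

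One point needs care: the pointwise estimate in that corollary was stated through the coarea formula \eqref{eq:wtheta-rho-def}. I would rather justify it at the measure level, which avoids any coarea assumption. By \eqref{eq:def-nu-theta-h}, $|\nu_{\theta,h}|(E)\le\|h\|_\infty\,\nu_\theta(E)=\|h\|_\infty\int_E w_\theta\,\mathrm{d}t$ for every Borel $E$. Hence the density satisfies $|A_\theta h|\le \|h\|_\infty w_\theta$ almost everywhere. This is the step I would check most carefully, together with the continuity of $\theta$ needed for compactness of $\theta(\operatorname{supp}h)$.

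Next I identify the objects. By Definition~\ref{def:pushforward-leb-operator}, $A_\psi f=w_{\psi,f}$ and $A_\phi g=w_{\phi,g}$ almost everywhere, as in Remark~\ref{rem:pushforward-operator-reduction}. So the effective hypothesis $w_{\psi,f},w_{\phi,g}\in L^\infty_c(\mathbb{R})$ of Corollary~\ref{cor:sparse-transfer-Teps} holds.

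The remaining hypotheses of that corollary are those of Corollary~\ref{cor:reduction-lebesgue}, namely absolute continuity of $\nu_\psi$ and $\nu_\phi$ and (Hk1), plus (Hk) for $k$. These are understood as standing assumptions of the sparse transfer. Corollary~\ref{cor:sparse-transfer-Teps} then produces, for each $\varepsilon>0$, a sparse family $\mathcal S_{\varepsilon,f,g}$ satisfying \eqref{eq:sparse-domination-Teps}. Rewriting $w_{\psi,f}$ as $A_\psi f$ and $w_{\phi,g}$ as $A_\phi g$ gives \eqref{eq:pushforward-structural-sparse-input}.

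The implicit constant depends only on the one-dimensional parameters. The bounds on $w_\psi$ and $w_\phi$ enter only qualitatively, to guarantee membership in $L^\infty_c$, and do not appear in the constant.
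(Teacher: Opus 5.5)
Your proposal is correct and follows the paper's proof. Like the paper, you apply Corollary~\ref{cor:pushforward-linfty-compact-support} with $\theta=\psi$ and with $\theta=\phi$, identify $A_\psi f=w_{\psi,f}$ and $A_\phi g=w_{\phi,g}$, and then invoke Corollary~\ref{cor:sparse-transfer-Teps}. Your measure-level derivation of $|A_\theta h|\le\|h\|_{L^\infty}\,w_\theta$ from $|\nu_{\theta,h}|(E)\le\|h\|_{L^\infty}\,\nu_\theta(E)$ is a slightly more robust substitute for the coarea-based bound in the cited corollary, since it uses only $\nu_\theta\ll\mathrm{d}t$.
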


\begin{proof}
The first assertion is a direct application of Corollary~\ref{cor:pushforward-linfty-compact-support} with \(\theta=\psi\) and \(\theta=\phi\). Consequently, the effective hypothesis of Corollary~\ref{cor:sparse-transfer-Teps} is verified for \(A_\psi f\) and \(A_\phi g\), and \eqref{eq:pushforward-structural-sparse-input} is precisely its rewriting in the notation of this section.
\end{proof}

It remains to verify geometrically the condition
\[
w_\theta\in L^\infty(\mathbb{R})
\]
in concrete regimes.

%% file: 30_geom_abs-sio.tex
\section{Local recomposition in fiber language}\label{sec:fiber-analytic-objective}

This section begins the geometric part of the recomposition in Theorem~\ref{thm:main-recomposition}, using the fiber language fixed in the preliminaries. We rewrite the transferred output in terms of the expressions
\[
w_\theta\,\widetilde M_\theta h,
\]
record a first abstract closure principle based on fiber control, and obtain a local output in the uniform regime on level intervals.

\subsection{Formulation of the recomposition in fiber language}\label{subsec:fiber-analytic-formulation}

By the exact identity in the Lebesgue layer and the operator-theoretic reading of Section~\ref{sec:pushforward-lp-bridge},
\[
\langle T_\varepsilon f,g\rangle
=
\Lambda_\varepsilon\bigl(A_\psi f,A_\phi g\bigr).
\]
Under the dictionary fixed in the preliminaries,
\[
A_\theta h
=
w_\theta\,\widetilde M_\theta h
\]
whenever \(0<w_\theta<\infty\). Hence the geometric recomposition is formulated in terms of the weighted fiber expressions associated with \(\psi\) and \(\phi\).

\begin{proposition}[Closure principle by fiber control]\label{prop:fiber-control-suffices}
	Assume that, for some exponent \(1\le p_0<\infty\), there exists a constant \(C_{\mathrm{fib}}>0\) such that, for every measurable function \(h\) on \(\Omega\),
	\begin{equation*}\label{eq:fiber-control-bilinear}
		\|w_\theta\,\widetilde M_\theta h\|_{L^{p_0}(\mathbb{R})}
		\le
		C_{\mathrm{fib}}\,
		\|h\|_{L^{p_0}(\Omega)}
	\end{equation*}
	for \(\theta=\phi\) and for \(\theta=\psi\).
	Then every bilinear estimate for the one-dimensional model formulated in terms of
	\[
	F=A_\psi f,
	\qquad
	G=A_\phi g
	\]
	transfers immediately to the truncated geometric form \(\langle T_\varepsilon f,g\rangle\), with constant controlled by \(C_{\mathrm{fib}}\) and by the constants of the corresponding one-dimensional result.
\end{proposition}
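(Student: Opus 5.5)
The plan is to make the implication literal by composing three identifications already available in the paper: the exact Lebesgue reduction, the dictionary between $A_\theta$ and the weighted fiber average, and the assumed fiber bound. The steps are as follows.

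First, by Remark~\ref{rem:pushforward-operator-reduction} (that is, by Corollary~\ref{cor:reduction-lebesgue} rewritten with Definition~\ref{def:pushforward-leb-operator}), we have for admissible $f,g$
\[
\langle T_\varepsilon f,g\rangle=\Lambda_\varepsilon(A_\psi f,A_\phi g).
\]
Second, by Remark~\ref{rem:Q-vs-M} together with \eqref{eq:wtheta-times-Mtilde}, we identify $A_\theta h=w_\theta\,\widetilde M_\theta h$ almost everywhere on $\{0<w_\theta<\infty\}$. I need to check that the complementary set contributes nothing. On $\{w_\theta=0\}$, Lemma~\ref{lem:nu-theta-h-ac} gives $|\nu_{\theta,h}|\le\nu_{\theta,|h|}\ll\nu_\theta$, so $A_\theta h=0$ almost everywhere there. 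The set $\{w_\theta=\infty\}$ is Lebesgue-null because $w_\theta\in L^1_{\mathrm{loc}}$. Hence $\|A_\theta h\|_{L^{p_0}(\mathbb{R})}=\|w_\theta\widetilde M_\theta h\|_{L^{p_0}(\mathbb{R})}\le C_{\mathrm{fib}}\|h\|_{L^{p_0}(\Omega)}$ for $\theta=\phi,\psi$.

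Next I make precise what "bilinear estimate for the one-dimensional model formulated in terms of $F,G$" means. It is a bound
\[
|\Lambda_\varepsilon(F,G)|\le C_{1D}\,\mathcal N_1(F)\,\mathcal N_2(G),
\]
where each $\mathcal N_j$ is a functional dominated by the $L^{p_0}$ norm on the relevant class. The prototype is the $L^r\times L^{r'}$ bound obtained from Theorem~\ref{thm:sparse-domination-1d} and Lemma~\ref{lem:sparse-form-Lp}. Substituting $F=A_\psi f$ and $G=A_\phi g$ and inserting the fiber bound yields
\[
|\langle T_\varepsilon f,g\rangle|\le C_{1D}\,C_{\mathrm{fib}}^2\,\|f\|_{L^{p_0}(\Omega_y)}\|g\|_{L^{p_0}(\Omega_x)}.
\]
If the one-dimensional estimate uses conjugate exponents $r,r'$, I apply the hypothesis with $p_0=r$ for $\psi$ and with the conjugate exponent for $\phi$. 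I read the statement as allowing this, with $C_{\mathrm{fib}}$ taken as the maximum of the constants.

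The main obstacle is the admissibility of the inputs. Corollary~\ref{cor:sparse-transfer-Teps} requires $A_\psi f,A_\phi g\in L^\infty_c(\mathbb{R})$, and the reduction requires $f,g$ bounded with compact support. My approach is to state the transfer first for $f,g\in L^\infty_c$ whose pushforward densities lie in $L^\infty_c$, in the spirit of Corollary~\ref{cor:pushforward-structural-sparse-input}. Then, if desired, I extend by density: the resulting bound is linear in each argument and continuous in $L^{p_0}$, so it passes to limits. For fixed $\varepsilon$ the truncated form is itself continuous in these norms, by the absolute convergence shown in Step~1 of Theorem~\ref{thm:exact-pushforward-reduction}. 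I expect the delicate point to be phrasing the statement so that "transfers immediately" refers exactly to this class of inputs, rather than any additional analysis.
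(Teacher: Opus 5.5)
Your proposal is correct and follows the paper's own argument: the paper's proof is just the combination of the Lebesgue-layer identity $\langle T_\varepsilon f,g\rangle=\Lambda_\varepsilon(A_\psi f,A_\phi g)$ with the identification $A_\theta h=w_\theta\,\widetilde M_\theta h$. Your additional checks are refinements the paper leaves implicit: the sets $\{w_\theta=0\}$ and $\{w_\theta=\infty\}$, the $r$ versus $r'$ bookkeeping that a single $p_0$ does not literally cover, and the admissible class of inputs.

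One small caveat concerns your optional density step. Step~1 of Theorem~\ref{thm:exact-pushforward-reduction} only gives $|\langle T_\varepsilon f,g\rangle|\le (C_{k1}/\varepsilon)\|f\|_{L^1}\|g\|_{L^1}$, so continuity in $L^{p_0}$ norms follows from it only on finite-measure domains. Otherwise the extension should simply be defined through the transferred bound itself.
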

\begin{proof}
	The assertion is a direct consequence of
	\[
	\langle T_\varepsilon f,g\rangle
	=
	\Lambda_\varepsilon\bigl(A_\psi f,A_\phi g\bigr)
	\]
	and of the identification
	\[
	A_\theta h=w_\theta\,\widetilde M_\theta h.
	\]
\end{proof}

\begin{remark}[Local scope of this section]\label{rem:fiber-analytic-roadmap}
	The output obtained here is local in the level parameter and will serve as an input for the modular recomposition of Section~\ref{sec:main-results}.
\end{remark}

\subsection{Local uniform regime and stable fiber control}\label{subsec:fiber-uniform-submersion}

The first regime in which the local recomposition becomes effective is the one where the geometry of the level sets remains quantitatively stable on a level interval. In this context, the weighted expression
\[
w_\theta\,\widetilde M_\theta h
\]
admits robust control in Lebesgue spaces.

This subsection records precisely that positive local mechanism. Its role is not yet to close the global recomposition, but rather to isolate the geometric block that will later feed the uniform consequence in Section~\ref{sec:main-results}.

\begin{corollary}[Uniform submersion on \texorpdfstring{$I_0$}{I0} \(\Longrightarrow\) local weighted control]\label{cor:wtheta-submersion-uniform}
	Assume that $\theta$ satisfies \textbf{(H1)} and \textbf{(H2)} on an open interval $I_0\subset\mathbb{R}$, so that the local representation \eqref{eq:wtheta-rho-chart} is available for almost every $t\in I_0$.
	Assume moreover that there exists a constant $C_{\mathrm{sub}}(I_0)>0$ such that
	\[
	w_\theta(t)\le C_{\mathrm{sub}}(I_0)
	\qquad\text{for almost every }t\in I_0.
	\]

	Then, for every $1<r<\infty$ and every $f\in L^r(\Omega)$,
	\begin{equation*}\label{eq:submersion-uniform-weighted}
		\|w_\theta\,\widetilde M_\theta f\|_{L^r(I_0)}
		\le
		C_{\mathrm{sub}}(I_0)^{1/r'}\,\|f\|_{L^r(\Omega)}.
	\end{equation*}
\end{corollary}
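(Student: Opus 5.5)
The plan is to repeat the argument of Proposition~\ref{prop:pushforward-linfty-to-lp}, but on the level interval $I_0$ only, using the local bound on $w_\theta$ in place of a global $L^\infty$ bound. Under \textbf{(H1)}, the gradient satisfies $|\nabla\theta|\ge c_0>0$ on the tube $\{\theta\in I_0\}$. Hence the indicator $\mathbf 1_{\{|\nabla\theta|>0\}}$ equals $1$ there. By the coarea identities \eqref{eq:wtheta-coarea}--\eqref{eq:wtheta-rho-def}, made concrete by the chart formula \eqref{eq:wtheta-rho-chart} under \textbf{(H2)}, the densities $w_\theta$, $w_{\theta,|f|^r}$ and the fiber integral of $f/|\nabla\theta|$ are then given by genuine fiber integrals for a.e.\ $t\in I_0$.

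\emph{Step 1 (pointwise bound).} For a.e.\ $t\in I_0$ with $0<w_\theta(t)<\infty$, identity \eqref{eq:wtheta-times-Mtilde} writes $w_\theta\widetilde M_\theta f(t)$ as the fiber integral of $f/|\nabla\theta|$. Applying Hölder's inequality on the fiber, \eqref{eq:holder-fiber}, gives
\[
|w_\theta(t)\,\widetilde M_\theta f(t)|\le w_\theta(t)^{1/r'}\,M_\theta f(t)\le C_{\mathrm{sub}}(I_0)^{1/r'}M_\theta f(t).
\]
At points where $w_\theta(t)\in\{0,\infty\}$, the definition \eqref{eq:normalized-average} sets $\widetilde M_\theta f=0$, so the bound holds trivially there. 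The case $w_\theta=\infty$ is in any case a null set in $I_0$ by hypothesis.

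\emph{Step 2 (integration).} We raise the pointwise bound to the power $r$ and integrate over $I_0$. We then enlarge the domain of integration to $\mathbb{R}$, and the change-of-variables identity \eqref{eq:Mtheta-Lp} gives
\[
\int_{I_0}|M_\theta f|^r\,\mathrm{d}t\le\int_{\mathbb R}w_{\theta,|f|^r}\,\mathrm{d}t=\int_{\Omega\cap\{|\nabla\theta|>0\}}|f|^r\,\mathrm{d}x\le\|f\|_{L^r(\Omega)}^r.
\]
More sharply, by \eqref{eq:coarea-operational-form} the left-hand side equals $\int_{\{\theta\in I_0\}}|f|^r\,\mathrm{d}x$. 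Taking $r$-th roots yields the claimed estimate with constant $C_{\mathrm{sub}}(I_0)^{1/r'}$.

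The main obstacle is justification rather than estimation. For general $f\in L^r(\Omega)$, the fiber integral of $f/|\nabla\theta|$ need not be absolutely convergent for a.e.\ $t$. We handle this with the fiber Hölder inequality itself: $|f|$ is integrable on a.e.\ fiber against $\mathcal H^{n-1}/|\nabla\theta|$, since its integral is at most $w_\theta^{1/r'}M_\theta f<\infty$ a.e.\ in $I_0$. Here $M_\theta f\in L^r$ by Step~2 applied to $|f|$, and $w_\theta\le C_{\mathrm{sub}}(I_0)$. If one prefers to avoid this, the alternative is to prove the estimate first for $f\in L^1\cap L^r$, where $w_\theta\widetilde M_\theta f=A_\theta f$ is well defined by Lemma~\ref{lem:nu-theta-h-ac}, and then extend by density.
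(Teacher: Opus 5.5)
Your proposal is correct and is essentially the paper's argument: fiber H\"older \eqref{eq:holder-fiber}, the bound $w_\theta\le C_{\mathrm{sub}}(I_0)$ a.e.\ on $I_0$, integration over $I_0$, enlargement to $\mathbb{R}$, and the isometry \eqref{eq:Mtheta-Lp-root}. The differences are minor. The paper phrases the pointwise step through $A_\theta f=w_\theta\widetilde M_\theta f$, whereas you work directly with the fiber expression \eqref{eq:wtheta-times-Mtilde} and add an a.e.\ absolute-convergence justification, which also avoids the question of whether $A_\theta f$ is defined for $f\in L^r\setminus L^1$.
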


\begin{proof}
	By the identification
	\[
	A_\theta f=w_\theta\,\widetilde M_\theta f
	\]
	almost everywhere on \(I_0\), it is enough to estimate \(A_\theta f\) in \(L^r(I_0)\). For almost every \(t\in I_0\), H\"older's inequality on the fiber, \eqref{eq:holder-fiber}, gives
	\[
	|A_\theta f(t)|
	=
	\left|
	\int_{\Sigma_t}\frac{f(x)}{|\nabla\theta(x)|}\,\mathrm{d}\mathcal H^{n-1}(x)
	\right|
	\le
	w_\theta(t)^{1/r'}\,M_\theta f(t).
	\]
	Since \(w_\theta(t)\le C_{\mathrm{sub}}(I_0)\) for almost every \(t\in I_0\), it follows that
	\[
	|A_\theta f(t)|
	\le
	C_{\mathrm{sub}}(I_0)^{1/r'}\,M_\theta f(t)
	\qquad\text{for almost every }t\in I_0.
	\]
	Raising to the power \(r\), integrating on \(I_0\), and using \eqref{eq:Mtheta-Lp-root}, we obtain
	\begin{align*}
	\|w_\theta\,\widetilde M_\theta f\|_{L^r(I_0)}
	=
	\|A_\theta f\|_{L^r(I_0)}\le
	C_{\mathrm{sub}}(I_0)^{1/r'}\,\|M_\theta f\|_{L^r(I_0)} \le
	C_{\mathrm{sub}}(I_0)^{1/r'}\,& \|M_\theta f\|_{L^r(\mathbb{R})}\\
	& =
	C_{\mathrm{sub}}(I_0)^{1/r'}\,\|f\|_{L^r(\Omega)}.
	\end{align*}
\end{proof}

\begin{remark}[From local to global]\label{rem:wtheta-uniform-globalization}
	The preceding corollary is local in the level parameter: \textbf{(H1)} and \textbf{(H2)} were formulated in a tube \(I_0\), and therefore do not by themselves yield a global conclusion on all of \(\theta(\Omega)\). To obtain a global output of the form
	\[
	\|w_\theta\,\widetilde M_\theta f\|_{L^r(\mathbb{R})}
	\lesssim
	\|f\|_{L^r(\Omega)},
	\]
	one needs an additional finite-covering hypothesis for \(\theta(\Omega)\) by level intervals on which the uniform trivialization is available with controlled constants. That hypothesis is introduced in Section~\ref{sec:main-results}.
\end{remark}

\subsection{Quantified non-degeneracy and design of favorable phases}\label{subsec:quantified-nondegeneracy-design}

A natural way to build phases adapted to the reduction--recomposition scheme is to impose a differential lower bound of the form
\[
|\nabla \theta(x)|\ge \Gamma(\theta(x)),
\]
where \(\Gamma\) prescribes the minimal separation between level sets. This condition weakens the classical assumption of strict uniform submersion and translates the geometric control of the phase into a scalar profile on the level space.

Under this hypothesis, the coarea density satisfies
\[
w_\theta(t)
=
\int_{\Sigma_t}\frac{1}{|\nabla\theta|}\,\mathrm{d}\mathcal H^{n-1}
\le
\frac{\mathcal H^{n-1}(\Sigma_t)}{\Gamma(t)},
\qquad
\Sigma_t:=\{x\in\Omega:\theta(x)=t\},
\]
so the singular coarea factor splits into a geometric contribution, \(\mathcal H^{n-1}(\Sigma_t)\), and a scalar design contribution, \(\Gamma(t)\).

A convenient way to generate this type of phase is by reparametrization. Let \(\rho:\Omega\to I\subset\mathbb R\) be a Lipschitz phase satisfying
\[
|\nabla \rho(x)|\ge m(\rho(x))
\]
for some nonnegative function \(m\). Given a monotone function \(H:I\to J\) of class \(C^1\), define
\[
\theta(x):=H(\rho(x)).
\]
Then
\[
\nabla \theta(x)=H'(\rho(x))\,\nabla \rho(x),
\]
and hence
\[
|\nabla\theta(x)|
=
|H'(\rho(x))|\,|\nabla\rho(x)|
\ge
|H'(\rho(x))|\,m(\rho(x)).
\]
If \(H\) is invertible, this can be rewritten in terms of levels as
\[
|\nabla\theta(x)|
\ge
|H'(H^{-1}(\theta(x)))|\,m(H^{-1}(\theta(x))),
\]
so the effective design function is
\[
\Gamma(t):=
|H'(H^{-1}(t))|\,m(H^{-1}(t)).
\]

Conversely, if one wants to prescribe a target lower bound
\[
|\nabla\theta|\ge \Gamma(\theta),
\]
it is enough to choose \(H\) as a solution of the scalar ODE
\[
H'(s)\,m(s)=\Gamma(H(s)).
\]
In particular, if \(|\nabla\rho|=1\) almost everywhere, then \(m\equiv 1\) and the design equation reduces to
\[
H'(s)=\Gamma(H(s)).
\]

This produces several natural families of phases favorable to the uniform regime.

\paragraph{(i) Transverse coordinates.}
If \(\rho\) is a transverse coordinate with \(|\nabla\rho|=1\), for instance a signed distance in a tubular neighborhood or a vertical coordinate in a product-type region, then every phase of the form
\[
\theta=H\circ \rho
\]
satisfies
\[
|\nabla\theta|=|H'(\rho)|.
\]
Thus the lower bound \(|\nabla\theta|\ge \Gamma(\theta)\) is guaranteed by solving
\[
H'(s)=\Gamma(H(s)).
\]

\paragraph{(ii) Distance-to-the-boundary phases.}
If \(\rho(x)=d(x,\partial\Omega)\), then \(|\nabla\rho|=1\) almost everywhere, and therefore
\[
\theta(x)=H(d(x,\partial\Omega))
\]
again reduces the design problem to the scalar ODE above. This is particularly useful for constructing families with quantitative control near a boundary level without imposing, from the outset, a rigid uniform submersion on the whole image.

\paragraph{(iii) Radial phases.}
If \(\rho(x)=|x|\), then \(|\nabla\rho|=1\) away from the origin. Hence radial phases
\[
\theta(x)=H(|x|)
\]
fall into the same class. In this case, the geometry of the level sets is explicit, and both \(\mathcal H^{n-1}(\Sigma_t)\) and \(w_\theta(t)\) can often be computed by closed formulas. This class will later serve as a concrete laboratory in Section~\ref{sec:examples-regimes}.

\paragraph{(iv) General base phases.}
More generally, any Lipschitz phase \(\rho\) satisfying a lower bound
\[
|\nabla\rho|\ge m(\rho)
\]
can be corrected by reparametrization. The choice of \(H\) then solves
\[
H'(s)=\frac{\Gamma(H(s))}{m(s)}.
\]
This provides a flexible mechanism for improving a given phase while preserving the geometry of its levels.

\begin{remark}[Uniform profile and critical transition]\label{rem:gamma-design-uniform}\label{rem:gamma-design-critical-transition}
	If
	\[
	\Gamma(t)\ge c>0,
	\]
	then
	\[
	|\nabla\theta|\ge c,
	\qquad
	w_\theta(t)\le \frac{\mathcal H^{n-1}(\Sigma_t)}{c}.
	\]
	In particular, when the geometry of the fibers remains quantitatively controlled on a level interval, the condition \(|\nabla\theta|\ge \Gamma(\theta)\) recovers the local uniform regime of Corollary~\ref{cor:wtheta-submersion-uniform}. When \(\Gamma\) is no longer uniformly separated from zero, the splitting
	\[
	w_\theta(t)\le \frac{\mathcal H^{n-1}(\Sigma_t)}{\Gamma(t)}
	\]
	shows how degeneration in the separation between levels may be reflected in the pushforward density. This is the transition toward the critical regime of Section~\ref{sec:fiber-critical-regime}.
\end{remark}

\begin{remark}[Analytic source of critical profiles]\label{rem:gamma-lojasiewicz-source}
	The design scheme based on profiles \(\Gamma\) has a natural analytic source. In broad analytic classes, inequalities of \L{}ojasiewicz type provide quantitative lower bounds for the gradient near critical values; see, for instance, \cite{Chill2003,NguyenPham2022}. More precisely, if \(\theta\) is real analytic and \(x_0\) is a critical point with \(t_0=\theta(x_0)\), then there exist a neighborhood of \(x_0\), a constant \(C>0\), and an exponent \(\alpha\in[0,1)\) such that
	\[
	|\nabla \theta(x)| \ge C\,|\theta(x)-t_0|^{\alpha}.
	\]
	In the language of this section, this means that profiles of the form
	\[
	\Gamma(t)\sim |t-t_0|^{\alpha}
	\]
	arise naturally as quantitative laws of non-degeneracy. Combined with geometric control of the fibers, they provide a canonical source for the abstract critical profiles of Section~\ref{sec:fiber-critical-regime}.
\end{remark}

\paragraph{Taxonomy of phases and transition of regimes.}
The condition
\[
|\nabla \theta(x)| \ge \Gamma(\theta(x))
\]
distinguishes, at the design level, two behaviors. If \(\Gamma\) remains uniformly separated from zero on the level interval under consideration, one recovers the local uniform regime. When \(\Gamma\) weakens or degenerates near one or more levels, the pushforward density may develop singularities compatible with the critical regime.

%% file: 35_geom_abs-sio.tex
\section{Critical regime: localization and pullback weights}\label{sec:fiber-critical-regime}

This section isolates the output corresponding to the critical regime within the geometric recomposition of Theorem~\ref{thm:main-recomposition}. Unlike the uniform case of Section~\ref{sec:fiber-analytic-objective}, the loss of geometric stability near critical values no longer allows, in general, the fiber formulation to be closed by an unweighted estimate on the level space.

The central object is the pushforward density \(w_\theta\) associated with the phase \(\theta\). When quantitative non-degeneracy weakens near certain levels, \(w_\theta\) may develop singularities localized around the critical set \(V_\theta\). The purpose of this section is to record, in an abstract framework, that such singularities still admit a useful recomposition if one works with localization in the level space and with a pullback weight on the input space.

We first fix an abstract blow-up profile for \(w_\theta\) near \(V_\theta\) and the range of exponents for which this profile remains locally integrable. We then translate this information into localized estimates for \(w_\theta\,\widetilde M_\theta\). Thus, the critical regime enters the general architecture of the manuscript as a localized and weighted output, rather than as a uniform continuation of the previous case.

\subsection{Blow-up profile near the critical values}\label{subsec:critical-blowup-profile}

We now fix the minimal abstract hypothesis of the critical regime. We assume:
\begin{enumerate}
	\item the set $V_\theta\subset\mathbb{R}$ is finite;
	\item $w_\theta\in L^\infty_{\mathrm{loc}}(\mathbb{R}\setminus V_\theta)$;
	\item there exist $\delta_0>0$, a constant $C_{\theta,\beta}>0$, and an exponent $\beta\in[0,1)$ such that
	\begin{equation}\label{eq:wtheta-critical-blowup}
		w_\theta(t)
		\le
		C_{\theta,\beta}\,\operatorname{dist}(t,V_\theta)^{-\beta}
		\qquad
		\text{for almost every }t\text{ with }\operatorname{dist}(t,V_\theta)<\delta_0.
	\end{equation}
\end{enumerate}

The following proposition records the local integrability consequence of \eqref{eq:wtheta-critical-blowup}.

\begin{proposition}[Integrability of the critical profile]\label{prop:wtheta-critical-integrability}
	Assume that $V_\theta$ is finite and that \eqref{eq:wtheta-critical-blowup} holds for some $\beta\in[0,1)$.
	Then, for every exponent
	\begin{equation}\label{eq:wtheta-critical-integrability-range}
		1\le a<\frac{1}{\beta}
		\qquad\text{(convention: }1/0=\infty\text{)},
	\end{equation}
	one has
	\[
	w_\theta\in L^a_{\mathrm{loc}}
	\bigl(\{t\in\mathbb{R}:\operatorname{dist}(t,V_\theta)<\delta_0\}\bigr).
	\]
	In particular, for every
	\[
	1<r<1+\frac{1}{\beta},
	\]
	the function $w_\theta^{\,r-1}$ is locally integrable in a neighborhood of $V_\theta$.
\end{proposition}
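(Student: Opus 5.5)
The plan is to reduce the statement to integrability of a single power singularity near each critical value. Write $V_\theta=\{t_1,\dots,t_N\}$; finiteness is used only to split into finitely many such pieces. Set $U_{\delta_0}:=\{t:\operatorname{dist}(t,V_\theta)<\delta_0\}$, which is a finite union of open intervals. Local integrability on $U_{\delta_0}$ means integrability over every compact $K\subset U_{\delta_0}$. Since every such $K$ is contained in $U_{\delta_0}$ itself, which is bounded, it suffices to prove the stronger statement $w_\theta\in L^a(U_{\delta_0})$.

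\textbf{The main estimate.} For almost every $t\in U_{\delta_0}$, the bound \eqref{eq:wtheta-critical-blowup} gives $w_\theta(t)^a\le C_{\theta,\beta}^a\operatorname{dist}(t,V_\theta)^{-a\beta}$. On the other hand, $\operatorname{dist}(t,V_\theta)=\min_j|t-t_j|$, so
\[
\operatorname{dist}(t,V_\theta)^{-a\beta}\le\sum_{j=1}^N|t-t_j|^{-a\beta}.
\]
Each term satisfies
\[
\int_{|t-t_j|<\delta_0}|t-t_j|^{-a\beta}\,\mathrm{d}t=\frac{2\,\delta_0^{1-a\beta}}{1-a\beta}<\infty,
\]
which is finite precisely because $a\beta<1$, i.e.\ $a<1/\beta$. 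Combining the two displays and summing over $j$ yields
\[
\int_{U_{\delta_0}}w_\theta^a\,\mathrm{d}t\le C_{\theta,\beta}^a\sum_{j=1}^N\frac{2\,\delta_0^{1-a\beta}}{1-a\beta}<\infty.
\]
If $\beta=0$, the integrand is simply bounded and every $a\ge1$ is admissible, which matches the convention $1/0=\infty$.

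\textbf{The consequence for $w_\theta^{\,r-1}$.} Given $1<r<1+1/\beta$, set $a:=r-1$, so that $0<a<1/\beta$. When $a\ge1$, the previous paragraph applies directly. When $0<a<1$, I use that $U_{\delta_0}$ has finite measure, so Jensen's (or Hölder's) inequality gives $w_\theta^{\,a}\in L^1(U_{\delta_0})$ from $w_\theta\in L^1(U_{\delta_0})$. Alternatively, the same power computation with exponent $a\beta<1$ gives this directly. Either route shows that $w_\theta^{\,r-1}$ is locally integrable in a neighborhood of $V_\theta$.

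The argument has no serious obstacle. The only care needed is to read "local" correctly: it refers to the bounded set $U_{\delta_0}$, and the estimate uses \eqref{eq:wtheta-critical-blowup} only on $U_{\delta_0}$, where it holds almost everywhere. The hypothesis $w_\theta\in L^\infty_{\mathrm{loc}}(\mathbb{R}\setminus V_\theta)$ is not needed for this statement.
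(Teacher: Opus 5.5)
Your proof is correct and follows essentially the paper's route. Both arguments split $\{\operatorname{dist}(t,V_\theta)<\delta_0\}$ around the finitely many critical values and integrate the power singularity $|t-\tau_j|^{-a\beta}$, which is finite exactly when $a\beta<1$. Your separate treatment of $0<r-1<1$ (via H\"older on a set of finite measure, or directly) is more careful than the paper, which simply applies the first assertion with $a=r-1$ even though that assertion was stated only for $a\ge 1$.

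There is one bookkeeping slip. After bounding $\operatorname{dist}(t,V_\theta)^{-a\beta}\le\sum_j|t-t_j|^{-a\beta}$, each term must be integrated over all of $U_{\delta_0}$, not only over $\{|t-t_j|<\delta_0\}$. Your explicit constant instead follows by partitioning $U_{\delta_0}$ according to the nearest critical value, since on each piece $\operatorname{dist}(t,V_\theta)=|t-t_j|<\delta_0$. Finiteness is unaffected either way, because each $|t-t_j|^{-a\beta}$ is integrable on the bounded set $U_{\delta_0}$.
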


\begin{proof}
	Since $V_\theta$ is finite, write
	\[
	V_\theta=\{\tau_1,\dots,\tau_N\}.
	\]
	By decreasing $\delta_0$ if necessary, we may assume that the intervals
	\[
	I_j:=(\tau_j-\delta_0,\tau_j+\delta_0),\qquad 1\le j\le N,
	\]
	are pairwise disjoint. Then, by \eqref{eq:wtheta-critical-blowup},
	\[
	\int_{\operatorname{dist}(t,V_\theta)<\delta_0} |w_\theta(t)|^a\,\mathrm{d}t
	\le
	C_{\theta,\beta}^a
	\sum_{j=1}^N \int_{I_j} |t-\tau_j|^{-a\beta}\,\mathrm{d}t.
	\]
	Each integral on the right-hand side is finite if and only if $a\beta<1$, that is, if $a<1/\beta$. This proves the first assertion. The second follows by applying the first one with $a=r-1$.
\end{proof}

Proposition~\ref{prop:wtheta-critical-integrability} fixes the natural exponent window of the critical regime and anticipates the range that will reappear in the final recomposition.

\subsection{Critical values, profiles of \texorpdfstring{\(w_\theta\)}{w theta}, and scope of the formulation}\label{subsec:critical-values-scope}

The presence of critical values must be interpreted carefully within the architecture of the critical regime. The mere presence of a critical level \(t\in V_\theta\) does not determine by itself either the quantitative profile of the pushforward density \(w_\theta\) or the nature of the final functional output. What is decisive is the effective behavior of \(w_\theta\) in a neighborhood of \(V_\theta\).

In particular, the condition
\[
V_\theta\neq\varnothing
\]
does not by itself imply the loss of a uniform output. As the examples in Section~\ref{sec:examples-regimes} show, critical levels may occur without blow-up of \(w_\theta\), logarithmic growth may appear, or a power profile of the form
\[
w_\theta(t)\lesssim \operatorname{dist}(t,V_\theta)^{-\beta}
\]
may emerge. The abstract hypothesis \eqref{eq:wtheta-critical-blowup} therefore functions as a sufficient quantitative envelope for localized recomposition, without aiming to describe universally the entire critical phenomenology.

Equivalently, the set \(V_\theta\) localizes the geometric obstruction, while the parameter \(\beta\) quantifies its severity through the behavior of \(w_\theta\). This is the logic organizing the present section:
\[
V_\theta \leadsto w_\theta \leadsto \beta.
\]

\begin{remark}\label{rem:critical-values-atomic-pushforward}
	The formulation in terms of the density \(w_\theta\) presupposes an absolutely continuous regime for the pushforward measure associated with \(\theta\). If there is a region of positive measure on which \(\theta\) is constant and \(\nabla\theta=0\), then the pushforward may acquire an atomic part. In that case, the density formulation with respect to Lebesgue measure is no longer the natural interface, and the robust level of the analysis returns to the pushforward measure itself.
\end{remark}

The concrete geometric mechanisms that may produce these critical exponents will be illustrated later in Section~\ref{sec:examples-regimes}.

\subsection{Analytic output for \texorpdfstring{\(w_\theta\,\widetilde M_\theta\)}{w theta M theta}}\label{subsec:critical-weighted-control}

In the critical regime, the weighted formulation inherited from \eqref{eq:wtheta-times-Mtilde} no longer leads, in general, to an unweighted bound on \(\Omega\). H\"older's inequality on the fiber, recorded in \eqref{eq:holder-fiber}, shows that the natural analytic cost of this degeneration is the appearance of the composite weight
\[
w_\theta(\theta(x))^{r-1}.
\]

Within the fiber framework fixed in the preliminaries, the critical loss is translated into the following basic weighted inequality.

\begin{proposition}[Localized control of the weighted expression]\label{prop:weighted-fiber-localized}
	Assume that \(\theta\) belongs to the geometric framework of the preliminaries, so that \eqref{eq:wtheta-times-Mtilde}, \eqref{eq:holder-fiber}, and \eqref{eq:coarea} hold. Fix $1<r<\infty$ and let $E\subset\mathbb{R}$ be measurable. Then, for every $f\in L^r(\Omega)$,
	\begin{equation}\label{eq:weighted-fiber-localized}
		\|w_\theta\,\widetilde M_\theta f\|_{L^r(E)}^r
		\le
		\int_{\theta^{-1}(E)}
		|f(x)|^r\,w_\theta(\theta(x))^{r-1}\,\mathrm{d}x.
	\end{equation}
\end{proposition}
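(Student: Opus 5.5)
The plan is to work pointwise in the level variable \(t\in E\) and then undo the fiber integration with the coarea formula. Fix \(t\) with \(0<w_\theta(t)<\infty\). By \eqref{eq:wtheta-times-Mtilde} and the fiber H\"older inequality \eqref{eq:holder-fiber},
\[
|w_\theta(t)\,\widetilde M_\theta f(t)|^r
\le
w_\theta(t)^{r/r'}\,M_\theta f(t)^r
=
w_\theta(t)^{r-1}\int_{\Sigma_t}\frac{|f(x)|^r\,\mathbf 1_{\{|\nabla\theta|>0\}}(x)}{|\nabla\theta(x)|}\,\mathrm{d}\mathcal H^{n-1}(x),
\]
using \(r/r'=r-1\) and the definition \eqref{eq:Mtheta-def}. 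At levels where \(w_\theta(t)\in\{0,\infty\}\), the convention in \eqref{eq:normalized-average} makes the left side vanish, so the inequality holds trivially there.

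The key observation is that on \(\Sigma_t\) one has \(\theta(x)=t\), so the scalar factor \(w_\theta(t)^{r-1}\) can be moved inside the fiber integral as \(w_\theta(\theta(x))^{r-1}\). This gives, for a.e. \(t\),
\[
|w_\theta\widetilde M_\theta f(t)|^r\,\mathbf 1_E(t)
\le
\int_{\Sigma_t}\frac{|f(x)|^r\,w_\theta(\theta(x))^{r-1}\,\mathbf 1_E(\theta(x))\,\mathbf 1_{\{|\nabla\theta|>0\}}(x)}{|\nabla\theta(x)|}\,\mathrm{d}\mathcal H^{n-1}(x).
\]

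Next I integrate in \(t\) over \(\mathbb R\) and apply the operational coarea formula \eqref{eq:coarea-operational-form}. Since the integrand is nonnegative, this is legitimate in the extended sense via \eqref{eq:coarea}. The result is
\[
\int_\Omega |f(x)|^r\,w_\theta(\theta(x))^{r-1}\,\mathbf 1_{\theta^{-1}(E)}(x)\,\mathbf 1_{\{|\nabla\theta|>0\}}(x)\,\mathrm{d}x.
\]
Dropping the indicator of \(\{|\nabla\theta|>0\}\) only enlarges the right side, which yields \eqref{eq:weighted-fiber-localized}.

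The main obstacle is measurability and well-definedness of the composite weight \(x\mapsto w_\theta(\theta(x))\). The density \(w_\theta\) is only defined almost everywhere, so one must check that changing it on a Lebesgue-null set \(N\) does not affect the right-hand side. This holds because \(|\theta^{-1}(N)|=\nu_\theta(N)=0\) when \(\nu_\theta\ll\mathrm{d}t\). Choosing a Borel representative of \(w_\theta\) then makes the composition measurable.

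A secondary point is that the pointwise fiber identities hold only for a.e. \(t\). The null exceptional set of levels is harmless, because it is integrated against \(\mathrm{d}t\). The right-hand side may well be infinite, in which case there is nothing to prove.
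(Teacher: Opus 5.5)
Your proposal is correct and matches the paper's proof step for step: fiber H\"older via \eqref{eq:wtheta-times-Mtilde} and \eqref{eq:holder-fiber}, raising to the power $r$, inserting $w_\theta(t)^{r-1}=w_\theta(\theta(x))^{r-1}$ and $\mathbf 1_E(t)=\mathbf 1_E(\theta(x))$ on $\Sigma_t$, and then undoing the fiber integration by coarea. Your version is in fact slightly more careful than the paper's direct appeal to \eqref{eq:coarea}, which as literally stated carries the factor $|\nabla\theta|$ on the left: you route through the operational form \eqref{eq:coarea-operational-form}, then discard the indicator of $\{|\nabla\theta|>0\}$, and you also note the choice of a Borel representative.
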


\begin{proof}
	By \eqref{eq:wtheta-times-Mtilde} and \eqref{eq:holder-fiber}, for almost every $t$ one has
	\[
	|w_\theta(t)\,\widetilde M_\theta f(t)|
	\le
	w_\theta(t)^{1/r'}\,
	\left(
	\int_{\Sigma_t}\frac{|f(x)|^r}{|\nabla\theta(x)|}\,\mathrm{d}\mathcal H^{n-1}(x)
	\right)^{1/r}.
	\]
	Raising to the power $r$ and integrating over $E$, we obtain
	\[
	\|w_\theta\,\widetilde M_\theta f\|_{L^r(E)}^r
	\le
	\int_E
	w_\theta(t)^{r-1}
	\int_{\Sigma_t}\frac{|f(x)|^r}{|\nabla\theta(x)|}\,\mathrm{d}\mathcal H^{n-1}(x)\,
	\mathrm{d}t.
	\]
	Applying the coarea formula \eqref{eq:coarea} to the integrand
	\[
	x\longmapsto |f(x)|^r\,w_\theta(\theta(x))^{r-1}\,\mathbf{1}_E(\theta(x)),
	\]
	gives exactly \eqref{eq:weighted-fiber-localized}.
\end{proof}

The preceding proposition identifies the analytic cost of the critical regime: instead of an unweighted bound on \(\Omega\), the pullback weight
\[
w_\theta(\theta(x))^{r-1}
\]
appears naturally.

Under the blow-up profile \eqref{eq:wtheta-critical-blowup}, this weight can be estimated separately in a critical zone, concentrated near \(V_\theta\), and in a noncritical zone, where local uniform control is recovered.

\begin{corollary}[Localization near $V_\theta$]\label{cor:weighted-fiber-critical-localization}
	Assume that $\Omega\subset\mathbb{R}^n$ is bounded, that $\theta:\Omega\to\mathbb{R}$ is Lipschitz,
	that $V_\theta$ is finite, that $w_\theta\in L^\infty_{\mathrm{loc}}(\mathbb{R}\setminus V_\theta)$,
	and that \eqref{eq:wtheta-critical-blowup} holds for some $\beta\in[0,1)$.
	Fix $\delta\in(0,\delta_0)$ and define
	\[
	U_\delta:=\{t\in\mathbb{R}:\operatorname{dist}(t,V_\theta)<\delta\}.
	\]
	Then, for every
	\[
	1<r<1+\frac{1}{\beta}
	\]
	and every $f\in L^r(\Omega)$, one has
	\begin{equation}\label{eq:weighted-fiber-critical-local}
		\|w_\theta\,\widetilde M_\theta f\|_{L^r(U_\delta)}^r
		\le
		C_{\theta,\beta}^{\,r-1}
		\int_{\theta^{-1}(U_\delta)}
		|f(x)|^r\,\operatorname{dist}(\theta(x),V_\theta)^{-\beta(r-1)}\,\mathrm{d}x,
	\end{equation}
	and moreover there exists a constant $C_{\theta,\delta}<\infty$ such that
	\begin{equation}\label{eq:weighted-fiber-critical-off}
		\|w_\theta\,\widetilde M_\theta f\|_{L^r(\mathbb{R}\setminus U_\delta)}
		\le
		C_{\theta,\delta}\,\|f\|_{L^r(\Omega)}.
	\end{equation}
\end{corollary}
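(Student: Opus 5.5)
Both estimates follow from Proposition~\ref{prop:weighted-fiber-localized}, applied with two different choices of the measurable set $E$, and each time the pullback weight $w_\theta(\theta(x))^{r-1}$ is bounded pointwise.

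\emph{Critical zone.} Take $E=U_\delta$ in \eqref{eq:weighted-fiber-localized}. This gives
\[
\|w_\theta\,\widetilde M_\theta f\|_{L^r(U_\delta)}^r
\le
\int_{\theta^{-1}(U_\delta)} |f(x)|^r\,w_\theta(\theta(x))^{r-1}\,\mathrm{d}x.
\]
For $x\in\theta^{-1}(U_\delta)$ we have $\operatorname{dist}(\theta(x),V_\theta)<\delta<\delta_0$. Since $r-1>0$, the profile \eqref{eq:wtheta-critical-blowup} yields
\[
w_\theta(\theta(x))^{r-1}\le C_{\theta,\beta}^{\,r-1}\operatorname{dist}(\theta(x),V_\theta)^{-\beta(r-1)}.
\]
This pointwise bound is the one delicate point. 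The profile holds only for almost every $t$, so we must know that it survives composition with $\theta$ for almost every $x$. If $N\subset U_\delta$ is the null set where it fails, then $|\theta^{-1}(N)|=\nu_\theta(N)=0$, because $\nu_\theta\ll\mathrm{d}t$, which is implicit in the density framework. So the bound holds for almost every $x\in\theta^{-1}(U_\delta)$, and \eqref{eq:weighted-fiber-critical-local} follows. The restriction $r<1+1/\beta$ is not needed for the inequality itself. By Proposition~\ref{prop:wtheta-critical-integrability} it guarantees $\beta(r-1)<1$, so the weight is locally integrable in the level variable and the right-hand side is a meaningful weighted norm.

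\emph{Noncritical zone.} Take $E=\mathbb{R}\setminus U_\delta$. Since $\Omega$ is bounded and $\theta$ is Lipschitz, $\theta(\Omega)$ lies in a compact interval $J$. Hence $w_\theta=0$ almost everywhere outside $J$, since $\nu_\theta(\mathbb{R}\setminus J)=0$. On the compact set $J\setminus U_\delta\subset\mathbb{R}\setminus V_\theta$, the hypothesis $w_\theta\in L^\infty_{\mathrm{loc}}(\mathbb{R}\setminus V_\theta)$ gives
\[
M_\delta:=\|w_\theta\|_{L^\infty(J\setminus U_\delta)}<\infty.
\]
To make this precise, cover $J\setminus U_\delta$ by finitely many open sets compactly contained in $\mathbb{R}\setminus V_\theta$. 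Proposition~\ref{prop:weighted-fiber-localized} then gives
\[
\|w_\theta\,\widetilde M_\theta f\|_{L^r(\mathbb{R}\setminus U_\delta)}^r\le M_\delta^{\,r-1}\|f\|_{L^r(\Omega)}^r,
\]
again using the null-set transfer through $\theta$. This is \eqref{eq:weighted-fiber-critical-off} with $C_{\theta,\delta}=M_\delta^{1/r'}$.
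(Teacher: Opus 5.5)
Your proof is correct and follows the paper's argument: you apply Proposition~\ref{prop:weighted-fiber-localized} twice, with $E=U_\delta$ and with $E=\mathbb{R}\setminus U_\delta$, and bound the pullback weight $w_\theta(\theta(x))^{r-1}$ by the blow-up profile in the first case and by an $L^\infty$ bound on a compact set away from $V_\theta$ in the second. Your version is slightly more careful than the paper's in three places. You justify composing the a.e.\ bounds with $\theta$ via $\nu_\theta\ll\mathrm{d}t$. You use the compact set $J\setminus U_\delta$, whereas the paper uses $\theta(\Omega)\cap\{\operatorname{dist}(t,V_\theta)\ge\delta\}$, which need not be closed. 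And you record the resulting constant correctly as $M_\delta^{1/r'}$.
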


\begin{proof}
	The inequality \eqref{eq:weighted-fiber-critical-local} follows immediately from Proposition~\ref{prop:weighted-fiber-localized} and from the pointwise bound \eqref{eq:wtheta-critical-blowup}.

	For \eqref{eq:weighted-fiber-critical-off}, by the boundedness of $\Omega$ and the Lipschitz character of $\theta$, the set $\theta(\Omega)$ is contained in a compact interval. The set
	\[
	K_\delta
	:=
	\theta(\Omega)\cap\{t\in\mathbb{R}:\operatorname{dist}(t,V_\theta)\ge\delta\}
	\]
	is compact and contained in $\mathbb{R}\setminus V_\theta$. Since $w_\theta\in L^\infty_{\mathrm{loc}}(\mathbb{R}\setminus V_\theta)$, there exists $C_{\theta,\delta}<\infty$ such that
	\[
	w_\theta(t)\le C_{\theta,\delta}
	\qquad\text{for almost every }t\in K_\delta.
	\]
	Applying Proposition~\ref{prop:weighted-fiber-localized} again,
	\[
	\|w_\theta\,\widetilde M_\theta f\|_{L^r(\mathbb{R}\setminus U_\delta)}^r
	\le
	C_{\theta,\delta}^{\,r-1}\,\|f\|_{L^r(\Omega)}^r,
	\]
	which gives \eqref{eq:weighted-fiber-critical-off}.
\end{proof}

\begin{remark}\label{rem:critical-regime-scope}
	The output obtained in this section is the critical counterpart of the uniform regime analyzed in Section~\ref{sec:fiber-analytic-objective}. There, the geometric stability of the fibers and the quantitative control of \(w_\theta\) allow the recomposition to close without critical cost. Here, instead, the loss of that stability near \(V_\theta\) forces one to work with a formulation localized in the level space and with a pullback weight on the input space.

	In particular, the critical regime does not represent a failure of the transfer mechanism, but rather a second structural mode of closure. Its distinctive feature is that the recomposition no longer produces, in general, a global unweighted estimate, but instead a localized and weighted output whose scope is governed by the singular profile of \(w_\theta\) near the critical set and by the exponent window fixed above.
\end{remark}

%% file: 40_main_thm_abs-sio.tex
\section{Sparse transfer and geometric recomposition}\label{sec:main-results}

This section proves the second main result of the manuscript. The goal is to show that the one-dimensional sparse output available for the reduced form transfers to the original geometric problem and recomposes into functional consequences on \(\Omega_y\times\Omega_x\). The proof has three structural steps: exact reduction to the level variables, sparse transfer from the one-dimensional model, and geometric closure of the pushforward norms.

\subsection{From the sparse input to abstract recomposition}\label{subsec:main-assembly}

The first step is purely functional. It assumes that a transferred sparse bound for the truncated geometric form is already available and converts it into an \(L^r(\mathbb{R})\times L^{r'}(\mathbb{R})\) bound on the pushforward densities. For this reason, we record it as a preparatory lemma.

\begin{lemma}[Abstract recomposition of a sparse output]\label{lem:abstract-recomposition}
	Let \(1<r<\infty\), and let \(r'\) be the conjugate exponent. Fix \(\varepsilon>0\) and a pair of bounded, compactly supported functions
	\[
	f:\Omega_y\to\mathbb{C},
	\qquad
	g:\Omega_x\to\mathbb{C}.
	\]
	Assume that there exists a sparse family \(\mathcal S=\mathcal S_{\varepsilon,f,g}\) of intervals such that
	\begin{equation}\label{eq:main-recomposition-sparse-input}
		\bigl|\langle T_\varepsilon f,g\rangle\bigr|
		\le
		C_{\mathrm{sp}}\,\Lambda_{\mathcal S}(A_\psi f,A_\phi g).
	\end{equation}
	Assume moreover that
	\[
	A_\psi f\in L^r(\mathbb{R}),
	\qquad
	A_\phi g\in L^{r'}(\mathbb{R}).
	\]
	Then
	\begin{equation}\label{eq:main-recomposition-lr-bound}
		\bigl|\langle T_\varepsilon f,g\rangle\bigr|
		\le
		C_r\,\|A_\psi f\|_{L^r(\mathbb{R})}\,
		\|A_\phi g\|_{L^{r'}(\mathbb{R})},
	\end{equation}
	where \(C_r>0\) depends only on \(r\), on \(C_{\mathrm{sp}}\), and on the universal constant in the boundedness of sparse forms on \(L^r(\mathbb{R})\times L^{r'}(\mathbb{R})\).
\end{lemma}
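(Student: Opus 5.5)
The plan is to combine the sparse input \eqref{eq:main-recomposition-sparse-input} with Lemma~\ref{lem:sparse-form-Lp}. Two bookkeeping checks come first.

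The first check is that the sparse family \(\mathcal S=\mathcal S_{\varepsilon,f,g}\) is \(\eta\)-sparse for a fixed \(\eta\in(0,1)\). This parameter must not depend on \(\varepsilon,f,g\). That is exactly how the sparse families are produced in Theorem~\ref{thm:sparse-domination-1d}, Corollary~\ref{cor:sparse-transfer-Teps} and Corollary~\ref{cor:pushforward-structural-sparse-input}, with the finite reabsorption of Remark~\ref{rem:finite-union-sparse}. I would read the phrase ``sparse family'' in the hypothesis as carrying this fixed \(\eta\). The universal constant mentioned in the statement is then the constant of Lemma~\ref{lem:sparse-form-Lp} for that \(\eta\).

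The second check is that \(A_\psi f\) and \(A_\phi g\) are locally integrable on \(\mathbb{R}\), so that \(\Lambda_{\mathcal S}(A_\psi f,A_\phi g)\) is defined. This is immediate from \(A_\psi f\in L^r(\mathbb{R})\) and \(A_\phi g\in L^{r'}(\mathbb{R})\), since \(1<r,r'<\infty\). Alternatively, in the regime of Corollary~\ref{cor:pushforward-linfty-compact-support}, the inputs are even bounded and compactly supported.

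With these checks in place, I apply Lemma~\ref{lem:sparse-form-Lp} with \(F=A_\psi f\) and \(G=A_\phi g\), which gives
\[
\Lambda_{\mathcal S}(A_\psi f,A_\phi g)\le C_{r,\eta}\,\|A_\psi f\|_{L^r(\mathbb{R})}\,\|A_\phi g\|_{L^{r'}(\mathbb{R})}.
\]
Inserting this into \eqref{eq:main-recomposition-sparse-input} yields \eqref{eq:main-recomposition-lr-bound} with \(C_r:=C_{\mathrm{sp}}\,C_{r,\eta}\).

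If one wants the constant explicitly, one can track the proof of Lemma~\ref{lem:sparse-form-Lp}. That proof dominates the sparse form by \(\eta^{-1}\int M(|F|)\,M(|G|)\), then applies H\"older's inequality and the \(L^r\) and \(L^{r'}\) bounds for \(M\). This gives \(C_r\le C_{\mathrm{sp}}\,\eta^{-1}\,\|M\|_{L^r\to L^r}\,\|M\|_{L^{r'}\to L^{r'}}\).

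No real obstacle is expected here. The only point requiring care is the uniformity of \(\eta\): the constant must be independent of \(\varepsilon\), \(f\) and \(g\), as the statement asserts. I would make explicit that the sparseness parameter is the one fixed in Section~\ref{sec:sparse-1d}, so that \(C_r\) depends only on \(r\), \(C_{\mathrm{sp}}\) and that universal sparse-form constant.
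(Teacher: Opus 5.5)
Your proposal is correct and matches the paper's proof: both insert the \(L^r\times L^{r'}\) bound for sparse forms (Lemma~\ref{lem:sparse-form-Lp}) into the sparse hypothesis and absorb \(C_{\mathrm{sp}}C_{r,\eta}\) into \(C_r\). Your remark that the sparseness parameter \(\eta\) must be fixed independently of \(\varepsilon,f,g\) makes explicit a dependence that the paper leaves implicit in its ``universal constant.''
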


\begin{proof}
	By the boundedness of sparse forms on \(L^r(\mathbb{R})\times L^{r'}(\mathbb{R})\),
	\[
	\Lambda_{\mathcal S}(A_\psi f,A_\phi g)
	\le
	C_r'\,\|A_\psi f\|_{L^r(\mathbb{R})}\,
	\|A_\phi g\|_{L^{r'}(\mathbb{R})}.
	\]
	Substituting this inequality into \eqref{eq:main-recomposition-sparse-input}, we obtain \eqref{eq:main-recomposition-lr-bound}.
\end{proof}

\begin{lemma}[Globalization by finite covering of levels]
	\label{lem:main-uniform-globalization}
	Let \(1<r<\infty\), and let \(r'\) be the conjugate exponent. Let \(\theta:\Omega\to\mathbb{R}\), and assume that there exist open intervals
	\[
	I_1,\dots,I_N\subset\mathbb{R}
	\]
	such that
	\[
	\theta(\Omega)\subset \bigcup_{j=1}^N I_j.
	\]
	Assume moreover that:
	\begin{enumerate}
		\item for each \(j=1,\dots,N\), the local submersion and fiber-control hypotheses hold on \(I_j\);
		\item there exists a constant \(C_{\theta,\mathrm{sub}}>0\) such that
		\[
		w_\theta(t)\le C_{\theta,\mathrm{sub}}
		\qquad\text{for almost every }t\in \theta(\Omega).
		\]
	\end{enumerate}
	Then, for every \(h\in L^r(\Omega)\),
	\begin{equation}\label{eq:main-uniform-globalization}
		\|w_\theta\,\widetilde M_\theta h\|_{L^r(\mathbb{R})}
		\le
		N^{1/r}\,C_{\theta,\mathrm{sub}}^{1/r'}\,\|h\|_{L^r(\Omega)}.
	\end{equation}
\end{lemma}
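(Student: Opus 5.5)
The plan is to reduce to the local estimate of Corollary~\ref{cor:wtheta-submersion-uniform} on each interval $I_j$ and then sum, using that $w_\theta\,\widetilde M_\theta h = A_\theta h$ vanishes almost everywhere outside $\theta(\Omega)$. First I would check the support claim. If $E\subset\mathbb{R}$ is Borel with $E\cap\theta(\Omega)=\varnothing$, then $\theta^{-1}(E)=\varnothing$, so $\nu_{\theta,h}(E)=0$ and $\nu_\theta(E)=0$. Hence $A_\theta h=0$ almost everywhere off $\bigcup_j I_j$; if needed, replace $\theta(\Omega)$ by a Borel hull of full $\nu_\theta$-measure. Consequently
\[
\|w_\theta\,\widetilde M_\theta h\|_{L^r(\mathbb{R})}^r
=
\int_{\bigcup_j I_j}|A_\theta h(t)|^r\,\mathrm{d}t
\le
\sum_{j=1}^N \|A_\theta h\|_{L^r(I_j)}^r .
\]

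Next I would bound each term exactly as in the proof of Corollary~\ref{cor:wtheta-submersion-uniform}. On $I_j$ the local hypotheses give the identification $A_\theta h = w_\theta\,\widetilde M_\theta h$ and the fiber H\"older inequality \eqref{eq:holder-fiber}, namely $|A_\theta h(t)|\le w_\theta(t)^{1/r'}M_\theta h(t)$. Hypothesis~2 yields $w_\theta\le C_{\theta,\mathrm{sub}}$ almost everywhere on $I_j\cap\theta(\Omega)$, and the complement in $I_j$ contributes nothing. Therefore
\[
\|A_\theta h\|_{L^r(I_j)}\le C_{\theta,\mathrm{sub}}^{1/r'}\,\|M_\theta h\|_{L^r(\mathbb{R})}=C_{\theta,\mathrm{sub}}^{1/r'}\|h\|_{L^r(\Omega)},
\]
where the last equality is \eqref{eq:Mtheta-Lp-root}. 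Summing the $N$ identical bounds gives $\|w_\theta\widetilde M_\theta h\|_{L^r}^r\le N\,C_{\theta,\mathrm{sub}}^{r/r'}\|h\|_{L^r}^r$. Taking $r$-th roots yields \eqref{eq:main-uniform-globalization}.

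The only delicate point is bookkeeping rather than analysis. It concerns whether the constant $C_{\mathrm{sub}}(I_j)$ in the local corollary is replaced uniformly by $C_{\theta,\mathrm{sub}}$; hypothesis~2 handles this. It also concerns making sure the pointwise fiber inequality holds almost everywhere on all of $\bigcup_j I_j$, so that no level set outside the submersion tubes carries mass. This is guaranteed by the covering of $\theta(\Omega)$ together with the support argument above. Note that the overlap of the $I_j$ causes the loss $N^{1/r}$. One could avoid it by using the disjoint union directly, since $\int_{\bigcup I_j}\le\int_{\mathbb{R}}$ with a global pointwise bound. The stated inequality with $N^{1/r}\ge1$ then follows a fortiori, but I would present the summation version to match the local-to-global structure of Remark~\ref{rem:wtheta-uniform-globalization}.
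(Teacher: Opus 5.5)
Your proof is correct and follows essentially the same route as the paper: you discard the contribution outside $\theta(\Omega)$, split over the covering $I_1,\dots,I_N$, bound each piece by $C_{\theta,\mathrm{sub}}^{r-1}\|h\|_{L^r(\Omega)}^r$ via the fiber H\"older inequality and $\|M_\theta h\|_{L^r(\mathbb{R})}=\|h\|_{L^r(\Omega)}$, and sum. Your side remark is also right: hypothesis~2 gives a global pointwise bound on $w_\theta$, so the argument of Proposition~\ref{prop:pushforward-linfty-to-lp} yields the estimate without the factor $N^{1/r}$.
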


\begin{proof}
	Since \(w_\theta\,\widetilde M_\theta h=0\) outside \(\theta(\Omega)\), one has
	\[
	\|w_\theta\,\widetilde M_\theta h\|_{L^r(\mathbb{R})}^r
	=
	\int_{\theta(\Omega)}
	|w_\theta(t)\,\widetilde M_\theta h(t)|^r\,\mathrm{d}t.
	\]
	Using the covering \(\theta(\Omega)\subset \bigcup_{j=1}^N I_j\),
	\[
	\|w_\theta\,\widetilde M_\theta h\|_{L^r(\mathbb{R})}^r
	\le
	\sum_{j=1}^N
	\|w_\theta\,\widetilde M_\theta h\|_{L^r(I_j)}^r.
	\]
	On each interval \(I_j\), the uniform fiber control gives
	\[
	\|w_\theta\,\widetilde M_\theta h\|_{L^r(I_j)}^r
	\le
	C_{\theta,\mathrm{sub}}^{r-1}\,\|h\|_{L^r(\Omega)}^r.
	\]
	Therefore,
	\[
	\|w_\theta\,\widetilde M_\theta h\|_{L^r(\mathbb{R})}^r
	\le
	N\,C_{\theta,\mathrm{sub}}^{r-1}\,\|h\|_{L^r(\Omega)}^r.
	\]
	Taking \(r\)-th roots yields \eqref{eq:main-uniform-globalization}.
\end{proof}

\subsection{The transfer and recomposition theorem}\label{subsec:main-theorem}

The following result is the second main theorem. Unlike the preceding abstract lemma, it states simultaneously the reduction hypotheses, the sparse input, and the two geometric modes of closure.

\begin{theorem}[Sparse transfer and geometric recomposition]\label{thm:main-recomposition}
	Let \(1<r<\infty\), and let \(r'\) be the conjugate exponent. Fix \(\varepsilon>0\), and let
	\[
	f:\Omega_y\to\mathbb{C},
	\qquad
	g:\Omega_x\to\mathbb{C}
	\]
	be bounded, compactly supported functions.

	Assume that the synchronized truncated form satisfies the reduction identity in the Lebesgue layer
	\begin{equation}\label{eq:main-thm-reduction-assumption}
		\langle T_\varepsilon f,g\rangle
		=
		\int_{\mathbb{R}}\int_{\mathbb{R}}
		\mathbf{1}_{\{|s-t|>\varepsilon\}}\,k(s,t)\,
		A_\psi f(t)\,A_\phi g(s)\,
		\mathrm{d}t\,\mathrm{d}s.
	\end{equation}
	Assume moreover that the reduced one-dimensional form admits the sparse output
	\begin{equation}\label{eq:main-thm-sparse-assumption}
		\bigl|\langle T_\varepsilon f,g\rangle\bigr|
		\le
		C_{\mathrm{sp}}\,
		\Lambda_{\mathcal S_{\varepsilon,f,g}}(A_\psi f,A_\phi g)
	\end{equation}
	for some sparse family \(\mathcal S_{\varepsilon,f,g}\) of intervals.

	Then the following conclusions hold.

	\begin{enumerate}
		\item \emph{Abstract recomposition.}
		If
		\[
		A_\psi f\in L^r(\mathbb{R}),
		\qquad
		A_\phi g\in L^{r'}(\mathbb{R}),
		\]
		then
		\begin{equation}\label{eq:main-recomposition-theorem-bound}
			\bigl|\langle T_\varepsilon f,g\rangle\bigr|
			\le
			C_r\,\|A_\psi f\|_{L^r(\mathbb{R})}\,
			\|A_\phi g\|_{L^{r'}(\mathbb{R})}.
		\end{equation}

		\item \emph{Uniform regime.}
		Assume that there exist finite families of open intervals
		\[
		\{I_{\phi,\ell}\}_{\ell=1}^{N_\phi},
		\qquad
		\{I_{\psi,m}\}_{m=1}^{N_\psi},
		\]
		and finite geometric constants
		\[
		C_{\phi,\mathrm{sub}},\ C_{\psi,\mathrm{sub}}>0
		\]
		such that
		\[
		\phi(\Omega_x)\subset \bigcup_{\ell=1}^{N_\phi} I_{\phi,\ell},
		\qquad
		\psi(\Omega_y)\subset \bigcup_{m=1}^{N_\psi} I_{\psi,m},
		\]
		the local submersion and fiber-control hypotheses hold on every interval in both families, and
		\[
		w_\phi(t)\le C_{\phi,\mathrm{sub}}
		\quad\text{for almost every }t\in \phi(\Omega_x),
		\qquad
		w_\psi(t)\le C_{\psi,\mathrm{sub}}
		\quad\text{for almost every }t\in \psi(\Omega_y).
		\]
		Then
		\begin{equation}\label{eq:main-uniform-final-bound}
			|\langle T_\varepsilon f,g\rangle|
			\le
			C_r\,
			N_\psi^{1/r}\,N_\phi^{1/r'}\,
			C_{\psi,\mathrm{sub}}^{1/r'}\,C_{\phi,\mathrm{sub}}^{1/r}\,
			\|f\|_{L^r(\Omega_y)}\,\|g\|_{L^{r'}(\Omega_x)}.
		\end{equation}

		\item \emph{Critical regime.}
		Assume that the set of critical values \(V_\psi\) is finite, that \(w_\psi\in L^\infty_{\mathrm{loc}}(\mathbb{R}\setminus V_\psi)\), and that there exist \(\delta_{\psi,0}>0\), \(\beta_\psi\in[0,1)\), and \(C_{\psi,\beta}>0\) such that
		\begin{equation}\label{eq:main-critical-blowup-psi}
			w_\psi(t)
			\le
			C_{\psi,\beta}\,\operatorname{dist}(t,V_\psi)^{-\beta_\psi}
			\qquad
			\text{for almost every }t\text{ with }\operatorname{dist}(t,V_\psi)<\delta_{\psi,0}.
		\end{equation}
		Assume analogously that \(V_\phi\) is finite, that \(w_\phi\in L^\infty_{\mathrm{loc}}(\mathbb{R}\setminus V_\phi)\), and that there exist \(\delta_{\phi,0}>0\), \(\beta_\phi\in[0,1)\), and \(C_{\phi,\beta}>0\) such that
		\begin{equation}\label{eq:main-critical-blowup-phi}
			w_\phi(t)
			\le
			C_{\phi,\beta}\,\operatorname{dist}(t,V_\phi)^{-\beta_\phi}
			\qquad
			\text{for almost every }t\text{ with }\operatorname{dist}(t,V_\phi)<\delta_{\phi,0}.
		\end{equation}
		Fix
		\[
		0<\delta_\psi<\delta_{\psi,0},
		\qquad
		0<\delta_\phi<\delta_{\phi,0},
		\]
		and define
		\[
		U_{\psi,\delta_\psi}
		:=
		\{t\in\mathbb{R}:\operatorname{dist}(t,V_\psi)<\delta_\psi\},
		\qquad
		U_{\phi,\delta_\phi}
		:=
		\{t\in\mathbb{R}:\operatorname{dist}(t,V_\phi)<\delta_\phi\}.
		\]
		Assume finally that
		\[
		\beta_\psi(r-1)<1,
		\qquad
		\beta_\phi(r'-1)<1.
		\]
		Then
		\begin{equation}\label{eq:main-critical-localized-bound}
			|\langle T_\varepsilon f,g\rangle|
			\le
			C_r\,
			\bigl(\mathcal N_{\psi,\delta_\psi}(f)+C_{\psi,\delta_\psi}\|f\|_{L^r(\Omega_y)}\bigr)
			\bigl(\mathcal N_{\phi,\delta_\phi}(g)+C_{\phi,\delta_\phi}\|g\|_{L^{r'}(\Omega_x)}\bigr),
		\end{equation}
		where
		\begin{align}
			\mathcal N_{\psi,\delta_\psi}(f)
			&:=
			C_{\psi,\beta}^{1/r'}\,
			\left(
			\int_{\psi^{-1}(U_{\psi,\delta_\psi})}
			|f(y)|^r\,
			\operatorname{dist}(\psi(y),V_\psi)^{-\beta_\psi(r-1)}\,
			\mathrm{d}y
			\right)^{1/r},\label{eq:main-critical-localized-Npsi}\\
			\mathcal N_{\phi,\delta_\phi}(g)
			&:=
			C_{\phi,\beta}^{1/r}\,
			\left(
			\int_{\phi^{-1}(U_{\phi,\delta_\phi})}
			|g(x)|^{r'}\,
			\operatorname{dist}(\phi(x),V_\phi)^{-\beta_\phi(r'-1)}\,
			\mathrm{d}x
			\right)^{1/r'}.\label{eq:main-critical-localized-Nphi}
		\end{align}
	\end{enumerate}
\end{theorem}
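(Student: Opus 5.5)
The proof will be modular, following the three conclusions in order; every step rests on an earlier result.

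For conclusion~1, the hypotheses \eqref{eq:main-thm-sparse-assumption} together with $A_\psi f\in L^r(\mathbb{R})$ and $A_\phi g\in L^{r'}(\mathbb{R})$ are exactly those of Lemma~\ref{lem:abstract-recomposition}. That lemma, which invokes the sparse bound of Lemma~\ref{lem:sparse-form-Lp}, gives \eqref{eq:main-recomposition-theorem-bound}. The constant $C_r$ depends on $r$, on $C_{\mathrm{sp}}$ and on the sparseness parameter $\eta$. The reduction identity \eqref{eq:main-thm-reduction-assumption} is not needed for the inequality itself. It only identifies $A_\theta h$ with the expressions $w_\theta\,\widetilde M_\theta h$ used in the closure steps (Remark~\ref{rem:Q-vs-M}).

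For conclusion~2, apply Lemma~\ref{lem:main-uniform-globalization} twice.
\begin{itemize}
\item With $\theta=\psi$, $h=f$ and exponent $r$, it gives $\|A_\psi f\|_{L^r}\le N_\psi^{1/r}C_{\psi,\mathrm{sub}}^{1/r'}\|f\|_{L^r}$.
\item With $\theta=\phi$, $h=g$ and exponent $r'$, whose conjugate is $r$, it gives $\|A_\phi g\|_{L^{r'}}\le N_\phi^{1/r'}C_{\phi,\mathrm{sub}}^{1/r}\|g\|_{L^{r'}}$.
\end{itemize}
Both right-hand sides are finite because $f$ and $g$ are bounded with compact support. Hence the integrability hypotheses of conclusion~1 hold, and substituting into \eqref{eq:main-recomposition-theorem-bound} yields \eqref{eq:main-uniform-final-bound} with exactly the stated exponents. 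The only care needed is the exponent bookkeeping in the $\phi$-side application.

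For conclusion~3, split each norm by level region. For $A_\psi f$, write
\[
\|A_\psi f\|_{L^r(\mathbb{R})}\le \|A_\psi f\|_{L^r(U_{\psi,\delta_\psi})}+\|A_\psi f\|_{L^r(\mathbb{R}\setminus U_{\psi,\delta_\psi})}.
\]
The first term is controlled by \eqref{eq:weighted-fiber-critical-local} of Corollary~\ref{cor:weighted-fiber-critical-localization}. Taking $r$-th roots gives $C_{\psi,\beta}^{(r-1)/r}=C_{\psi,\beta}^{1/r'}$ times the weighted integral, which is exactly $\mathcal N_{\psi,\delta_\psi}(f)$. The second term is controlled by \eqref{eq:weighted-fiber-critical-off} and gives $C_{\psi,\delta_\psi}\|f\|_{L^r}$. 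The same argument with $\phi$, $g$ and exponent $r'$ gives $\mathcal N_{\phi,\delta_\phi}(g)$ with prefactor $C_{\phi,\beta}^{(r'-1)/r'}=C_{\phi,\beta}^{1/r}$, plus the off-critical term.

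The conditions $\beta_\psi(r-1)<1$ and $\beta_\phi(r'-1)<1$ are the exponent windows of Corollary~\ref{cor:weighted-fiber-critical-localization}, namely $r<1+1/\beta$ on each side. By Proposition~\ref{prop:wtheta-critical-integrability}, they also ensure that the pullback weights are locally integrable. For bounded compactly supported data this makes $\mathcal N_{\psi,\delta_\psi}(f)$ and $\mathcal N_{\phi,\delta_\phi}(g)$ finite, so $A_\psi f\in L^r$ and $A_\phi g\in L^{r'}$, and conclusion~1 then gives \eqref{eq:main-critical-localized-bound}.

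The main obstacle is justifying the off-critical constant. Corollary~\ref{cor:weighted-fiber-critical-localization} assumes a bounded domain and a Lipschitz phase, so that $\theta(\Omega)\setminus U_\delta$ is compact in $\mathbb{R}\setminus V_\theta$. I would either carry these assumptions implicitly as part of the geometric framework, or replace them by the fact that $\operatorname{supp} f$ is compact and $\psi$ is continuous. In the latter case only the compact set $\psi(\operatorname{supp} f)\setminus U_{\psi,\delta_\psi}$ matters, by Corollary~\ref{cor:pushforward-linfty-compact-support}, at the cost of a constant that may depend on the support. A secondary check is that $A_\theta h=w_\theta\widetilde M_\theta h$ holds almost everywhere including where $w_\theta\in\{0,\infty\}$. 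This follows because $\nu_{\theta,h}\ll\nu_\theta$: the set $\{w_\theta=0\}$ is $\nu_\theta$-null, and $\{w_\theta=\infty\}$ is Lebesgue-null.
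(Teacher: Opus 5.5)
Your three steps match the paper's proof. Conclusion 1 comes from the sparse hypothesis together with the $L^r\times L^{r'}$ bound for sparse forms. Conclusion 2 comes from Lemma~\ref{lem:main-uniform-globalization}, applied with exponent $r$ on the $\psi$ side and $r'$ on the $\phi$ side. Conclusion 3 comes from splitting at $U_{\psi,\delta_\psi}$: inside, Proposition~\ref{prop:weighted-fiber-localized} and the profile apply; outside, local boundedness of $w_\psi$ applies. Your caveat about the off-critical constant is well taken. The paper only invokes ``positive separation from $V_\psi$'', which gives a finite $C_{\psi,\delta_\psi}$ only if, say, $\psi(\Omega_y)$ is bounded, as in Corollary~\ref{cor:weighted-fiber-critical-localization}. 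Your alternative via $\psi(\operatorname{supp} f)$ works, but it makes the constant depend on $f$.

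One step, however, is false. You claim that $\beta_\psi(r-1)<1$ and $\beta_\phi(r'-1)<1$, via Proposition~\ref{prop:wtheta-critical-integrability}, make the pullback weights locally integrable, so that $\mathcal N_{\psi,\delta_\psi}(f)<\infty$ for bounded compactly supported $f$. That proposition only gives $w_\psi^{\,r-1}\in L^1_{\mathrm{loc}}(\mathrm{d}t)$. Integrating $\operatorname{dist}(\psi(y),V_\psi)^{-\beta_\psi(r-1)}$ in $\mathrm{d}y$ instead means integrating $\operatorname{dist}(t,V_\psi)^{-\beta_\psi(r-1)}$ against $\mathrm{d}\nu_\psi=w_\psi\,\mathrm{d}t$. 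The bound \eqref{eq:main-critical-blowup-psi} guarantees that this is finite only when $\beta_\psi r<1$.

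Here is a concrete failure. Take $\psi(y)=|y|^4$ on $B(0,1)\subset\mathbb{R}^2$, so $w_\psi(t)=\frac{\pi}{2}t^{-1/2}$ on $(0,1)$ and $\beta_\psi=\frac12$. Take $r=2$ and $\phi=\psi$, so both window conditions hold, and $f=\mathbf 1_{B(0,1/2)}$. The weight is then $|y|^{-2}$, which is not integrable near $0$ in $\mathbb{R}^2$, so $\mathcal N_{\psi,\delta_\psi}(f)=\infty$. Consistently, $A_\psi f=w_\psi\notin L^2$ near $t=0$.

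The repair costs nothing, and it is how the paper implicitly closes the argument: finiteness is never needed.
\begin{itemize}
\item If a factor on the right of \eqref{eq:main-critical-localized-bound} is infinite, the estimate is trivial. If the other factor vanishes, your split forces $A_\phi g=0$ (resp.\ $A_\psi f=0$) a.e., and the sparse hypothesis gives a zero left side.
\item If both factors are finite, your split itself gives $A_\psi f\in L^r$ and $A_\phi g\in L^{r'}$, and conclusion 1 applies.
\end{itemize}
In particular, the window conditions play no role in proving the inequality.
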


\begin{proof}
	We begin with the abstract part. By the reduction identity \eqref{eq:main-thm-reduction-assumption}, the truncated geometric form is written exactly as the reduced one-dimensional form applied to the densities \(A_\psi f\) and \(A_\phi g\). The sparse hypothesis \eqref{eq:main-thm-sparse-assumption} then gives
	\[
	\bigl|\langle T_\varepsilon f,g\rangle\bigr|
	\le
	C_{\mathrm{sp}}\,
	\Lambda_{\mathcal S_{\varepsilon,f,g}}(A_\psi f,A_\phi g).
	\]
	If \(A_\psi f\in L^r(\mathbb{R})\) and \(A_\phi g\in L^{r'}(\mathbb{R})\), the \(L^r\times L^{r'}\) boundedness of sparse forms implies
	\[
	\Lambda_{\mathcal S_{\varepsilon,f,g}}(A_\psi f,A_\phi g)
	\le
	C_r'\,\|A_\psi f\|_{L^r(\mathbb{R})}\,
	\|A_\phi g\|_{L^{r'}(\mathbb{R})}.
	\]
	Absorbing \(C_{\mathrm{sp}}C_r'\) into \(C_r\), we obtain \eqref{eq:main-recomposition-theorem-bound}.

	We now prove the uniform conclusion. In this regime one has the geometric identification
	\[
	A_\psi f=w_\psi\,\widetilde M_\psi f,
	\qquad
	A_\phi g=w_\phi\,\widetilde M_\phi g.
	\]
	Applying Lemma~\ref{lem:main-uniform-globalization} to \(\theta=\psi\), with exponent \(r\), gives
	\[
	\|A_\psi f\|_{L^r(\mathbb{R})}
	=
	\|w_\psi\,\widetilde M_\psi f\|_{L^r(\mathbb{R})}
	\le
	N_\psi^{1/r}\,C_{\psi,\mathrm{sub}}^{1/r'}\,
	\|f\|_{L^r(\Omega_y)}.
	\]
	Applying the same lemma to \(\theta=\phi\), with exponent \(r'\), and recalling that the conjugate exponent of \(r'\) is \(r\), gives
	\[
	\|A_\phi g\|_{L^{r'}(\mathbb{R})}
	=
	\|w_\phi\,\widetilde M_\phi g\|_{L^{r'}(\mathbb{R})}
	\le
	N_\phi^{1/r'}\,C_{\phi,\mathrm{sub}}^{1/r}\,
	\|g\|_{L^{r'}(\Omega_x)}.
	\]
	These two bounds show, in particular, that \(A_\psi f\in L^r(\mathbb{R})\) and \(A_\phi g\in L^{r'}(\mathbb{R})\). Substituting them into the abstract estimate \eqref{eq:main-recomposition-theorem-bound}, we obtain
	\[
	|\langle T_\varepsilon f,g\rangle|
	\le
	C_r\,
	N_\psi^{1/r}\,N_\phi^{1/r'}\,
	C_{\psi,\mathrm{sub}}^{1/r'}\,C_{\phi,\mathrm{sub}}^{1/r}\,
	\|f\|_{L^r(\Omega_y)}\,\|g\|_{L^{r'}(\Omega_x)},
	\]
	which is \eqref{eq:main-uniform-final-bound}.

	Finally, we prove the critical conclusion. Using again
	\[
	A_\psi f=w_\psi\,\widetilde M_\psi f,
	\qquad
	A_\phi g=w_\phi\,\widetilde M_\phi g,
	\]
	we split the norm of \(A_\psi f\) into the critical neighborhood and its complement:
	\[
	\|A_\psi f\|_{L^r(\mathbb{R})}
	\le
	\|w_\psi\,\widetilde M_\psi f\|_{L^r(U_{\psi,\delta_\psi})}
	+
	\|w_\psi\,\widetilde M_\psi f\|_{L^r(\mathbb{R}\setminus U_{\psi,\delta_\psi})}.
	\]
	On \(U_{\psi,\delta_\psi}\), the profile
	\[
	w_\psi(t)\le C_{\psi,\beta}\,\operatorname{dist}(t,V_\psi)^{-\beta_\psi}
	\]
	and H\"older's inequality on the fibers give
	\[
	\|w_\psi\,\widetilde M_\psi f\|_{L^r(U_{\psi,\delta_\psi})}
	\le
	\mathcal N_{\psi,\delta_\psi}(f).
	\]
	Outside \(U_{\psi,\delta_\psi}\), the hypothesis \(w_\psi\in L^\infty_{\mathrm{loc}}(\mathbb{R}\setminus V_\psi)\) and the positive separation from \(V_\psi\) give a finite constant \(C_{\psi,\delta_\psi}\) such that
	\[
	\|w_\psi\,\widetilde M_\psi f\|_{L^r(\mathbb{R}\setminus U_{\psi,\delta_\psi})}
	\le
	C_{\psi,\delta_\psi}\,\|f\|_{L^r(\Omega_y)}.
	\]
	Therefore,
	\[
	\|A_\psi f\|_{L^r(\mathbb{R})}
	\le
	\mathcal N_{\psi,\delta_\psi}(f)+C_{\psi,\delta_\psi}\|f\|_{L^r(\Omega_y)}.
	\]
	The same argument applied to \(\phi\), now with exponent \(r'\), gives
	\[
	\|A_\phi g\|_{L^{r'}(\mathbb{R})}
	\le
	\mathcal N_{\phi,\delta_\phi}(g)+C_{\phi,\delta_\phi}\|g\|_{L^{r'}(\Omega_x)}.
	\]
	These two estimates imply the required membership of \(A_\psi f\) and \(A_\phi g\) in the corresponding Lebesgue spaces. Substituting them into \eqref{eq:main-recomposition-theorem-bound}, we conclude \eqref{eq:main-critical-localized-bound}.
\end{proof}

\begin{corollary}[Global output in the uniform regime]
	\label{cor:main-uniform}
	Under the uniform hypotheses of item \emph{(2)} of Theorem~\ref{thm:main-recomposition}, the global estimate \eqref{eq:main-uniform-final-bound} holds.
\end{corollary}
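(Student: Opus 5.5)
The corollary is a restatement of item~(2) of Theorem~\ref{thm:main-recomposition}, so I will not reprove it. I will only check that its hypotheses let item~(2) be invoked, namely that items~(1) and~(2) are genuinely available.

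\emph{Step 1: the reduction identity.} Let $f,g$ be bounded and compactly supported. Assume $\nu_\psi\ll\mathrm{d}t$ and $\nu_\phi\ll\mathrm{d}s$; this holds on each covering interval by the submersion hypothesis, and the level sets of $\theta(\Omega)$ are covered by the finitely many intervals. Then Corollary~\ref{cor:reduction-lebesgue}, in the operator form of Remark~\ref{rem:pushforward-operator-reduction}, gives the identity \eqref{eq:main-thm-reduction-assumption}.

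\emph{Step 2: the sparse input.} The uniform bounds $w_\phi\le C_{\phi,\mathrm{sub}}$ and $w_\psi\le C_{\psi,\mathrm{sub}}$ hold a.e.\ on $\phi(\Omega_x)$ and $\psi(\Omega_y)$. The densities vanish outside these images, so $w_\phi,w_\psi\in L^\infty(\mathbb{R})$. Corollary~\ref{cor:pushforward-structural-sparse-input} then provides $A_\psi f, A_\phi g\in L^\infty_c(\mathbb{R})$ and the sparse bound \eqref{eq:main-thm-sparse-assumption}, assuming \textbf{(Hk)} for $k$. This is the one step that needs care: the corollary is stated for the uniform hypotheses alone, so \textbf{(Hk)} must be read as part of the standing framework of the section.

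\emph{Step 3: conclusion.} Item~(2) is now applicable. Lemma~\ref{lem:main-uniform-globalization} is used with exponent $r$ for $\psi$ and exponent $r'$ for $\phi$; this shows $A_\psi f\in L^r$ and $A_\phi g\in L^{r'}$. Item~(1) then yields \eqref{eq:main-uniform-final-bound}, with constant $C_r N_\psi^{1/r}N_\phi^{1/r'}C_{\psi,\mathrm{sub}}^{1/r'}C_{\phi,\mathrm{sub}}^{1/r}$.

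The bound is uniform in $\varepsilon$, since $C_{\mathrm{sp}}$ comes from Theorem~\ref{thm:sparse-domination-1d}. No limiting argument is needed, because the statement concerns only bounded compactly supported data.
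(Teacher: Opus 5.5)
Your proposal is correct and matches the paper: its proof simply says the corollary is exactly conclusion (2) of Theorem~\ref{thm:main-recomposition}, which is obtained via Lemma~\ref{lem:main-uniform-globalization} with exponents $r$ and $r'$ followed by the abstract bound (1), just as in your Step~3. Your Steps~1--2 go beyond the paper, which takes the reduction identity \eqref{eq:main-thm-reduction-assumption} and the sparse output \eqref{eq:main-thm-sparse-assumption} as standing hypotheses of the theorem; deriving them from Corollary~\ref{cor:reduction-lebesgue} and Corollary~\ref{cor:pushforward-structural-sparse-input}, assuming \textbf{(Hk)} and the synchronized kernel structure, is valid but optional.
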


\begin{proof}
	This is exactly the uniform conclusion of Theorem~\ref{thm:main-recomposition}.
\end{proof}

\begin{corollary}[Localized recomposition in the critical regime]\label{cor:main-critical-localized}
	Under the critical hypotheses of item \emph{(3)} of Theorem~\ref{thm:main-recomposition}, the localized estimate \eqref{eq:main-critical-localized-bound} holds.
\end{corollary}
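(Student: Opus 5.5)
This corollary is a direct restatement of item (3) of Theorem~\ref{thm:main-recomposition}, so the plan is to invoke that item and check that its hypotheses are in force. Since everything is already established there, I would nonetheless organize the argument so that each ingredient is traceable. There are three steps: the abstract recomposition, the splitting in the level variable, and the control of the two pieces.

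First, the reduction identity \eqref{eq:main-thm-reduction-assumption} and the sparse output \eqref{eq:main-thm-sparse-assumption} are standing hypotheses of Theorem~\ref{thm:main-recomposition}. Lemma~\ref{lem:abstract-recomposition} then reduces everything to bounding $\|A_\psi f\|_{L^r(\mathbb{R})}$ and $\|A_\phi g\|_{L^{r'}(\mathbb{R})}$. Using the identification $A_\theta h=w_\theta\,\widetilde M_\theta h$ (Remark~\ref{rem:Q-vs-M}), I split each norm by the triangle inequality. The two pieces are the critical neighbourhood $U_{\psi,\delta_\psi}$, respectively $U_{\phi,\delta_\phi}$, and its complement.

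Second, on the critical neighbourhood I apply Corollary~\ref{cor:weighted-fiber-critical-localization}, whose estimate \eqref{eq:weighted-fiber-critical-local} comes from Proposition~\ref{prop:weighted-fiber-localized} together with the blow-up profile. The condition $\beta_\psi(r-1)<1$ places $r$ in the window $1<r<1+1/\beta_\psi$ required there, and likewise $\beta_\phi(r'-1)<1$ for $\phi$ with exponent $r'$. Taking $r$-th roots produces the factor $C_{\psi,\beta}^{(r-1)/r}=C_{\psi,\beta}^{1/r'}$, which matches \eqref{eq:main-critical-localized-Npsi}. For $\phi$ with exponent $r'$ one gets $C_{\phi,\beta}^{(r'-1)/r'}=C_{\phi,\beta}^{1/r}$, which matches \eqref{eq:main-critical-localized-Nphi}.

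Third, off the critical neighbourhood I use \eqref{eq:weighted-fiber-critical-off}, which gives the constants $C_{\psi,\delta_\psi}$ and $C_{\phi,\delta_\phi}$. Multiplying the two resulting bounds and inserting them into \eqref{eq:main-recomposition-theorem-bound} yields \eqref{eq:main-critical-localized-bound}.

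The only point needing care is the off-critical step. Corollary~\ref{cor:weighted-fiber-critical-localization} uses boundedness of the domain and the Lipschitz character of the phase to make $\theta(\Omega)\setminus U_\delta$ compact. Here I would work on the compact sets $\psi(\operatorname{supp} f)$ and $\phi(\operatorname{supp} g)$ instead, using that $f,g$ have compact support and that the phases are continuous, exactly as in Corollary~\ref{cor:pushforward-linfty-compact-support}. Local boundedness of $w_\theta$ away from $V_\theta$ then gives a finite bound on these compact sets. With that observation the corollary is immediate from item (3).
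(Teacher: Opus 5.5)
Your proposal is correct and follows the paper's argument. The paper proves this corollary simply by citing item (3), and its proof of item (3) is exactly your splitting of $\|A_\psi f\|_{L^r(\mathbb{R})}$ and $\|A_\phi g\|_{L^{r'}(\mathbb{R})}$ over $U_{\psi,\delta_\psi}$, $U_{\phi,\delta_\phi}$ and their complements, fed into the abstract recomposition. Your exponent bookkeeping ($C_{\psi,\beta}^{(r-1)/r}=C_{\psi,\beta}^{1/r'}$ and $C_{\phi,\beta}^{(r'-1)/r'}=C_{\phi,\beta}^{1/r}$) is right.

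The one step you handle differently is the off-critical one, and there you are more careful than the paper. The paper obtains a finite $C_{\psi,\delta_\psi}$ from $w_\psi\in L^\infty_{\mathrm{loc}}(\mathbb{R}\setminus V_\psi)$ and separation from $V_\psi$ alone. That silently needs $\psi(\Omega_y)\setminus U_{\psi,\delta_\psi}$ to be bounded, which item (3) does not assume; Corollary~\ref{cor:weighted-fiber-critical-localization} gets this from a bounded domain and a Lipschitz phase. Your use of the compact set $\psi(\operatorname{supp} f)\setminus U_{\psi,\delta_\psi}$ works. You transfer the a.e.\ bound on $w_\psi$ to $w_\psi\circ\psi$ via $\nu_\psi\ll\mathrm{d}t$, and the integral in Proposition~\ref{prop:weighted-fiber-localized} only sees $x\in\operatorname{supp} f$.

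The price is that $C_{\psi,\delta_\psi}$ then depends on $\operatorname{supp} f$, and $C_{\phi,\delta_\phi}$ on $\operatorname{supp} g$. So your bound is uniform only over inputs supported in a fixed compact set, not over all inputs as the notation suggests. You should say this explicitly. If you want constants depending only on the phase and $\delta$, add the boundedness hypothesis instead.

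For the critical-zone piece, cite Proposition~\ref{prop:weighted-fiber-localized} together with the blow-up profile directly, rather than the corollary. The estimate there needs nothing more, and this avoids importing the corollary's bounded-domain and Lipschitz hypotheses.
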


\begin{proof}
	This is exactly the critical conclusion of Theorem~\ref{thm:main-recomposition}.
\end{proof}

\begin{remark}[Constants, localization, and critical window]
	\label{rem:main-uniform-constant-bookkeeping}\label{rem:main-critical-localized-scope}
	In the global uniform inequality \eqref{eq:main-uniform-final-bound}, the constant \(C_r\) contains the one-dimensional analytic dependence, while the geometry of the recomposition is recorded by
	\[
	N_\psi^{1/r}\,N_\phi^{1/r'}\,C_{\psi,\mathrm{sub}}^{1/r'}\,C_{\phi,\mathrm{sub}}^{1/r}.
	\]
	This factor measures the structural cost of the local-to-global passage by finite covering. In the critical regime, the estimate \eqref{eq:main-critical-localized-bound} replaces that global closure with localized norms carrying pullback weights determined by the blow-up profiles of \(w_\psi\) and \(w_\phi\). The conditions
	\[
	\beta_\psi(r-1)<1,
	\qquad
	\beta_\phi(r'-1)<1
	\]
	are equivalent to the window
	\[
	1+\beta_\phi<r<1+\frac{1}{\beta_\psi},
	\]
	with endpoints excluded.
\end{remark}

\begin{remark}[Scope of the recomposition]\label{rem:main-pv-maximal-status}
	The observations concerning maximal truncation and principal values are treated separately in the complementary capsule of Section~\ref{sec:pv-maximal-capsule}. The functional closure of the two regimes recomposed in this section is formulated for the truncated family \(T_\varepsilon\), which is the main object of Theorem~\ref{thm:main-recomposition}.
\end{remark}

%% file: 50_examples_abs-sio.tex
\section{Examples and geometric regimes}\label{sec:examples-regimes}

In this section we collect fully explicit models illustrating the two geometric outputs of Theorem~\ref{thm:main-recomposition}: the uniform output of Corollary~\ref{cor:main-uniform} and the localized output of Corollary~\ref{cor:main-critical-localized}. In particular, the examples separate three layers that should not be confused: the presence of critical values, the geometric uniformity of the fibers, and the possibility of a global functional consequence.

\subsection{Laboratory convention}\label{subsec:examples-setup}

Unless explicitly stated otherwise, we work in the laboratory setting
\[
\Omega=B(0,R)\subset\mathbb{R}^n,
\]
and take as one-dimensional kernel the Hilbert kernel
\begin{equation*}\label{eq:examples-hilbert-kernel}
k(s,t)=\frac{1}{\pi}\,\frac{1}{s-t}.
\end{equation*}
This kernel belongs to the one-dimensional analytic package fixed in Subsection~\ref{subsec:kernel-1d}, so that the following examples explicitly realize the abstract assembly of Section~\ref{sec:main-results}.

\subsection{Uniform regime: linear projection on the ball}\label{subsec:examples-uniform-linear}

Let
\[
\theta(x)=x_1.
\]
Then $\nabla\theta\equiv e_1$, so there are no critical points and the fibers
\[
\Sigma_t=\{x\in\mathbb{R}^n:\ x_1=t\}
\]
are parallel hyperplanes.
The intersection with the ball is an $(n-1)$-dimensional disk of radius $\sqrt{R^2-t^2}$ for $|t|<R$; hence the pushforward density is explicit:
\begin{equation*}\label{eq:w-linear-ball}
w_\theta(t)
=
\mathcal{H}^{n-1}(\Sigma_t\cap B(0,R))
=
\omega_{n-1}(R^2-t^2)^{\frac{n-1}{2}}\mathbf{1}_{\{|t|\le R\}}.
\end{equation*}

In particular,
\[
w_\theta\in L^\infty(\mathbb{R}),
\]
and therefore the hypothesis $w_\theta\in L^\infty(\mathbb{R})$ of Proposition~\ref{prop:pushforward-linfty-to-lp} is immediately verified.
Moreover, this example satisfies the prototypical geometry of the uniform regime:
the submersion is global, the fibers admit a natural quantitative trivialization, and,
away from the extreme levels $t=\pm R$, the contact with the boundary is transverse.

Indeed, if $n(x)=x/R$ denotes the outward normal to $\partial B(0,R)$, then for
$x\in \Sigma_t\cap\partial\Omega$ one has
\begin{equation*}\label{eq:angle-linear-ball}
|\nabla_{\partial\Omega}\theta(x)|
=
\sqrt{1-\Bigl(\frac{t}{R}\Bigr)^2},
\end{equation*}
so that, in interior tubes $|t|\le (1-\delta)R$, the transversality is quantitatively separated from zero.

Consequently, this model falls directly within the scope of Corollary~\ref{cor:wtheta-submersion-uniform} and activates the uniform output of Corollary~\ref{cor:main-uniform}. It is the canonical model of global recomposition without geometric loss.

\subsection{Critical values and profiles of \texorpdfstring{$w_\theta$}{w theta}}\label{subsec:examples-critical-profiles}

The next two examples separate two different possibilities within the critical regime. The first shows that critical values may occur without blow-up of the pushforward density. The second shows that, when a singular profile does occur, it need not be of power type.

\subsubsection{Critical value without blow-up: the quadratic radial model}\label{subsubsec:examples-critical-radial}

Let now
\[
\theta(x)=|x|^2,
\qquad x\in B(0,R)\subset\mathbb{R}^n.
\]
Then
\[
\nabla\theta(x)=2x,
\]
so a critical point appears at $x=0$ and, consequently, a critical value appears at $t=0$.
The levels are the spheres
\[
\Sigma_t=\{x:\ |x|^2=t\},
\qquad 0<t\le R^2,
\]
and the pushforward density is computed explicitly by coarea:
\begin{equation*}\label{eq:w-radial-ball}
w_\theta(t)
=
\int_{\Sigma_t\cap B(0,R)}\frac{1}{|\nabla\theta|}\,\mathrm{d}\mathcal{H}^{n-1}
=
\frac{\sigma_{n-1}}{2}\,t^{\frac{n-2}{2}}\,\mathbf{1}_{\{0<t\le R^2\}},
\end{equation*}
where $\sigma_{n-1}=\mathcal{H}^{n-1}(S^{n-1}_1)$.

This example is important precisely because it separates two phenomena that should not be confused:
the presence of critical values and the blow-up of the pushforward density.
Indeed, for $n\ge 2$ there is no blow-up of $w_\theta$ at $t=0$:
if $n=2$, $w_\theta$ is constant near $0$;
if $n>2$, one even has
\[
w_\theta(t)\to 0
\qquad \text{as } t\downarrow 0.
\]
In particular, for $n\ge 2$ one also obtains
\[
w_\theta\in L^\infty(\mathbb{R}),
\]
so the abstract criterion of Proposition~\ref{prop:pushforward-linfty-to-lp}
is not excluded by the mere presence of the critical value.

This example separates the presence of a critical value from the blow-up of the pushforward density. For \(n\ge 2\), \(w_\theta\) remains bounded near \(t=0\), so the mere existence of a critical value does not by itself exclude a global functional consequence. What is lost is the purely uniform reading of the regime.

\subsubsection{Critical value with logarithmic blow-up: the flat saddle point}\label{subsubsec:examples-critical-saddle}

Consider now the two-dimensional model
\[
\Omega=B(0,1)\subset\mathbb{R}^2
\]
and the quadratic phase
\[
\theta(x,y)=x^2-y^2.
\]
Then
\[
\nabla\theta(x,y)=(2x,-2y),
\]
so the origin is the unique critical point and therefore
\[
V_\theta=\{0\}.
\]
For $t\neq 0$, the fibers
\[
\Sigma_t=\{(x,y)\in\mathbb{R}^2:\ x^2-y^2=t\}
\]
are hyperbolas; the critical level $\Sigma_0$ degenerates into the pair of lines $y=\pm x$.

The pushforward density can be computed explicitly by coarea.
By symmetry, it is enough to consider the case $0<t<1$ and multiply by four the contribution from the first quadrant.
There the fiber is parametrized as
\[
x=\sqrt{t+y^2},
\qquad
0\le y\le \sqrt{\frac{1-t}{2}},
\]
since the condition $x^2+y^2\le 1$ is equivalent to $t+2y^2\le 1$.
Moreover,
\[
|\nabla\theta(x,y)|=2\sqrt{x^2+y^2}=2\sqrt{t+2y^2},
\]
and the length element along the curve is
\[
\mathrm{d}\mathcal H^1
=
\sqrt{1+\Bigl(\frac{\mathrm{d}x}{\mathrm{d}y}\Bigr)^2}\,\mathrm{d}y
=
\frac{\sqrt{t+2y^2}}{\sqrt{t+y^2}}\,\mathrm{d}y.
\]
Therefore,
\begin{align}
w_\theta(t)
&=
\int_{\Sigma_t\cap B(0,1)}\frac{1}{|\nabla\theta|}\,\mathrm{d}\mathcal H^1 \notag\\
&=
4\int_0^{\sqrt{(1-t)/2}}
\frac{1}{2\sqrt{t+2y^2}}\,
\frac{\sqrt{t+2y^2}}{\sqrt{t+y^2}}
\,\mathrm{d}y \notag\\
&=
2\int_0^{\sqrt{(1-t)/2}}\frac{1}{\sqrt{t+y^2}}\,\mathrm{d}y \notag\\
&=
2\,\operatorname{arcsinh}\!\left(\sqrt{\frac{1-t}{2t}}\right).\notag
\label{eq:w-saddle-ball}
\end{align}
By symmetry, the same expression holds for $t<0$ with $t$ replaced by $|t|$, and $w_\theta(t)=0$ for $|t|\ge 1$.

In particular, as $t\to 0$,
\begin{equation*}\label{eq:w-saddle-log}
w_\theta(t)
\sim
\log\frac{1}{|t|}.
\end{equation*}
Thus this example exhibits a genuine critical value and an effective loss of geometric uniformity, but the blow-up of the pushforward density is only logarithmic.
Consequently,
\[
w_\theta\in L^a_{\mathrm{loc}}(\mathbb{R})
\qquad\text{for every }1\le a<\infty,
\]
although
\[
w_\theta\notin L^\infty_{\mathrm{loc}}(\mathbb{R}).
\]

This example complements the previous radial model: there may be critical values without blow-up there, while here blow-up appears without a strong loss of integrability. In both cases, the profile of \(w_\theta\), the geometric uniformity of the fibers, and the functional output belong to distinct layers of the analysis.

\subsection{Loss of uniformity without critical points: oscillation and boundary}\label{subsec:examples-loss-uniformity}

Not every loss of geometric uniformity comes from critical values. A different mechanism appears when the fibers remain smooth and submersive, but their geometry ceases to be quantitatively uniform.

An elementary example is
\begin{equation*}\label{eq:theta-oscillatory}
\theta(x)=x_1+a\sin(Nx_2),
\qquad x\in\mathbb{R}^n.
\end{equation*}
In this case
\[
\nabla\theta(x)=(1,aN\cos(Nx_2),0,\dots,0),
\]
and therefore
\[
|\nabla\theta(x)|\ge 1
\qquad \text{for every }x,
\]
so there are no critical points and the submersion persists globally.

However, as $N$ grows, the fibers
\[
\Sigma_t=\{x_1=t-a\sin(Nx_2)\}
\]
exhibit fine-scale oscillation, large curvature, and increasing complexity in their intersection with the domain and with the boundary. In families with \(N\to\infty\), the quantitative constants associated with the trivialization of the tube and with the contact with the boundary may degenerate.

This example has a diagnostic role: it shows that the absence of critical points does not by itself imply quantitative uniformity of the fibers.

\subsubsection{Tangential contact with the boundary}\label{subsubsec:examples-boundary-contact}

An additional mechanism of geometric degeneration appears when the fibers remain regular in the interior but lose transversality when they intersect the boundary of the domain. This phenomenon is distinct from the appearance of interior critical values and must be analyzed separately.

Consider, in the plane, the domain
\[
\Omega=B(0,R)\subset\mathbb{R}^2,
\]
and the linear function
\begin{equation*}\label{eq:theta-boundary-flat}
\theta(x)=x_1.
\end{equation*}
There is no interior degeneration, since
\[
\nabla\theta\equiv (1,0)\neq 0.
\]
However, as one approaches the extreme levels $t=\pm R$, the fiber
\[
\Sigma_t=\{x\in\mathbb{R}^2:\ x_1=t\}
\]
becomes tangent to the boundary $\partial B(0,R)$.
Indeed, the length of the transverse section is
\begin{equation*}\label{eq:w-boundary-flat}
w_\theta(t)
=
2\sqrt{R^2-t^2}\,\mathbf{1}_{\{|t|\le R\}},
\end{equation*}
which collapses as $|t|\uparrow R$.

Equivalently, if $n(x)=x/R$ denotes the outward normal to $\partial\Omega$, then for
$x\in\Sigma_t\cap\partial\Omega$ one has
\[
|\nabla_{\partial\Omega}\theta(x)|
=
\sqrt{1-\Bigl(\frac{t}{R}\Bigr)^2},
\]
so the transversality constant tends to zero precisely as one approaches the extreme levels.

This example shows that even a completely regular global submersion in the interior may lose uniformity
through a purely boundary mechanism.
In particular, the geometric hypotheses of the uniform regime must control not only the interior non-degeneracy, but also the quantitative contact between the fibers and the boundary of the domain, as already occurs in the interior tubes of the linear example in Subsection~\ref{subsec:examples-uniform-linear}.

\subsection{Exact radial laboratory: explicit reduction and critical threshold}\label{subsec:examples-radial-laboratory}

In this subsection we collect the full radial development. In this family, the abstract machinery of reduction and recomposition becomes completely explicit.

This makes it possible to see, within a single family, how the pushforward density arises, how the critical exponent \(\beta\) appears, and how the functional window of the critical regime narrows.

Let $\Omega_x=\Omega_y=B_R(0)\subset \mathbb{R}^n$, and assume that the phases are radial,
\[
\phi(x)=\Phi(|x|),\qquad \psi(y)=\Psi(|y|),
\]
with $\Phi,\Psi:[0,R]\to \mathbb{R}$ continuous, strictly monotone on $(0,R)$, and such that $\Phi,\Psi\in C^1((0,R])$.
Assume moreover that the inputs are radial,
\[
g(x)=G(|x|),\qquad f(y)=F(|y|).
\]

In what follows, \(s\) and \(t\) range over the images of the phases, and the identities are understood at regular values.

\begin{proposition}[Explicit reduction in the radial regime]\label{prop:radial-full-reduction}
Under the preceding hypotheses, if
\[
r_\phi(s):=\Phi^{-1}(s),\qquad r_\psi(t):=\Psi^{-1}(t),
\]
then the basic pushforward densities are given by
\[
w_\phi(s)=\omega_{n-1}\frac{r_\phi(s)^{n-1}}{|\Phi'(r_\phi(s))|},
\qquad
w_\psi(t)=\omega_{n-1}\frac{r_\psi(t)^{n-1}}{|\Psi'(r_\psi(t))|},
\]
and the densities weighted by radial data satisfy
\[
w_{\phi,g}(s)=w_\phi(s)G(r_\phi(s)),
\qquad
w_{\psi,f}(t)=w_\psi(t)F(r_\psi(t)).
\]
Consequently, the truncated form reduces exactly to
\[
\langle T_\varepsilon f,g\rangle =
\iint_{\mathbb{R}^2}
\mathbf{1}_{\{|s-t|>\varepsilon\}}
k(s,t)
w_\psi(t)F(r_\psi(t))
w_\phi(s)G(r_\phi(s))
\mathrm{d}t\,\mathrm{d}s.
\]
\end{proposition}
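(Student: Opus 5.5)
The plan is to compute the weighted pushforward measures directly, using polar coordinates rather than the general coarea machinery. Then I will feed the resulting densities into the Lebesgue form of Theorem~\ref{thm:exact-pushforward-reduction}, that is, Corollary~\ref{cor:reduction-lebesgue}. It suffices to treat $\phi$; the computation for $\psi$ is identical.

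\emph{Step 1: polar coordinates.} For a bounded Borel set $E\subset\mathbb{R}$ and radial $g(x)=G(|x|)$, I will write
\[
\nu_{\phi,g}(E)=\int_{B_R}\mathbf 1_E(\Phi(|x|))\,G(|x|)\,\mathrm{d}x=\omega_{n-1}\int_0^R \mathbf 1_E(\Phi(\varrho))\,G(\varrho)\,\varrho^{n-1}\,\mathrm{d}\varrho,
\]
where $\omega_{n-1}$ denotes the surface measure of the unit sphere, consistent with the normalization in the statement. The case $g\equiv 1$ gives $\nu_\phi$.

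\emph{Step 2: change of variables.} Since $\Phi$ is continuous, strictly monotone on $(0,R)$, and $C^1$ on $(0,R]$, I will substitute $s=\Phi(\varrho)$, $\varrho=r_\phi(s)$, on each compact subinterval $[a,R]\subset(0,R]$ and then let $a\downarrow0$ by monotone convergence. This gives $\mathrm{d}\varrho=\mathrm{d}s/|\Phi'(r_\phi(s))|$ wherever $\Phi'(r_\phi(s))\neq0$.

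The main obstacle is the set of critical radii $Z=\{\varrho\in(0,R):\Phi'(\varrho)=0\}$. By monotonicity, $Z$ contains no interval on which $\Phi$ is constant. Moreover, by the one-dimensional Sard lemma for $C^1$ maps, $\Phi(Z)$ is Lebesgue-null. So the change of variables holds on the complement of a null set of levels, provided I check that $\mathbf 1_{\Phi^{-1}(\cdot)}$ carries no mass on $Z$ in $s$. For this I will use the area formula $\int_{(0,R)}h(\Phi(\varrho))|\Phi'(\varrho)|\,\mathrm{d}\varrho=\int h(s)\,\mathrm{d}s$ for the injective $\Phi$, applied to $h(s)=\mathbf 1_E(s)G(r_\phi(s))r_\phi(s)^{n-1}/|\Phi'(r_\phi(s))|$. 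The difficulty is that the points $\varrho\in Z$ are lost on the left-hand side. This is exactly why the statement restricts to regular values, and why absolute continuity $\nu_\phi\ll\mathrm{d}s$ should be read on the regular part. Concretely, I will either assume $|Z|=0$, which is implicit in ``identities understood at regular values'', or note that $Z$ of positive measure would create a singular part of $\nu_\phi$. The second option is the situation excluded in Remark~\ref{rem:critical-values-atomic-pushforward}.

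\emph{Step 3: identification and reduction.} Granting Step 2, both $\nu_\phi$ and $\nu_{\phi,g}$ have Lebesgue densities, with
\[
w_\phi(s)=\omega_{n-1}\frac{r_\phi(s)^{n-1}}{|\Phi'(r_\phi(s))|},\qquad w_{\phi,g}(s)=w_\phi(s)\,G(r_\phi(s)),
\]
and the same formulas hold for $\psi$ and $f$. In particular, $Q_{\phi,g}=G\circ r_\phi$ $\nu_\phi$-a.e., in agreement with Remark~\ref{rem:Q-vs-M}. Finally, for $f,g$ bounded with compact support, the hypotheses of Corollary~\ref{cor:reduction-lebesgue} hold, since $k$ is assumed to satisfy (Hk1) in the laboratory. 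Substituting the densities into \eqref{eq:theorem-A-lebesgue} gives the displayed reduced form.
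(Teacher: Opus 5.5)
Your argument is correct, but it takes a different route from the paper. The paper works directly in fiber language. For a regular value $s$ the fiber is the sphere $\{|x|=r_\phi(s)\}$, on which both $|\nabla\phi|=|\Phi'(r_\phi(s))|$ and $g=G(r_\phi(s))$ are constant. The coarea representations \eqref{eq:wtheta-coarea} and \eqref{eq:wtheta-rho-def} then give $w_\phi$ and $w_{\phi,g}$ at once, and Corollary~\ref{cor:reduction-lebesgue} finishes. You instead compute $\nu_\phi$ and $\nu_{\phi,g}$ from scratch, via polar coordinates and the one-dimensional area formula for the injective map $\Phi$ on $[a,R]$, and then let $a\downarrow 0$.

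The paper's version is shorter and stays inside the fiber formalism of the rest of the manuscript. However, it tacitly assumes the absolute continuity $\nu_\phi\ll\mathrm{d}s$, $\nu_\psi\ll\mathrm{d}t$ that Corollary~\ref{cor:reduction-lebesgue} demands, because the coarea density only registers the set $\{|\nabla\phi|>0\}$. Your computation proves that absolute continuity and isolates the precise missing condition: the set $Z$ of critical radii must be Lebesgue-null.

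This is a genuine point, not pedantry, because the stated hypotheses permit $|Z|>0$. Take $\Phi'(\varrho)=\operatorname{dist}(\varrho,C)$ with $C\subset(0,R)$ closed, nowhere dense and of positive measure. Then $\nu_\phi$ has a singular part carried by the null set $\Phi(C)$. For $g=\mathbf 1_{\{|x|\in C\}}$ the displayed formula returns $0$, while $\langle T_\varepsilon f,g\rangle$ need not vanish. So reading ``understood at regular values'' as $|Z|=0$ is exactly the repair the statement needs.

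One small fix: for complex or signed $G$, take the limit $a\downarrow0$ by dominated convergence, with majorant $|G(\varrho)|\varrho^{n-1}\in L^1(0,R)$, rather than by monotone convergence.
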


\begin{proof}
For each regular value $s$ of $\phi$, the fiber $\Sigma_s^\phi=\{x\in \Omega_x:\phi(x)=s\}$ is the sphere $\{|x|=r_\phi(s)\}$, and on it $|\nabla \phi(x)|=|\Phi'(r_\phi(s))|$ is constant. By the coarea formula,
\[
w_\phi(s) =
\int_{\Sigma_s^\phi}\frac{1}{|\nabla \phi(x)|}\,\mathrm{d}\mathcal H^{n-1}(x) =
\frac{\mathcal H^{n-1}(\{|x|=r_\phi(s)\})}{|\Phi'(r_\phi(s))|} =
\omega_{n-1}\frac{r_\phi(s)^{n-1}}{|\Phi'(r_\phi(s))|}.
\]
Similarly,
\[
w_\psi(t)=\omega_{n-1}\frac{r_\psi(t)^{n-1}}{|\Psi'(r_\psi(t))|}.
\]
Since $g$ is radial, $g$ is constant on each sphere $\Sigma_s^\phi$, so
\[
w_{\phi,g}(s) =
\int_{\Sigma_s^\phi} g(x)\frac{1}{|\nabla \phi(x)|}\,\mathrm{d}\mathcal H^{n-1}(x) =
G(r_\phi(s))
\int_{\Sigma_s^\phi}\frac{1}{|\nabla \phi(x)|}\,\mathrm{d}\mathcal H^{n-1}(x) =
w_\phi(s)G(r_\phi(s)).
\]
Analogously, $w_{\psi,f}(t)=w_\psi(t)F(r_\psi(t))$. The final identity then follows by substituting these expressions into the reduced Lebesgue formulation of Corollary~\ref{cor:reduction-lebesgue}.
\end{proof}

In particular, if $\phi=\psi=\theta$, then $r_\phi=r_\psi=r_\theta$ and $w_\phi=w_\psi=w_\theta$, and the reduced form takes the symmetric form
\[
\langle T_\varepsilon f,g\rangle =
\iint_{\mathbb{R}^2}
\mathbf{1}_{\{|s-t|>\varepsilon\}}
k(s,t)
w_\theta(t)F(r_\theta(t))
w_\theta(s)G(r_\theta(s))
\mathrm{d}t\,\mathrm{d}s.
\]

Likewise, if $h(x)=H(|x|)$ is radial, the fiber operator collapses to
\[
A_\theta h(t) =
\int_{\Sigma_t^\theta} h(x)\frac{1}{|\nabla \theta(x)|}\,\mathrm{d}\mathcal H^{n-1}(x) =
w_\theta(t)H(r_\theta(t)).
\]

This shows that, in the full radial regime, the geometric recomposition has no implicit content left: it reduces to multiplication by the pushforward density and composition with the radial inverse of the phase.

\paragraph{Toy model: pure polynomial flattening.}

Take now
\[
\phi(x)=|x|^\gamma,
\qquad \gamma>1,
\]
in $B_R(0)$.
Then
\[
\Phi(r)=r^\gamma,
\qquad
r_\phi(s)=s^{1/\gamma},
\qquad
|\Phi'(r)|=\gamma r^{\gamma-1}.
\]
Evaluating the gradient on the fiber $\Sigma_s^\phi=\{|x|=s^{1/\gamma}\}$,
\[
|\Phi'(r_\phi(s))| =
\gamma (s^{1/\gamma})^{\gamma-1} =
\gamma s^{1-\frac{1}{\gamma}}.
\]
Substituting into the explicit formula for $w_\phi$,
\[
w_\phi(s) =
\omega_{n-1}\frac{(s^{1/\gamma})^{n-1}}{\gamma s^{1-\frac{1}{\gamma}}} =
\frac{\omega_{n-1}}{\gamma}
s^{\frac{n-1}{\gamma}-\left(1-\frac{1}{\gamma}\right)} =
\frac{\omega_{n-1}}{\gamma}
s^{\frac{n-\gamma}{\gamma}}.
\]
Therefore,
\begin{equation*}\label{eq:radial-toy-weight}
w_\phi(s)=\frac{\omega_{n-1}}{\gamma}s^{\frac{n-\gamma}{\gamma}}.
\end{equation*}

This formula displays an exact geometric competition between the dimensional collapse of the sphere and the flattening of the phase at the origin.
In particular, three regimes appear.

\begin{itemize}
\item If $\gamma<n$, then
\[
\frac{n-\gamma}{\gamma}>0,
\]
and therefore
\[
w_\phi(s)\to 0
\qquad \text{as } s\to 0.
\]
In this regime, the radial phase introduces no critical obstruction from the geometric side: the collapse of the fibers dominates the vanishing of the gradient.

\item If $\gamma=n$, then
\[
w_\phi(s)\equiv \frac{\omega_{n-1}}{n}
\]
near the origin. This is the balanced case, where dimensional collapse and flattening cancel exactly.

\item If $\gamma>n$, then
\[
\frac{n-\gamma}{\gamma}<0,
\]
and $w_\phi$ exhibits a genuine blow-up at $s=0$. More precisely,
\[
w_\phi(s)\sim s^{-\beta_\phi},
\qquad
\beta_\phi=\frac{\gamma-n}{\gamma}=1-\frac{n}{\gamma}\in (0,1).
\]
This toy model realizes exactly the critical profile predicted by the abstract theory.
\end{itemize}

\begin{remark}[Critical threshold and narrowing of the functional window]\label{rem:radial-critical-window}
Assume now that both phases are equal and have the same radial degeneration,
\[
\phi(x)=|x|^\gamma,
\qquad
\psi(y)=|y|^\gamma,
\qquad
\gamma>n.
\]
Then
\[
\beta_\phi=\beta_\psi=\beta=1-\frac{n}{\gamma}.
\]
Substituting this exponent into the hypotheses of Corollary~\ref{cor:main-critical-localized}, one obtains the functional window
\[
1+\beta<r<1+\frac{1}{\beta},
\]
that is,
\[
2-\frac{n}{\gamma}
<
r
<
1+\frac{\gamma}{\gamma-n}
=
\frac{2\gamma-n}{\gamma-n}.
\]
For each $\gamma<\infty$, this window remains open and nonempty. However, as $\gamma\to\infty$, one has $\beta\to 1^{-}$ and therefore
\[
1+\beta\to 2,
\qquad
1+\frac{1}{\beta}\to 2.
\]
Consequently, the window of admissible exponents narrows asymptotically toward the Hilbert exponent \(r=2\). The functional restriction of the critical regime thus emerges from the competition between dimensional collapse of the fibers and extreme flattening of the phase.
\end{remark}

\begin{remark}[Scope of the examples]\label{rem:examples-scope}
	Taken together, these examples show that the presence of critical values, the geometric uniformity of the fibers, and the form of the final functional output belong to distinct layers of the analysis. The profile of the pushforward density \(w_\theta\) near \(V_\theta\) may remain bounded, display a weak logarithmic blow-up, or exhibit an explicit power law in degenerate radial models.
\end{remark}